\documentclass{amsart}
\usepackage{color}
\usepackage[utf8]{inputenc}
\usepackage{amscd}
\usepackage{pstricks}
\usepackage{amsmath}
\usepackage[english]{babel}
\usepackage{amsfonts}
\usepackage{graphicx}
\usepackage{vmargin}
\usepackage{theoremref}
\usepackage[hidelinks]{hyperref}
\usepackage{amssymb}
\usepackage{tikz}
\usepackage{relsize}
\usepackage[toc,page]{appendix}
\usepackage{commath}
\usepackage{mathrsfs}
\usetikzlibrary{matrix}
\usepackage{cancel}
\usepackage{lineno}
\usepackage{comment}
\usepackage[colorinlistoftodos,prependcaption,textsize=tiny]{todonotes}

\newcommand{\rank}{{\mathrm{rank}}}

\newcommand{\Id}{\mathrm{Id}}
\newcommand{\inv}{\mathrm{inv}}
\newcommand{\ninv}{\mathfrak{inv}}
\newcommand{\e}{\mathfrak{e}}

\newcommand{\stab}{\mathrm{stab}}
\newcommand{\im}{\mathrm{im}}
\newcommand{\rimes}{\!\times\! }
\renewcommand{\P}{\mathrm{P}}

\newtheorem{theorem}{Theorem}[section]
\newtheorem{lemma}[theorem]{Lemma}
\newtheorem{conjecture}[theorem]{Conjecture}
\newtheorem{definition}[theorem]{Definition}
\newtheorem{proposition}[theorem]{Proposition}
\newtheorem{corollary}[theorem]{Corollary}

\newtheorem{example}[theorem]{Example}

\newtheorem{remark}[theorem]{Remark}

\usepackage{chngcntr}
\counterwithin{figure}{section}

\numberwithin{equation}{section}

\title[]{Generalized nonautonomous dynamics through groupoid morphisms}
\author[]{N\'estor Jara \& Emir Molina}

\address{Universidad de Chile, Departamento de Matem\'aticas. Casilla 653, Santiago, Chile}

\email{nestor.jara@ug.uchile.cl; emir.molina@ug.uchile.cl}
\subjclass[2020]{37C85, 37B55, 18B40, 37C60.}
\keywords{Nonautonomous dynamics, Dynamical systems, Groupoids}
\thanks{This research has been partially supported by ANID, Beca de Doctorado Nacional 21220105.}
\date{\today}

\begin{document}

\maketitle

\begin{abstract}
We extend the notions of nonautonomous dynamics to arbitrary groups, through groupoid morphisms. This also presents a generalization of classic dynamical systems and group actions. We introduce the structure of cotranslations, as a specific kind of groupoid morphism, and establish a correspondence between cotranslations and skew-products. We give applications of cotranslations to nonautonomous equations, both in differences and differential. Our results delve into the differentiability of cotranslations, along with dimension invariance and diagonalization, utilizing a generalized notion of kinematic similarity. 
\end{abstract}

\section{Introduction}
The theory of dynamical systems is currently one of the most active areas of research in mathematics, with autonomous dynamics being the primary focus. In contrast, the field of nonautonomous dynamics has made substantial advancements in the past two decades, particularly in the study of time-dependent differential and difference equations (e.g., \cite{Potzsche2}, \cite{BV}, \cite{Cheban}, \cite{Johnson}, \cite{Kloeden}, \cite{Potzsche1}, \cite{Rasmussen}). There is a significant disparity between the previously described concepts, while autonomous dynamics depend only on the time elapsed from the initial time $t_{0}$, the nonautonomous systems are also dependent on the initial time $t_{0}$ itself, which has several consequences to characterize limiting objects. As it was pointed out in \cite[Ch.2]{Kloeden}, the dynamics arising from nonautonomous differentiable system
\begin{equation}
\label{sistema2}
\dot{x}=f(t,x)
\end{equation}
can be formally described by two approaches where the above mentioned $t_{0}$ plays a key role: the skew-product semiflows and the process formalism, also known as the two parameter-$(t,t_0)$-semigroups.

We emphasize that an autonomous differential equation does indeed define a continuous group action, {\it i.e.} a classic dynamical system, but as soon as we consider a nonautonomous differential equation as (\ref{sistema2}), these structures no longer describe properly the dynamics given by the flow of solutions.

\subsection{Preliminaries and setting}Consider a category $\mathscr{C}$ (topological spaces, Banach spaces, rooted tree graphs, among others) and an element $X$ on said category. We also consider
\begin{itemize}
    \item $\mathbb{B}_X$ the collection of all the morphisms of the category $\mathscr{C}$ of $X$ on itself,
    \item $\mathbb{A}_X$ the collection of all invertible elements of $\mathbb{B}_X$ (whose inverse is also in $\mathbb{B}_X$). We know that $\mathbb{A}_X$ is a group.
\end{itemize}

Depending on the category $\mathscr{C}$, both sets $\mathbb{B}_X$ and $\mathbb{A}_X$ may have more structure, but for now we keep it general.

\smallskip

A dynamical system, regardless of the category $\mathscr{C}$, involves a group $G$ (possibly with additional structure) and a group morphism $\gamma: G \to \mathbb{A}_X$. In the topological case, where $X$ is a topological space and $G$ is a topological group, we equip $\mathbb{A}_X$ with the compact-open topology \cite[Definition I, p. 301]{Bourbaki}. In this case the group morphism $\gamma$ is required to be continuous.

\smallskip

Equivalently, we may define that a left (topological) dynamical system is a triple $(X,G,\alpha)$, where $X$ and $G$ satisfy the same conditions as before and $\alpha$ is a continuous left action of $G$ on $X$, {\it i.e.} a map $\alpha:G\rimes X\to X$ verifying that each $\alpha(g,\cdot)$ is a homeomorphism of $X$ on itself and
$$\alpha(g,\alpha(h,x))=\alpha(gh,x), \qquad \forall\,g,h\in G,\, x\in X.$$

It is easy to see that by setting $\hat{\alpha}:G\to \mathbb{A}_X$ by $\left[\hat{\alpha}(g)\right](x)=\alpha(g,x)$ we obtain a continuous group morphism. On the other hand, we can define right actions as maps $\beta:X\times G\to X$ verifying that each $\beta(\cdot,g)$ is a homeomorphism of $X$ on itself and
$$\beta(\beta(x,g),h)=\beta(x,gh), \qquad \forall\,g,h\in G,\, x\in X.$$

Once more, by setting $\hat{\beta}:G\to \mathbb{A}_X$ by $\left[\hat{\beta}(g)\right](x)=\beta(x,g^{-1})$ we obtain a continuous group morphism. Therefore, both left and right actions describe dynamical systems and the use of one over the other is just a matter of convenience in notation. This correspondence between left or right actions and their algebraic counterpart on group morphisms is a well known fact at the basis of classic dynamics. Nevertheless, nonautonomous dynamics \cite{Kloeden} do not have a widely discussed analogous notion of {\it action}, even less what could be its algebraic counterpart; therefore, the main objective of this work is to shed light on how to fill this lack.  Through this, we will also generalize nonautonomous dynamics to arbitrary groups.

\subsection{Novelty and Structure} The principal contribution of this article is to introduce an algebraic generalization of dynamical systems. This generalization is applicable to describe flows arising from nonautonomous equations, encompassing both differential and difference equations. Notably, it extends the notion of nonautonomous dynamics beyond the realms of specific number systems such as $\mathbb{R}$ or $\mathbb{Z}$, bringing it into the realm of general groups.

\smallskip

The paper is organized as follows. In the second section we study
the structure of {\it skew-product}, objects which, to the best of our knowledge, emerge for the first time on 1950 by H. Anzai \cite{Anzai}, whom uses them to describe a certain ergodic dynamic, and then later on 1965 were connected to differential equations thanks to the work of R. K. Miller \cite{Miller}. This concept refers to a generalization of dynamical systems given by left actions, but instead of considering one action, it uses a family of action-like functions with a certain compatibility relation (we give more details later). The skew-products we present here are defined for general groups, not just the usual $\mathbb{R}$ or $\mathbb{Z}$.

\smallskip

On the third section we present groupoids and groupoid morphisms. We define a specific type of these groupoid morphisms called cotranslations, which will give an algebraic structure of the dynamics that can be represented by skew-products. We also state some results regarding the relation of these groupoid morphisms to  discrete nonautonomous dynamics. We present several examples of this construction for different groups.

\smallskip
On the fourth section we study differentiable groupoid morphisms, give some basic properties and give an application to the problem of existence of solutions to linear nonautonomous differential equations on Banach spaces.

\smallskip
On the final section we study a partial notion of the groupoid morphisms we presented earlier, mainly on the Euclidean space, and give some of their algebraic and topological properties, obtaining a generalized notion of kinematic similarity.

\section{Skew-product dynamical systems}
In this section we present skew-products dynamical systems. Although we present them in the topological case, a similar structure can be defined and studied for objects on different categories. As described by R. J Sacker \cite{Sacker2}, a  (topological) skew-product is constituted from a topological space $X$ and a topological group $G$, both will be fixed unless stated otherwise. For simplicity, we always consider locally compact Hausdorff topologies on $X$ and $G$. In this context, $\mathbb{A}_X$ is the group of all the homeomorphisms of $X$ on itself, given the compact-open topology. The main characteristic of this generalization of dynamical systems is that instead of a unique group action, we consider a family of action-like maps. It is worth noting that in \cite{Elaydi,Sacker,Sacker2} the groups are $\mathbb{R}$ or $\mathbb{Z}$, while here we present the construction in general.

\smallskip

For locally compact Hausdorff spaces $A$ and $B$, we set $\mathcal{C}\big(A;B\big)$ the space of continuous functions from $A$ to $B$, given the compact-open topology. Let us consider the evaluation maps:
\begin{itemize}
    \item $\mathfrak{e}_G:\mathcal{C}\big(G; X\big)\times G\to X$, $(\varphi,g)\mapsto \varphi(g)$,
    \item $\mathfrak{e}_X:\mathcal{C}\big(X; X\big)\times G\to X$, $(\varphi,x)\mapsto \varphi(x)$,
\end{itemize}
as well as the partial evaluation maps:
\begin{itemize}
    \item $\widetilde{\mathfrak{e}}_G:\mathcal{C}\big(G\!\times\!X; X\big)\times G\to \mathcal{C}\big(X; X\big)$, $(\psi,g)\mapsto \psi(g,\cdot)$,
    \item $\widetilde{\mathfrak{e}}_X:\mathcal{C}\big(G\!\times\!X; X\big)\times X\to \mathcal{C}\big(G; X\big)$, $(\psi,x)\mapsto \psi(\cdot,x),$
\end{itemize}
which are all continuous \cite[Corollary I and II, p. 303]{Bourbaki}. For a subspace $Y\subset \mathcal{C}\big(G\!\times\!X; X\big)$, define 
$$Y_X:=\widetilde{e}_X(Y\rimes X)\subset \mathcal{C}\big(G; X\big)\quad \text{and}\quad Y_G:=\widetilde{e}_G(Y\rimes G)\subset \mathcal{C}\big(X; X\big).$$

\begin{definition}
    We say that a space of functions $Y\subset \mathcal{C}\big(G\!\times\!X; X\big)$ is \textbf{admissible}, if:
    \begin{itemize}
        \item [i)] $Y_G\subset\mathbb{A}_X$
        \item [ii)] $\psi(e,x)=x$ for every $\psi\in Y$ and $x\in X$, where $e$ is the unit of $G$.
    \end{itemize}
    
    In \cite{Sacker}, such a space $Y$ is called a \textbf{Hull}.
\end{definition}

On the other hand, if we consider the right action $\theta$ of $G$ on itself by translations, {\it i.e.}
$$\theta:G\times G\to G,\quad (g,h)\mapsto gh,$$
it lifts to a continuous left action $\Theta:G\!\times\!\mathcal{C}\big(G; X\big)\to \mathcal{C}\big(G; X\big)$ given by
$$\left[\Theta(h,\varphi)\right](g)=\varphi\left(\theta(g,h)\right)=\varphi(gh),\quad \forall\,\varphi\in \mathcal{C}\big(G; X\big),\,g,h\in G.$$
 
For an admissible $Y$, the set $Y_X$ is contained on $\mathcal{C}\big(G;X\big)$. If we consider its saturation $\widetilde{Y}_X:=\Theta\big(G\!\times\!Y_X\big)$, it is invariant under the action $\Theta$, hence $\left(\widetilde{Y}_X,G,\Theta\right)$ is a dynamical system.

\smallskip
With this system, we write the map $\widetilde{\Theta}:X\rimes Y\rimes G\rimes G\to X$ by 
\begin{equation}\label{192}
    \widetilde{\Theta}(x,\psi,g,h)=\left[\Theta\left(h,\widetilde{\e}_X(\psi,x)\right)\right](g)=\psi(gh,x).
\end{equation}

Now consider a dynamical system $(Y,G,\sigma)$, given by a continuous left action $\sigma:G\times Y\to Y$. The {\it skew-flow} associated to it is the map $\pi:X\times Y\times G\to X\times Y$ given by 
$$\pi(x,\psi,h)=\left(\psi(h,x),\sigma(h,\psi)\right),\quad\forall\,\psi\in Y,\,h\in G,\,x\in X,$$
which is how skew-products are depicted on \cite{Sacker}. With this system, we define the map $\widetilde{\pi}:X\rimes Y\rimes G\rimes G\to X$ by
\begin{equation}\label{193}
\widetilde{\pi}(x,\psi,g,h)=\left[\sigma\big(h,\psi\big)\right]\left(g,\psi\big(h,x\big)\right),\quad\forall\,\psi\in Y,\,g,h\in G,\,x\in X.
\end{equation}

The key to skew-products is that the systems given by $\Theta$ and $\sigma$ must be compatible. This means that the maps $\widetilde{\Theta}$ and $\widetilde{\pi}$, on (\ref{192}) and (\ref{193}) respectively, must coincide, {\it i.e.}
$$\left[\sigma(h,\psi)\right]\left(g,\psi(h,x)\right)=\psi(gh,x) ,\quad\forall\,\psi\in Y,\,g,h\in G,\,x\in X.$$

In this last statement, it is clear that all the information regarding the skew-product is contained on the properties of $\sigma$. We formalize this discussion on the following definition.
\begin{definition}\label{300}
    A \textbf{skew-product dynamical system} is a quadruple $(X,G,Y,\sigma)$, where:
    \begin{itemize}
    \item [i)] $Y\subset \mathcal{C}\big(G\!\times\!X;X\big)$ is admissible,
        \item [ii)] $(Y,G,\sigma)$ is a dynamical system, where $\sigma:G\!\times\! Y\to Y$ is a continuous left action,
        \item [iii)] $\left[\sigma(h,\psi)\right]\left(g,\psi(h,x)\right)=\psi(gh,x)$ for every $\psi\in Y,\,g,h\in G$ and $\,x\in X$.
    \end{itemize}
\end{definition}

\begin{remark}
   {\rm  It is clear that if $(X,G,\alpha)$ is a dynamical system, where $\alpha$ is a continuous left action, then $(X,G,\{\alpha\},\mathfrak{q})$ is a skew-product, where $\mathfrak{q}:G\rimes\{\alpha\}\to \{\alpha\}$ is the trivial action. In other words, a dynamical system is a skew-product where the hull contains only one function.}
\end{remark}

Let us illustrate this construction on an example.

\begin{example}
{\rm
Consider the topological space $\mathbb{R}^d$ and its group of homeomorphisms $\mathbb{A}_X$. Set $G=\mathbb{Z}$. Consider the following nonautonomous difference equation 
    \begin{equation}\label{121}
        x(n+1)=F\left(n,x(n)\right),
    \end{equation} 
    where for every $n\in \mathbb{Z}$ we have $F(n,\cdot)\in \mathbb{A}_X$. Let $n\mapsto x(n,m,\xi)$ be the unique solution of \eqref{121} that verifies $x(m,m,\xi)=\xi$. Define the collection $Y=\left\{\psi_{m}\,:\,m\in \mathbb{Z}\right\}$, where $\psi_m:\mathbb{Z}\rimes \mathbb{R}^d\to \mathbb{R}^d$ is given by
    $$\psi_{m}(n,\xi)=x\big(n+m,m,\xi\big).$$ 
    
    Set now the action $\sigma:Y\!\times\! \mathbb{Z}\to Y$ by
    $$\sigma(n,\psi_{m})=\psi_{n+m},\quad\forall\,n,m\in \mathbb{Z},$$ 
    or, evaluating
    \begin{align*}
    \left[\sigma\left(\psi_{m},n\right)\right](p,\xi)=\psi_{n+m}(p,\xi)=x\big(p+n+m,n+m,\xi\big),\quad\forall\,p,n,m\in \mathbb{Z},\,\xi\in \mathbb{R}^d,
    \end{align*}
    and its associated skew-flow $\pi:\mathbb{R}^d\!\times\!Y\!\times\!\mathbb{Z}\to \mathbb{R}^d\!\times\!Y$ given by $\pi\left(\xi,\psi_{m},n\right)=\left(\psi_{m}(n,\xi),\psi_{n+m}\right)$. Evaluating for $p,n,m\in \mathbb{Z}$ and $\xi\in \mathbb{R}^d$ we have
\begin{align*}
        \widetilde{\pi}(\xi,\psi_m,p,n)&=\psi_{n+m}\left(p,\psi_{m}\big(n,\xi\big)\right)\\
        &=x\left(p+n+m,n+m,x\big(n+m,m,\xi\big)\right).
    \end{align*}

On the other hand, in this context
\begin{align*}
    \widetilde{\Theta}(\xi,\psi_m,p,n)=\psi_{m}(p+n,\xi)=x\big(p+n+m,m,\xi\big),\quad\forall\,p,n,m\in \mathbb{Z},\,\xi\in \mathbb{R}^d.
\end{align*}

Now, by uniqueness of solutions we have
$$x\left(p+n+m,n+m,x\big(n+m,m,\xi\big)\right)=x\big(p+n+m,m,\xi\big), \quad\forall\,p,n,m\in \mathbb{Z},\,\xi\in \mathbb{R}^d,$$
thus $\widetilde{\Theta}$ and $\widetilde{\pi}$ coincide, which implies that $\left(\mathbb{R}^d,\mathbb{Z},\{\psi_m:m\in \mathbb{Z}\},\sigma\right)$ is indeed a skew-product.   }    
\end{example}

In \cite{Elaydi}, S. Elaydi and R. J. Sacker explored further applications of skew-products to the theory of nonautonomous difference equations, as the search of asymptotically stable solutions for Beverton-Holt equations \cite{Cushing}. Moreover, in \cite{Sacker}, the authors leverage this structure to develop the exponential dichotomy spectrum for nonautonomous linear differential equations when the hull is compact.

\section{Groupoid morphisms and cotranslations}

On this section, we introduce the structure of cotranslations, a specific kind of groupoid morphisms. Cotranslations serve as an algebraic counterpart to skew-products, providing a generalization of group morphisms. As group morphisms are instrumental in describing dynamical systems, this generalization further extends the applicability of such algebraic concepts.

\begin{definition} \cite[Definition 1.2]{Williams}
    We say that a set $\Xi$, doted of a subset $\Xi^{(2)}\subset \Xi\rimes \Xi$ (called the collection of composible pairs) and two maps $\bullet:\Xi^{(2)}\to \Xi$ given by $(\eta,\xi)\mapsto\eta\bullet\xi$ (called composition law), and $\ninv:\Xi\to \Xi$, is a \textbf{groupoid} if the following conditions are verified
    \begin{itemize}
        \item [i)] (associativity) If $(\eta,\xi),\,(\xi, \zeta)\in \Xi^{(2)}$, then  $(\eta\bullet\xi,\zeta),\,(\eta,\xi\bullet\zeta)\in \Xi^{(2)}$ and $(\eta\bullet\xi)\bullet\zeta=\eta\bullet(\xi\bullet\zeta)$,
        \item [ii)] (involution) $\ninv(\ninv(\eta))=\eta$ for every $\eta\in \Xi$,
        \item [iii)] (identity) for every $\eta\in \Xi$, we have $\left(\eta,\ninv(\eta)\right)\in\Xi^{(2)}$ and $(\eta,\xi)\in\Xi^{(2)}$ implies that $\ninv(\eta)\bullet(\eta\bullet\xi)=\xi$ and $(\eta\bullet\xi)\bullet\ninv(\xi)=\eta$.
    \end{itemize}
    
    Furthermore, we define $\Xi^{(0)}:=\left\{\eta\in \Xi:\eta=\ninv(\eta)=\eta\bullet\eta\right\}$ and call it the \textbf{units space} of the groupoid.
\end{definition}

A trivial example of a groupoid is a group $G$. In this case $G^{(2)}=G^2$ and $G^{(0)}=\{e\}$. 

\begin{example}
{\rm
If a group $G$ acts by the left on a set $M$, the product $M\rimes G$ has groupoid structure. Indeed, setting
$$\left(M\rimes G\right)^{(2)}:=\left\{\left((x,g),(y,h)\right)\in \left(G\rimes M\right)^{2}:x=h\cdot y\right\},$$
and
$$\bullet\left((x,g),(y,h)\right)=(y,gh),\quad\ninv(x,g)=( g\cdot x,g^{-1}),$$
the groupoid axioms are easily followed. In this case, the units space corresponds to the collection of points of the form $(x,e)$, where $e$ is the group unit.

\smallskip
This example is particularly useful when $G$ acts on itself by left translations, in which case we call $G\rimes G$ the \textbf{left translations groupoid} for $G$. 
}
\end{example}

\begin{definition}  \cite[Definition 1.8]{Williams}
    If $\Xi$ and $\Upsilon$ are groupoids, a \textbf{groupoid morphism} is a map $\vartheta:\Xi\to \Upsilon$ verifying that $(\eta,\xi)\in \Xi^{(2)}$ implies $\left(\vartheta(\eta),\vartheta(\xi)\right)\in \Upsilon^{(2)}$ and $\vartheta(\eta\bullet\xi)=\vartheta(\eta)\bullet\vartheta(\xi)$.
\end{definition}

Two basic facts about groupoid morphisms are that they preserve involutions and units spaces. In other words, for a groupoid morphism $\vartheta:\Xi\to\Upsilon$ we have $\ninv(\vartheta(\xi))=\vartheta(\ninv(\xi)$ for all $\xi\in \Xi$ and $\vartheta(\eta)\in \Upsilon^{(0)}$ for all $\eta\in \Xi^{(0)}$.

\begin{example}\label{159}
{\rm    Consider an element $X$ on some category $\mathscr{C}$. Set a group $G$,  $\gamma:G\to \mathbb{A}_X$ a group morphism and give $G\rimes G$ the left translations groupoid structure. By setting $Z:G\rimes G\to \mathbb{A}_X$ by $Z(g,h)=\gamma(h)$ we obtain a groupoid morphism. Indeed:
$$Z(g,kh)=\gamma(kh)=\gamma(k)\gamma(h)=Z(hg,k)Z(g,h).$$
}
\end{example}

This example shows that every dynamical system is in particular given by a groupoid morphism, since a dynamical system is always given by a group morphism $\gamma:G\to \mathbb{A}_X$. Nevertheless, the next example shows that groupoid morphisms describe a more general kind of dynamics.

\begin{example}\label{191}
    {\rm Consider the left translations groupoid structure on $\mathbb{R}\rimes\mathbb{R}$. Take $X=\mathbb{R}^d$ as a topological space, thus $\mathbb{A}_X$ is the group of all of its homeomorphisms. Consider a nonautonomous real differential equation $\dot{x}=F(t,x)$ such that for every $\xi\in \mathbb{R}^d$ and every $r\in \mathbb{R}$, there is a unique and globally defined solution $x_{r,\xi}:\mathbb{R}\to\mathbb{R}^d$ such that  $x_{r,\xi}(r)=\xi$. Then the map $Z:\mathbb{R}\rimes\mathbb{R}\to \mathbb{A}_X$ given by $\left[Z(r,t)\right](\xi)=x_{r,\xi}(t+r)$ is a groupoid morphism. Indeed:
    $$\left[Z(r,t+s)\right](\xi)=x_{r,\xi}(t+s+r)=x_{s+r,x_{r,\xi}(s+r)}(t+s+r)=\left[Z(s+r,t)\circ Z(r,s)\right](\xi),$$
where the second equality is a well known fact deduced from the 
uniqueness of solutions. Moreover, giving $\mathbb{A}_X$ the compact-open topology and the groupoid $\mathbb{R}\rimes \mathbb{R}$ the product topology, $Z$ turns out to be continuous.
    
    }
\end{example}

Given the dynamical relevance shown by the previous examples, we give these morphisms a distinctive name.

\begin{definition}
    Consider a group $G$ and an object $X$ on a category. A \textbf{cotranslation} is a groupoid morphism $Z:G\rimes G\to \mathbb{A}_X$, where $G\rimes G$ has the left translations groupoid structure.
    \end{definition}

Now we present the main theorem of this work. Although we state it for the topological case, it can be generalized for objects and skew-products on different categories.

\begin{theorem}\label{325}
    Let $X$ be a locally compact Hausdorff topological space and $G$ a locally compact Hausdorff topological group. Give all function spaces, including $\mathbb{A}_X$, the compact-open topology.
    
    \smallskip
    There is a bijective correspondence between skew-product dynamical systems $(X,G,Y,\sigma)$, where the action $\sigma$ is transitive, and continuous cotranslations $Z:G\rimes G\to \mathbb{A}_X$.
\end{theorem}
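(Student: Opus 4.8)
The plan is to construct explicit maps in both directions and prove they are mutually inverse, with the groupoid-morphism identity of a cotranslation playing exactly the role of the skew-product compatibility condition (iii) of Definition \ref{300}. First I would unwind the left-translations groupoid structure on $G\rimes G$: taking the composable pair $\big((hr,g),(r,h)\big)$, whose composite is $(r,gh)$, the morphism condition $Z(\eta\bullet\xi)=Z(\eta)\bullet Z(\xi)$ becomes
\[
Z(r,gh)=Z(hr,g)\circ Z(r,h),\qquad\forall\,r,g,h\in G,
\]
so a map $Z:G\rimes G\to\mathbb{A}_X$ is a cotranslation precisely when this holds. Moreover $Z(r,e)=\Id$ for all $r$ is automatic, since groupoid morphisms send the unit space $\{(x,e):x\in G\}$ of $G\rimes G$ to the unit $\Id$ of $\mathbb{A}_X$.

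For the direction cotranslation $\to$ skew-product, I would define $\psi_r\in\mathcal{C}\big(G\rimes X;X\big)$ by $\psi_r(t,x)=[Z(r,t)](x)$, set $Y:=\{\psi_r:r\in G\}$ and $\sigma(h,\psi_r):=\psi_{hr}$. Admissibility is quick: $\psi_r(g,\cdot)=Z(r,g)\in\mathbb{A}_X$ gives $Y_G\subset\mathbb{A}_X$, and $\psi_r(e,\cdot)=Z(r,e)=\Id$ gives condition (ii). The one genuine algebraic lemma is well-definedness of $\sigma$: rearranging the cotranslation identity yields $Z(hr,g)=Z(r,gh)\circ Z(r,h)^{-1}$, so $Z(hr,\cdot)$ is determined by $Z(r,\cdot)$; hence $\psi_r=\psi_{r'}$ forces $\psi_{hr}=\psi_{hr'}$. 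That $\sigma$ is a left action ($\psi_{h_1h_2r}$ and $\psi_{er}=\psi_r$) and transitive (take $h=r'r^{-1}$) is then immediate, and compatibility (iii) is exactly the rewriting $\psi_r(gh,x)=[Z(hr,g)](\psi_r(h,x))=[\sigma(h,\psi_r)](g,\psi_r(h,x))$, which is equivalent to the displayed cocycle identity.

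For the reverse map I would use transitivity to fix a base point $\psi_0\in Y$ and set $[Z(r,t)](x):=[\sigma(r,\psi_0)](t,x)$. Admissibility (i) puts $Z(r,t)$ in $\mathbb{A}_X$, admissibility (ii) gives $Z(r,e)=\Id$, and applying (iii) to $\psi:=\sigma(r,\psi_0)$ together with $\sigma\big(h,\sigma(r,\psi_0)\big)=\sigma(hr,\psi_0)$ reproduces the cotranslation identity. To conclude the bijection I would verify the two round trips: the forward construction supplies the canonical base point $\psi_e$ (the function attached to $r=e$), and reconstructing $Z$ from $(X,G,Y,\sigma)$ with this base point returns $Z$, while conversely the reconstructed hull $\{\sigma(r,\psi_0):r\in G\}$ equals $Y$ precisely because $\sigma$ is transitive. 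The delicate point here—the one I would treat most carefully—is the dependence on the base point: a different choice $\psi_0'=\sigma(c,\psi_0)$ produces the base-shifted cotranslation $Z'(r,t)=Z(rc,t)$, which is again a cotranslation and yields the \emph{same} quadruple $(X,G,Y,\sigma)$. Thus the correspondence must be understood relative to this marking (equivalently, up to the induced equivalence identifying base-shifts), and transitivity is exactly what guarantees any two admissible markings are related in this way.

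I expect the main obstacle to be continuity, i.e.\ matching the compact-open topologies on both sides. In the forward direction, continuity of $Z$ and of evaluation (continuous by \cite[Cor.~I–II, p.~303]{Bourbaki} via local compactness of $X$) make $(r,t,x)\mapsto[Z(r,t)](x)$ continuous; currying through the exponential law—valid for locally compact Hausdorff spaces—then gives continuity of $r\mapsto\psi_r$ into $\mathcal{C}\big(G\rimes X;X\big)$, and to descend $\sigma$ to a continuous map on $G\rimes Y$ I would invoke that the product of a quotient map with the identity of the locally compact space $G$ is again a quotient map. In the reverse direction the same tools, now applied to the partial evaluation $\widetilde{\e}_G$ via $Z(r,t)=\widetilde{\e}_G\big(\sigma(r,\psi_0),t\big)$, yield continuity of $Z$ from continuity of $\sigma$; the point to watch is whether the compact-open topology makes inversion on $\mathbb{A}_X$ continuous, which is where the local compactness and Hausdorff hypotheses are doing their work. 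Assembling these continuity statements with the algebraic bijection above completes the proof.
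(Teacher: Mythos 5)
Your proposal is correct and follows, in substance, the same route as the paper: both directions use the same explicit constructions ($\psi_r(t,x)=[Z(r,t)](x)$ with hull $Y=\{\psi_r\}$ and $\sigma(h,\psi_r)=\psi_{hr}$ one way; $Z$ recovered by partial evaluation of the hull the other way), and in both arguments the computational heart is the equivalence between the cotranslation identity $Z(r,gh)=Z(hr,g)\circ Z(r,h)$ and axiom (iii) of Definition \ref{300}. The differences are in the bookkeeping, and there your write-up is more careful than the paper's. Where you fix a base point $\psi_0$ and work with the orbit map, the paper identifies $Y$ with the quotient $G/\stab(\sigma)$ (with $\stab(\sigma)$ the kernel of the action) and writes $Y=\{\psi_{\overline{g}}\}$; that identification itself involves a choice of base point and silently requires that point stabilizers coincide with the kernel. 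This is exactly what your well-definedness lemma supplies: since $Z(hr,g)=Z(r,gh)\circ Z(r,h)^{-1}$, the function $Z(hr,\cdot)$ is determined by $Z(r,\cdot)$, so $\psi_r=\psi_{r'}$ forces $\psi_{hr}=\psi_{hr'}$, which makes the fiber subgroup $\{c:\psi_c=\psi_e\}$ normal and legitimizes $Y\cong G/\stab(\sigma)$ --- a step the paper omits entirely. Your observation that strict bijectivity holds only relative to the marking (the base shifts $Z'(r,t)=Z(rc,t)$ are genuinely distinct cotranslations producing the same quadruple $(X,G,Y,\sigma)$) is likewise a real refinement: the paper's ``bijective correspondence'' hides the same choice inside its identification of $Y$ with the quotient, and your round-trip verification makes precise in exactly what sense the theorem holds.

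One caution on continuity, the single place where your outline contains an unproved step: you propose to descend $\sigma$ along the orbit map $q:G\to Y$, $r\mapsto\psi_r$, by treating $q$ as a quotient map and invoking the Whitehead-type product theorem; but $Y$ carries the subspace compact-open topology from $\mathcal{C}\big(G\!\times\!X;X\big)$, and a continuous surjection onto a subspace need not be a quotient map, so this step requires justification (alternatively, one can attack continuity of $\sigma$ directly, using that $Z^{\inv}(g,h)=Z(hg,h^{-1})$ makes the needed inverses continuous in the groupoid variables, without ever invoking continuity of inversion on all of $\mathbb{A}_X$ --- which, contrary to your closing parenthetical, does \emph{not} follow from local compactness and Hausdorffness alone; one needs, e.g., compactness or local connectedness of $X$). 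To be fair, the paper's proof asserts the continuity of $Z$ and of $\sigma$ with no verification whatsoever, so on this point your attempt is not weaker than the printed argument, merely explicit about where the danger lies.
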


\begin{proof}
 Let $(X,G,Y,\sigma)$ be a skew-product where $\sigma$ is transitive. The admissibility condition implies $Y_G\subset \mathbb{A}_X$. On the other hand, as $\sigma$ is transitive, we can identify $Y$ with the quotient group,  $G/\stab(\sigma)$, where 
    $$\stab(\sigma)=\left\{g\in G: \sigma(g,y)=y,\,\forall\, y\in Y\right\}.$$

    Hence, we write $Y=\left\{\psi_{\overline{g}}:g\in G\right\}$, where $\overline{g}$ denotes the class of $g$ on the quotient $G/\stab(\sigma)$. Moreover, the action $\sigma:G\rimes Y\to Y$ is rewritten as $\hat{\sigma}:G\rimes Y\to Y$
given by $\hat{\sigma}(\psi_{h,\overline{g}})=\psi_{\overline{hg}}$, that is, it is identified with the action of left translations of $G$ on $G/\stab(\sigma)$.

    \smallskip
Now, we can define the continuous map $Z:G\rimes G\to \mathbb{A}_X$ given by$$Z(g,h)=\widetilde{\e}_G(\psi_{\overline{g}},h)=\psi_{\overline{g}}(h,\cdot),$$
 and we have
    \begin{eqnarray*}
        Z(g,kh)=Z(hg,k)\circ Z(g,h)&\Leftrightarrow&\left[Z(g,kh)\right](x)=\left[Z(hg,k)\circ Z(g,h)\right](x),\quad\forall\,x\in X\\
        &\Leftrightarrow&\left[\widetilde{\e}_G(\psi_{\overline{g}},kh)\right](x)=\widetilde{\e}_G(\psi_{\overline{hg}},k)\left[\left[ \widetilde{\e}_G(\psi_{\overline{g}},h)\right](x)\right],\quad\forall\,x\in X\\
        &\Leftrightarrow&\psi_{\overline{g}}(kh,x)=\psi_{\overline{hg}}\left(k,\psi_{\overline{g}}(h,x)\right),\quad\forall\,x\in X\\
        &\Leftrightarrow&\psi_{\overline{g}}(kh,x)=\left[\hat{\sigma}(h,\psi_{\overline{g}})\right]\left(k,\psi_{\overline{g}}(h,x)\right),\quad\forall\,x\in X,
    \end{eqnarray*}
and as the last condition is guaranteed by third the axiom of skew-products (Definition \ref{300}), then $Z$ is indeed a cotranslation.

\smallskip
Conversely, given a continuous cotranslation $Z:G\rimes G\to \mathbb{A}_X$, for each $g\in G$ we define $\psi_g:G\rimes X\to X$ given by  $\psi_g(h,x)=\left[Z(g,h)\right](x)$. Then, defining $Y:=\left\{\psi_g:g\in G\right\}$, it is admissible, since groupoid morphisms preserve units, and the only unit in $\mathbb{A}_X$ is the identity. On the other hand, defining the action $\sigma:G\times Y\to Y$ given by $\sigma(h,\psi_g)=\psi_{hg}$, or equivalently
    $$\left[\sigma(h,\psi_g)\right](k,x)=\left[Z(hg,k)\right](x),$$
    we obtain, by the same previous argument, that $(X,G,Y,\sigma)$ is a skew-product dynamical system, where the action $\sigma$ clearly results transitive.
\end{proof}

\begin{remark}\label{201}
{\rm
    The preceding theorem demonstrates that all generalizations of dynamical systems derived from skew-products are encompassed by cotranslations. The distinctive virtue of cotranslations lies in their ability to more clearly represent the underlying algebraic structure of these dynamics. This parallel can be drawn to the way group morphisms capture the algebraic structure of group actions.}
\end{remark}

Now we give further properties and applications for cotranslations. In the following we use the notation $Z^{\inv}(g,h)=\left[Z(g,h)\right]^{-1}$.

\begin{lemma}
    Set a cotranslation $Z:G\rimes G\to \mathbb{A}_X$. If $e\in G$ is the group unit, then for every $g,h\in G$ we have $$Z(g,e)=\Id\quad\text{and}\quad Z^\inv(g,h)=Z(hg,h^{-1}).$$
\end{lemma}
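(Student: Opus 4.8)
The plan is to exploit the single algebraic identity into which the cotranslation condition unpacks, namely
$$Z(g,kh)=Z(hg,k)\circ Z(g,h), \qquad g,h,k\in G,$$
which holds for \emph{all} triples because the pair $\big((hg,k),(g,h)\big)$ is always composable in the left translations groupoid: the base $hg$ of the first factor equals $h\cdot g$, the translate of the base of the second. Both asserted identities can then be obtained either by specializing this relation or by quoting the two structural facts recorded just after the definition of groupoid morphism, that such morphisms preserve units and involutions. Here the target $\mathbb{A}_X$ is a group regarded as a groupoid, so its units space is $\{\Id\}$ and its involution $\ninv$ is ordinary inversion.

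For the first identity I would set $h=e$ in the displayed relation, obtaining $Z(g,k)=Z(g,k)\circ Z(g,e)$ for every $g,k\in G$; composing on the left with the inverse of $Z(g,k)$, which exists precisely because the range of $Z$ lies in $\mathbb{A}_X$, yields $Z(g,e)=\Id$. Equivalently, one notes that $(g,e)$ lies in the units space of $G\rimes G$ and invokes preservation of units together with the fact that $\Id$ is the only unit of $\mathbb{A}_X$.

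For the second identity the quickest route is involution preservation: computing in the left translations groupoid gives $\ninv(g,h)=(hg,h^{-1})$, whence $Z^{\inv}(g,h)=[Z(g,h)]^{-1}=\ninv\big(Z(g,h)\big)=Z\big(\ninv(g,h)\big)=Z(hg,h^{-1})$. A self-contained alternative is to specialize the displayed relation at $k=h^{-1}$, which gives $Z(hg,h^{-1})\circ Z(g,h)=Z(g,h^{-1}h)=Z(g,e)=\Id$ by the first part; since $Z(g,h)$ is invertible in $\mathbb{A}_X$, this left inverse is automatically its two-sided inverse.

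I expect the only delicate point to be bookkeeping rather than mathematics: one must correctly read off the composition law $(hg,k)\bullet(g,h)=(g,kh)$ and the involution $\ninv(g,h)=(hg,h^{-1})$ from the left translations groupoid structure, confirm that the relevant pairs are automatically composable, and keep track of the order of composition in $\mathbb{A}_X$. The passage from a one-sided to a genuine inverse, and from a unit to $\Id$, both hinge on the target being the group $\mathbb{A}_X$ rather than a general groupoid, so that invertibility and triviality of the units space are available.
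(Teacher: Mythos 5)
Your proof is correct and takes essentially the same approach as the paper: the paper's proof consists precisely of the two structural facts you invoke, that groupoid morphisms preserve units (with $\Id$ the only unit of $\mathbb{A}_X$) and preserve involutions (with $\ninv(g,h)=(hg,h^{-1})$ in the left translations groupoid). Your additional self-contained computations, specializing the relation $Z(g,kh)=Z(hg,k)\circ Z(g,h)$ at $h=e$ and at $k=h^{-1}$ and using invertibility in $\mathbb{A}_X$, merely make explicit what the paper leaves implicit, and both are carried out correctly.
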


\begin{proof}
    The first equality follows from the fact that groupoid morphisms preserve units, and the only unit in $\mathbb{A}_X$ is the identity. The second equality follows from the fact that groupoid morphisms preserve involutions.
\end{proof}

\begin{proposition}
Let $Z:G\rimes G\to \mathbb{A}_X$ be a cotranslation. Let $\gamma:G\to \mathbb{A}_X$ be a group morphism such that $$\gamma(k)Z(g,h)=Z(g,h)\gamma(k),\qquad\forall\,g,k,h\in G,$$
then $W:G\rimes G\to \mathbb{A}_X$ given by $W(g,h)=Z(g,h)\gamma(h)$ is a cotranslation.
\end{proposition}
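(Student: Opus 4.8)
The plan is to verify directly that $W$ satisfies the defining identity of a cotranslation. Recall that being a groupoid morphism on the left translations groupoid $G\rimes G$ unfolds, exactly as computed in Example \ref{159}, to the cocycle identity $Z(g,kh)=Z(hg,k)\circ Z(g,h)$ for all $g,h,k\in G$; so it suffices to check that the analogous relation $W(g,kh)=W(hg,k)\circ W(g,h)$ holds for all $g,h,k\in G$. First I would note that $W$ is well defined as a map into $\mathbb{A}_X$: since $Z(g,h)\in\mathbb{A}_X$ and $\gamma(h)\in\mathbb{A}_X$, their composite $W(g,h)=Z(g,h)\gamma(h)$ is again an invertible morphism of $X$.

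The core of the argument is a short direct computation. Expanding the left-hand side and using that $\gamma$ is a group morphism gives $W(g,kh)=Z(g,kh)\gamma(kh)=Z(g,kh)\gamma(k)\gamma(h)$, and then applying the cotranslation identity for $Z$ yields $W(g,kh)=Z(hg,k)Z(g,h)\gamma(k)\gamma(h)$. On the other hand, expanding the right-hand side directly gives $W(hg,k)\circ W(g,h)=Z(hg,k)\gamma(k)Z(g,h)\gamma(h)$. Comparing the two expressions, the identity $W(g,kh)=W(hg,k)\circ W(g,h)$ is equivalent, after cancelling the invertible factors $Z(hg,k)$ on the left and $\gamma(h)$ on the right, to $Z(g,h)\gamma(k)=\gamma(k)Z(g,h)$.

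This last equality is precisely the commutativity hypothesis relating $\gamma$ and $Z$, so the verification closes. The only place where the hypothesis is genuinely used is this swap of $\gamma(k)$ past $Z(g,h)$, which is what forces the two orderings of factors to match; everything else is bookkeeping with the morphism property of $\gamma$ and the cocycle identity of $Z$. Accordingly, I expect the main (and essentially only) obstacle to be organizational rather than conceptual: one must track the twisted base point $hg$ appearing in the first argument of $Z$ and $W$, and confirm that the commutativity assumption---stated uniformly over all $g,k,h\in G$---is exactly strong enough to absorb the factor $\gamma(k)$ irrespective of which base point occurs. No topological input is needed beyond observing that composition and inversion in $\mathbb{A}_X$ preserve continuity, so that $W$, being assembled from the continuous pieces $Z$ and $\gamma$, is again continuous.
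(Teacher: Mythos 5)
Your proposal is correct and follows essentially the same route as the paper: both expand $W(g,kh)=Z(g,kh)\gamma(kh)$, use the morphism property of $\gamma$ and the cocycle identity of $Z$, and then invoke the commutativity hypothesis to swap $\gamma(k)$ past $Z(g,h)$, which the paper writes as a single chain of equalities rather than your cancel-and-compare phrasing. The verification of the hypothesis's exact role and the remarks on continuity are fine but add nothing beyond the paper's argument.
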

\begin{proof}
    It is enough to see that
    \begin{align*}
        W(g,kh)=Z(g,kh)\gamma(kh)&=Z(hg,k)Z(g,h)\gamma(k)\gamma(h)\\
    &=Z(hg,k)\gamma(k)Z(g,h)\gamma(h)\\
    &=W(hg,k)W(g,h).
    \end{align*}
\end{proof}

\begin{example}
    {\rm
    Consider $X=\mathbb{R}^d$ as a Banach space. In this context, $\mathbb{B}_X\cong\mathcal{M}_d(\mathbb{R})$ and $\mathbb{A}_X\cong GL_d(\mathbb{R})$. Consider a linear nonautonomous differential equation $\dot{x}=A(t)x(t)$, where $t\mapsto A(t)$ is locally integrable. Denote the transition matrix for this equation by $\Phi:\mathbb{R}\rimes\mathbb{R}\to \mathbb{A}_X$. We can see, analogously to Example \ref{191}, that $Z:\mathbb{R}\rimes\mathbb{R}\to \mathbb{A}_X$, given by $Z(r,t)=\Phi(r,t+r)$, defines a cotranslation.

    \smallskip
    
    Choose $\lambda\in \mathbb{R}$ and define $\gamma:\mathbb{R}\to \mathbb{A}_X$ by $\gamma(t)=e^{-\lambda t} \cdot\Id$. It is easily a group morphism which verifies the conditions of the previous proposition. Hence, $W:\mathbb{R}\times\mathbb{R}\to \mathbb{A}_X$ given by $W(r,t)=Z(r,t)\gamma(t)$ is a cotranslation. Moreover, in this case $W$ is once more the cotranslation associated to a linear differential equation, since it is obtained in the same fashion as $Z$, but regarding the shifted linear nonautonomous differential equation $\dot{x}=\left[A(t)-\lambda\cdot\Id\right]x(t)$.
    }
\end{example} 

Now we present a result that depicts how a cotranslation on a given space can be regarded as a group action, {\it i.e.} a classic dynamical system, on a different suitable space. As in Theorem \ref{325}, we express this result in terms of topological dynamics.

\begin{proposition}\label{202}
  Let $X$ be a topological space and $G$ a topological group, both endowed with Hausdorff locally compact topologies. Consider $G\times X$ as topological space equipped with the product topology, and set on both $\mathbb{A}_{G\times X}$ and  $\mathbb{A}_X$ the correspondent compact-open topology. Furthermore, denote the canonical projections by $\pi_X:G\times X\to X$ and $\pi_G:G\times X\to G$.
  
  \smallskip
  
  There exists a bijective correspondence between continuous cotranslations $Z:G\times G\to \mathbb{A}_X$ and continuous group morphisms $W:G\to \mathbb{A}_{G\times X}$ such that $ (\pi_G\circ W(g))(h,x)=gh$ for every $x\in X$. Moreover, the correspondence is given by
  $$Z(h,g)(x)=\left(\pi_x\circ W(g)\right)(h,x),$$
  and 
  $$W(g)(h,x)=\left(gh,Z(h,g)(x)\right),\qquad \forall\, g,h,\in G,\,x\in X.$$
\end{proposition}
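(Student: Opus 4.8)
The plan is to verify that the two explicit formulas given in the statement are mutually inverse and that each one sends a valid object on one side to a valid object on the other. I would organize the argument into three parts: (1) given a cotranslation $Z$, check that the formula $W(g)(h,x)=(gh,Z(h,g)(x))$ defines a continuous group morphism $G\to\mathbb{A}_{G\times X}$ satisfying the projection condition $(\pi_G\circ W(g))(h,x)=gh$; (2) given such a $W$, check that $Z(h,g)(x)=(\pi_X\circ W(g))(h,x)$ defines a continuous cotranslation; (3) check that the two assignments are inverse to each other.

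For part (1) the main computation is the homomorphism property. Fixing $g,g'\in G$, I would compute $W(g')\circ W(g)$ applied to $(h,x)$: first $W(g)(h,x)=(gh,Z(h,g)(x))$, and then applying $W(g')$ to this pair gives $(g'gh, Z(gh,g')(Z(h,g)(x)))$. On the other hand $W(g'g)(h,x)=(g'gh, Z(h,g'g)(x))$. So the homomorphism identity reduces exactly to $Z(gh,g')\circ Z(h,g)=Z(h,g'g)$, which is precisely the cotranslation (groupoid morphism) relation $Z(g,kh)=Z(hg,k)\circ Z(g,h)$ read with the appropriate substitution of letters. I would also note that each $W(g)$ is a bijection of $G\times X$ with inverse built from $Z^{\inv}$, so that $W(g)\in\mathbb{A}_{G\times X}$, and that continuity of $W$ follows from continuity of $Z$ together with continuity of the group operation and of composition/evaluation in the compact-open topology; the projection condition $(\pi_G\circ W(g))(h,x)=gh$ is immediate from the first coordinate of the formula.

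For part (2), starting from a morphism $W$ with $(\pi_G\circ W(g))(h,x)=gh$, the projection condition guarantees that $W(g)$ has the block form $(h,x)\mapsto(gh,\ast)$, so that $Z(h,g)(x):=(\pi_X\circ W(g))(h,x)$ recovers the second coordinate; reversing the computation of part (1) shows the groupoid morphism relation for $Z$, and one checks $Z(h,g)\in\mathbb{A}_X$ by restricting the inverse of $W(g)$ to the appropriate fiber. Part (3) is then a direct substitution: plugging one formula into the other and using the projection constraint returns the original object in each direction. The step I expect to require the most care is verifying that $Z(h,g)$ is genuinely invertible as a map $X\to X$ (rather than merely a continuous self-map) directly from $W(g)\in\mathbb{A}_{G\times X}$: one must argue that the fiberwise restriction of the homeomorphism $W(g)$ over the compatible base fibers is itself a homeomorphism of $X$, which uses the product structure and the projection condition in an essential way. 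Continuity in the compact-open topology along the correspondence is routine but should be stated, relying on the standard exponential-law and evaluation continuity results already cited in the paper.
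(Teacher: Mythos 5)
Your proposal is correct and takes essentially the same route as the paper's own proof: the paper likewise verifies, via the two explicit formulas, that the group-morphism identity $W(g_2)\circ W(g_1)=W(g_2g_1)$ is equivalent to the cotranslation identity $Z(g_1h,g_2)\circ Z(h,g_1)=Z(h,g_2g_1)$, in both directions. The points you flag for extra care---that $W(g)$ is genuinely in $\mathbb{A}_{G\times X}$ with inverse $W(g^{-1})$ built from $Z^{\inv}$, that $Z(h,g)$ is a homeomorphism of $X$ via the fiberwise restriction of $W(g)^{-1}$, that the two assignments are mutually inverse, and the continuity in the compact-open topology---are all left implicit in the paper (which disposes of continuity in a single sentence), so your plan is, if anything, more complete than the published argument.
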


\begin{proof}
The continuity of $Z$ follows from $W$ being continuous and vice versa. Take $g_1,g_2,h\in G$ and $x\in X$. If $W$ is a group morphism, then
\begin{align*}
\left[Z(g_1h,g_2)\circ Z(h,g_1)\right](x)&=Z(g_1h,g_2)\left( \left[\pi_X\circ W(g_1)\right](h,x)\right)\\
    &=\left[\pi_X\circ W(g_2)\right]\left(g_1h,\left[\pi_X\circ W(g_1)\right](h,x)\right)\\
    &=\left[\pi_X\circ W(g_2) \circ W(g_1) \right](h,x)\\
    &=\left[\pi_X\circ W(g_2g_1) \right](h,x)\\
    &=Z(h,g_2g_1)(x),
\end{align*}
thus $Z$ is a cotranslation. On the other hand, if $Z$ is a cotranslation, then
\begin{align*}
    \left[W(g_2)\circ W(g_1)\right](h,x)&=W(g_2)\left(g_1h,Z(h,g_1)x\right)\\
    &=\left(g_2g_1h,\left[Z(g_1h,g_2)\circ Z(h,g_1)\right](x)\right)\\
    &=\left(g_2g_1h,Z(h,g_2g_1)(x)\right)\\
    &=W(g_2g_1)(h,x),
\end{align*}
thus $W$ is a group morphism.
\end{proof}

It is worth noting that a crucial condition in the previous proposition is that $G$ is itself a topological space, {\it i.e.} an object in the same category as $X$, so that the object $\mathbb{A}_{G\times X}$ has a clear meaning. Hence, in order to generalize this conclusion to other categories, we must in general  consider a group object of the desired category (we refer the reader to \cite[Chapter 4]{Awodey}). 

\smallskip

Nevertheless, this does not trivialize the construction that cotranslations present, since by changing the topological space in which the dynamics occurs, we change qualitative descriptions of said dynamics. For instance, in the case of a nonautonomous differential equation on $\mathbb{R}^d$, say $\dot{x}=f(t,x)$, the construction on Proposition \ref{202} coincides with the usual definition of an autonomous differential equation on $\mathbb{R}^{d+1}$ by taking $y=(t,x)$ and $\dot{y}=(1,f(y))$. However, it is well known that qualitative descriptions of the second equation are not well translated to the first.

\smallskip

Having established the main properties of cotranslations, we now turn our attention to the specific case when we consider the group $\mathbb{Z}$. This will shed light on the relevance of cotranslations in the context of nonautonomous difference equations.

\begin{proposition}
Set $X$ a Banach space, thus $\mathbb{B}_X$ is the Banach algebra of linear continuous operators and $\mathbb{A}_X$ is the topological group of its homeomorphic isomorphisms. There is a bijective correspondence between cotranslations $Z:\mathbb{Z}\times\mathbb{Z}\to \mathbb{A}_X$ and nonautonomous linear difference equations $x(n+1)=A(n)x(n)$, with $\mathbb{Z}\ni n\mapsto A(n)\in \mathbb{A}_X$, where the correspondence is given by
    \begin{equation*}
    Z(n,m)= \left\{ \begin{array}{lcc}
             A(n+m-1)A(n+m-2)\cdots A(n) &  \text{ if } &   m>0 \\
             \\ \Id & \text{ if }& m=0 \\
             \\ A^{-1}(n+m)A^{-1}(n+m+1)\cdots A^{-1}(n-1) & \text{ if } & m<0.
             \end{array}
   \right.
\end{equation*}
\end{proposition}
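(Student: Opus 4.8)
The plan is to exhibit the two maps of the correspondence explicitly and show they are mutually inverse, concentrating all the work in a single recursion. Writing $G=\mathbb{Z}$ additively, the cotranslation condition reads
\[
Z(n,k+m)=Z(n+m,k)\circ Z(n,m),\qquad \forall\,n,m,k\in\mathbb{Z},
\]
and since groupoid morphisms preserve units and the only unit of $\mathbb{A}_X$ is the identity, we also have $Z(n,0)=\Id$. The central observation I would isolate first is the \emph{forward recursion} obtained by taking $k=1$ above: setting $A(n):=Z(n,1)$, one gets
\[
Z(n,m+1)=Z(n+m,1)\circ Z(n,m)=A(n+m)\circ Z(n,m),\qquad \forall\,n,m\in\mathbb{Z}.
\]
Together with $Z(n,0)=\Id$, this recursion determines $Z(n,\cdot)$ uniquely on all of $\mathbb{Z}$: forward for $m>0$, and, since each $A(k)\in\mathbb{A}_X$ is invertible, backward for $m<0$ via the inverse recursion $Z(n,m-1)=A^{-1}(n+m-1)\circ Z(n,m)$.

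For the direction from difference equations to cotranslations, I start with a sequence $n\mapsto A(n)\in\mathbb{A}_X$, define $Z$ by the displayed piecewise formula, and verify it is a cotranslation. Rather than a case analysis on the signs of $m$, $k$ and $k+m$, I would first check directly from the formula that $Z$ satisfies the same forward recursion $Z(n,m+1)=A(n+m)\circ Z(n,m)$ for \emph{every} $m\in\mathbb{Z}$, the only nontrivial point being the crossover at $m=0$, where the factor $A(n-1)\circ A^{-1}(n-1)=\Id$ matches the passage from nonnegative to negative indices. Granting this, the cotranslation identity $Z(n,k+m)=Z(n+m,k)\circ Z(n,m)$ follows by induction on $k$ with $n,m$ fixed: the base case $k=0$ is $Z(n,0)=\Id$; the forward step applies the recursion at base point $n$ on the left and at base point $n+m$ on the right; and the backward step $k\to k-1$ uses the inverse recursion in the same way.

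For the reverse direction, given a cotranslation $Z$ I set $A(n):=Z(n,1)\in\mathbb{A}_X$ and must recover the formula. Here essentially no computation remains: the given $Z$ and the formula-defined cotranslation both satisfy the forward recursion with the same $A(n)=Z(n,1)$ and the same initial value $Z(n,0)=\Id$, so by the uniqueness noted above they coincide. Bijectivity is then immediate, since passing from $A$ to $Z$ and back returns $Z(n,1)=A(n)$, while passing from $Z$ to $A$ and back reconstructs $Z$ by the same uniqueness argument.

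I expect the only genuine obstacle to be verifying the forward recursion from the explicit formula across the sign change at $m=0$; once this recursion is established uniformly on $\mathbb{Z}$, both directions of the correspondence and its bijectivity reduce to one short induction and a uniqueness remark, so the branching built into the piecewise definition of $Z$ never has to be unwound in full.
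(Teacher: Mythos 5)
Your proof is correct and takes essentially the same route as the paper: the correspondence maps are identical ($A(n)=Z(n,1)$ in one direction, the displayed product formula in the other), and since the paper's own proof is only a two-line assertion (``clearly we obtain a groupoid morphism''), your forward-recursion-plus-uniqueness argument simply supplies the verification it leaves implicit, organized so that the sign case analysis on $m$, $k$, $k+m$ never has to be unwound. The only delicate points --- the crossover of the piecewise formula at the step from $m=-1$ to $m=0$, and the two-sided induction on $k$ using invertibility of each $A(n)$ for the backward step --- are both handled correctly, so nothing is missing.
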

\begin{proof}
By defining $Z$ as in the statement of the proposition, starting from the function $n\mapsto A(n)\in \mathbb{A}_X$, clearly we obtain a groupoid morphism. Conversely, if we start with a cotranslation it is enough to define $A(n)=Z(n,1)$ and we obtain the desired equation.
\end{proof}

The next section will present a continuous analogue to the preceding result. Concluding this section, we provide a generalization of the previous proposition for finitely generated groups. We omit the proof since it follows the same steps as before. However, before delving into this generalization, we introduce an auxiliary definition.

\begin{definition}
    Let $G$ be a discrete group with $n$ generators $\{\xi_1,\dots,\xi_n\}$ verifying the set of relations $R$. For each word $p\in R$ there is a positive integer $|p|$ called \textbf{length of the word} and a map $j_p:\{1,\dots,|p|\}\to \{1,\dots,n\}$ such that 
    $$p=\prod_{k=1}^{|p|}\xi_{j_p(k)}=\xi_{j_p(|p|)}\cdot\xi_{j_p(|p|-1)}\,\cdots\,\xi_{j_p(2)}\cdot\xi_{j_p(1)}.$$

    Let $X$ be a Banach space. We say that a collection of maps $A_i:G\to\mathbb{A}_X$, $i=1,\dots,n$, \textbf{preserves relations} if for each $p\in R$ and $\eta\in G$ one has
   \begin{align*}
       \Id_X&=\prod_{k=1}^{|p|}A_{j_p(k)}\left(\left[\prod_{\ell=1}^{k-1}\xi_{j_p(\ell)}\right]\eta\right)\\
       &:=A_{j_p(|p|)}(\xi_{j_p(|p|-1)}\cdots\xi_{j_p(1)}\eta)\circ \cdots \circ A_{j_p(2)}(\xi_{j_p(1)}\eta)\,\circ\, A_{j_p(1)}(\eta).
   \end{align*} 
\end{definition}

\begin{proposition}\label{460}
    Let $G$ be a discrete group with $n$ generators $\{\xi_1,\dots,\xi_n\}$ and $X$ an object on some category $\mathscr{C}$. A multivariable nonautonomous difference equation of $G$ on $X$ is an equation of the form
    $$x(\xi_i\eta)=A_i(\eta)x(\eta),$$
    where the collection of maps $A_i:G\to \mathbb{A}_X$, $i=1,\dots,n$ preserves relations. A solution to this equation is a map $x:G\to X$. There is a bijective correspondence between multivariable nonautonomous difference equations of $G$ on $X$ and cotranslations $Z:G\rimes G\to \mathbb{A}_X$, which is given by $$A_i(\eta)=Z\left(\eta,\xi_i\right).$$
\end{proposition}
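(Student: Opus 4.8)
The plan is to extract from Example \ref{159} the defining cocycle identity of a cotranslation,
$$Z(\eta,gh)=Z(h\eta,g)\circ Z(\eta,h),\qquad\forall\,\eta,g,h\in G,$$
and to treat the two directions separately: the passage $Z\mapsto\{A_i\}$ is a direct induction, while the reverse passage $\{A_i\}\mapsto Z$ is a descent argument from the free group. Throughout I use the fact (established in the Lemma) that $Z(\eta,e)=\Id$.

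For the forward direction I would prove, by induction on the word length $m$, that for any word $w=\xi_{i_m}\cdots\xi_{i_1}$ a cotranslation satisfies
$$Z(\eta,w)=A_{i_m}(\xi_{i_{m-1}}\cdots\xi_{i_1}\eta)\circ\cdots\circ A_{i_2}(\xi_{i_1}\eta)\circ A_{i_1}(\eta),$$
where $A_i(\eta):=Z(\eta,\xi_i)$. The base case $m=1$ is the definition of $A_{i_1}$, and the inductive step is a single application of the cocycle identity with $g=\xi_{i_m}$ and $h=\xi_{i_{m-1}}\cdots\xi_{i_1}$. Applying this to a relator $p\in R$, whose spelling equals $e$ in $G$, and using $Z(\eta,p)=Z(\eta,e)=\Id$, the right-hand side collapses to the identity; this is exactly the ``preserves relations'' condition, so $\{A_i\}$ determines a multivariable nonautonomous difference equation.

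For the converse I would first extend the data to inverse generators, since the cocycle forces $Z(\eta,\xi_i^{-1})=A_i(\xi_i^{-1}\eta)^{-1}$ (apply the identity to $\xi_i^{-1}\xi_i=e$). Setting $B_{\xi_i}(\eta)=A_i(\eta)$ and $B_{\xi_i^{-1}}(\eta)=A_i(\xi_i^{-1}\eta)^{-1}$, I define on each word $w=s_m\cdots s_1$ of the free group $F$ on $\xi_1,\dots,\xi_n$ the value
$$\widetilde Z(\eta,w)=B_{s_m}(\overline{s_{m-1}\cdots s_1}\,\eta)\circ\cdots\circ B_{s_1}(\eta),$$
where $\overline{(\cdot)}$ denotes the image in $G$. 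By construction $\widetilde Z$ obeys the cocycle identity over $F$ (concatenation of words), and the free cancellations $\xi_i\xi_i^{-1}$, $\xi_i^{-1}\xi_i$ leave it unchanged because $B_{\xi_i^{-1}}(\xi_i\eta)\circ B_{\xi_i}(\eta)=\Id$.

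The heart of the proof, and the step I expect to be the main obstacle, is showing that $\widetilde Z$ descends to a well-defined map on $G\rimes G$, i.e. that $\widetilde Z(\eta,w)=\Id$ for every $w$ in the normal closure $N$ of $R$, not merely for the relators. I would bootstrap this from the cocycle identity: for a conjugate $w=gpg^{-1}$ with $p\in R$ one computes $\widetilde Z(\eta,gpg^{-1})=\widetilde Z(\overline{g^{-1}}\eta,g)\circ\widetilde Z(\eta,g^{-1})$, using that the intermediate factor $\widetilde Z(\overline{g^{-1}}\eta,p)=\Id$ by hypothesis and that $\overline p=e$; comparison with $\widetilde Z(\eta,gg^{-1})=\widetilde Z(\eta,e)=\Id$ then yields $\widetilde Z(\eta,gpg^{-1})=\Id$. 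Since $\widetilde Z(\eta,w_1w_2)=\widetilde Z(\overline{w_2}\eta,w_1)\circ\widetilde Z(\eta,w_2)$, an induction on the number of conjugate factors gives $\widetilde Z(\eta,w)=\Id$ for all $w\in N$. Well-definedness of $Z(\eta,\overline w):=\widetilde Z(\eta,w)$ follows, because $\overline w=\overline{w'}$ forces $w^{-1}w'\in N$; the descended $Z$ inherits the cocycle identity and is thus a cotranslation with $Z(\eta,\xi_i)=A_i(\eta)$. Finally, the induction of the forward direction shows the two assignments are mutually inverse, establishing the claimed bijection.
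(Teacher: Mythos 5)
Your proof is correct, and it is strictly more complete than what the paper provides: the paper omits the proof of Proposition \ref{460} entirely, asserting that it ``follows the same steps'' as the preceding $\mathbb{Z}$-case, where $Z$ is defined by an explicit ordered product of the $A(n)$'s and the groupoid-morphism property is declared clear. Your forward direction (induction on word length, giving $Z(\eta,w)$ as the ordered product of the $A_i$'s along $w$, so that $Z(\eta,e)=\Id$ forces exactly the paper's preserves-relations condition) is the implicit content of the paper's correspondence formula $A_i(\eta)=Z(\eta,\xi_i)$. Where you genuinely add substance is in the converse: for $G=\mathbb{Z}$ there are no relations, so the product formula is automatically well defined, but for a general presentation one must check that the value assigned to a word depends only on its image in $G$. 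Your descent argument --- extending to inverse letters via the forced value $Z(\eta,\xi_i^{-1})=A_i(\xi_i^{-1}\eta)^{-1}$ (which agrees with the paper's identity $Z^{\inv}(g,h)=Z(hg,h^{-1})$), checking invariance under free cancellation, killing conjugates $gpg^{-1}$ by comparison with $\widetilde{Z}(\eta,gg^{-1})=\Id$, and inducting over products of conjugates to get triviality on the normal closure $N$ --- is precisely the step that the paper's ``we omit the proof'' sweeps away, and you carry it out correctly. Two small completions are worth making explicit. First, an element of $N$ is a product of conjugates of relators \emph{and of their inverses}; you should note that $\widetilde{Z}(\eta,p)=\Id$ for all $\eta$ yields $\widetilde{Z}(\eta,p^{-1})=\Id$ (apply the cocycle identity to $pp^{-1}=e$), after which your induction covers all of $N$. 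Second, for the bijection you need injectivity of $Z\mapsto\{A_i\}$, i.e.\ that a cotranslation is determined by its values on the generators; this follows because the cocycle identity together with the forced values on $\xi_i^{-1}$ propagates $Z(\eta,\cdot)$ to every element of $G$, but your forward induction as written only treats positive words, so a sentence covering words with inverse letters should be added.
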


\subsection{Examples}
In the following we present various examples of cotranslations on diverse groups. These examples possess the characteristic of not corresponding to a classical dynamical system, yet they still encapsulate dynamics generated from these groups. Through the examples, the space $\mathbb{R}^d$ is provided its usual topology. On the other hand, for a collection of spaces $\{U_\lambda\}_{\lambda\in \Lambda}$, the space $\amalg_{\lambda\in \Lambda}\,U_\lambda$ is provided the disjoint union topology, while $\Pi_{\lambda\in \Lambda}\,U_\lambda$ has the product topology. 

\smallskip

A virtue of the groups we study here is that all of them allow for a canonical normal form, {\it i.e.} a canonical choice of writing of elements corresponding to their generator set. This normal form on the groups allows for an easier to verification that the maps we use indeed preserve the group relations. Finally, the group unit is always denoted by $e$. 

\smallskip

We begin by characterizing cotranslations of cyclic groups $C_n=\langle a \,| \,a^n=e\rangle$.  On any topological space $X$, take $f_0,\dots,f_{n-1}$ homeomorphisms of $X$ on itself verifying $f_{n-1}\circ \cdots\circ f_0=\Id_X$. Then, defining $A:C_n\to \mathbb{A}_X$ by $A(a^k)=f_{k}$, it clearly preserves the group relations, hence by Proposition \ref{460}, it defines a cotranslation. Note that the only way that this cotranslation corresponds to a group morphism is $f_0=\cdots=f_{n-1}$, while the notion of cotranslation allows for a more general choice of homeomorphisms. In particular, each pair of  $f_k$, $f_j$ does not need to commute.

\begin{example}
{\rm
    Consider $C_3=\langle a\,|\, a^3=e\rangle$, the space $X=\mathbb{R}$ and the functions $f(x)=x+1$ and $g(x)=2x$. Setting $A: C_3 \rightarrow \mathbb{A}_X$ given by $A(e)(x)=f(x)$, $A(a)(x)=g(x)$ and $A(a^2)(x)=(f^{-1}\circ g^{-1})(x)=\frac{x}{2} -1$, we can define the cotranslation $Z: C_3\times C_3 \rightarrow \mathbb{A}_X$ given by     
    \begin{table}[h]
    \centering
    \begin{tabular}{ccc}
        $Z(e,e)=\Id_X$, & $Z(a,e)= \Id_X,$& $Z(a^2,e)=\Id_X,$ \\
        $Z(e,a)(x)= x+1,$ & $Z(a,a)(x)= 2x,$ & $Z(a^2,a)(x)= \frac{x}{2}-1,$\\
        $Z(e,a^2)(x)= 2x+2,$ & $Z(a,a^2)(x)=x-1,$& $Z(a^2,a^2)(x)= \frac{x}{2}.$
    \end{tabular}
\end{table}

Remarkably, there are no non-trivial classic group action of $C_3$ on $\mathbb{R}$. Moreover, in this case clearly not all maps commute.
    }
\end{example}

\begin{example}

    {\rm 
    Consider $C_{n}=\langle\, a\, |\, a^n=e\, \rangle$, with $n\geq 2$, and $X=\mathbb{R}^{d}$.
    Take $k$ non-empty finite sets of different real numbers $R_1$, $R_2$, \dots $R_k$, such that $\Sigma_{r \in R_i}\, r\in 2\pi \mathbb{Z}$ for every $i\in\{1,2,\dots, k\}$ and $\Sigma_{i=1}^k\ m_i=n$, where  $m_i=|R_i|$.

    \smallskip

Choose $\pi_1,\dots,\pi_k$ planes on $\mathbb{R}^d$ containing the origin and denote by $\theta_i(r)$ the counterclockwise rotation of $\mathbb{R}^d$ by the angle $r$ respect to a fixed normal vector to the plane $\pi_i$, for $r$ in $R_i$ and $i\in\{1,2,\dots, k\}$.  Set $\Theta=\left\{\theta_i(r):r\in R_i,\, i\in\{1,\dots,k\}\right\}$ and fix a bijection $A:C_n\rightarrow \Theta\subset \mathbb{A}_X$. As every rotation $\theta_i(r)$ commutes with each $\theta_j(r')$, we have
    $$A(a^{n-1+t})\circ \dots \circ A(a^{1+t})\circ A(a^t)=\Id_X,$$
    for every $0\leq t\leq n-1$, hence $A$ defines a cotranslation. We name this dynamics as \textit{multi-rotational cotranslations}.
    }
\end{example}

\begin{example}
    {\rm 
    Fix a positive integer $n$ and consider $C_{3n}=\langle\, a\, |\, a^{3n}=e\, \rangle$ and $X=\amalg_{i=1}^n\ S^1$. For each $i\in\{1,\dots, n\}$, set a map $A^i:C_3=\langle\, r\, |\, r^{3}=e\, \rangle\rightarrow \mathbb{A}_{S_i^1}$, where $S_i^1$ corresponds to the $i$-th copy of $S^1$ in $X$, such that it defines a cotranslation of $C_3$ on $S_i^1$. 

\smallskip
    
    Note that each $A^i:C_3\rightarrow \mathbb{A}_{S_i^1}$ defines a map $\tilde{A}^i: C_3 \rightarrow \mathbb{A}_X$ through $\left[\tilde{A}^i(r^j)\right](x)=\left[A^i(r^j)\right](x)$ if $x\in S_i^1$ and $\left[\tilde{A}^i(r^j)\right](x)=x$ if not. With these functions we can define $A:C_{3n}\rightarrow \mathbb{A}_X$ given by $A(a^{3i + j})=\tilde{A}^{i+1}(r^j)$ for all $i\in \{0,\dots,n+1\}$ and $j\in\{0,1,2\}$. 

\smallskip
    
    As each $\tilde{A}^i(r^\ell)$ acts non-trivially only on the $i$-th copy of $S^1$, we have $\tilde{A}^i(r^\ell)\circ\tilde{A}^j(r^k) =\tilde{A}^j(r^k)\circ \tilde{A}^i(r^\ell) $ for all $i, j\in\{1,\dots, n\}$ and $\ell,k\in\{0,1,2\}$. Therefore, $A$ defines a cotranslation of $C_{3n}$ on $X$. We name this dynamics as {\it disjoint cotranslations}. Note that under a similar construction, we can define a disjoint cotranslation of $C_{n_1+\dots+n_p}$ by leveraging cotranslations of $C_{n_1},\dots, C_{n_p}$. 
    }
\end{example}

Until now, every example corresponds to a 1-generated group, consequently, according to Proposition \ref{460}, we only needed to consider a single function $A:G\to \mathbb{A}_X$. In the following we study examples with groups generated by two o more elements.

\smallskip

Consider for instance the dihedral group of $2n$ elements $D_{2n}=\langle r,s\, |\, r^n=s^2=(sr)^2=e \rangle =\{e, r, \dots, r^{n-1}, s, sr, \dots, sr^{n-1}\}$. Let $X$ be some object in a category. In order to define a cotranslation of $D_{2n}$ on $X$, it is enough to consider two functions $A_r,A_s: D_{2n} \rightarrow \mathbb{A}_X$ satisfying:
    \begin{align*}
    A_r(r^{i+n-1})\circ \dots \circ A_r(r^{i+1})\circ A_r(r^i) &= \Id_X, \\
    A_r(sr^{i-n+1})\circ \dots \circ A_r(sr^{i-1})\circ A_r(sr^i) &= \Id_X, \\
    A_s(sr^i) \circ A_s(r^i) = A_s(r^i) \circ A_s(sr^i) &= \Id_X, \\
    A_s(sr^i)\circ A_r(sr^{i+1})\circ A_s(r^{i+1}) \circ A_r(r^i) &= \Id_X, \\
    A_s(r^i)\circ A_r(r^{i-1})\circ A_s(sr^{i-1}) \circ A_r(sr^i) &= \Id_X,
\end{align*}
for all $i\in \{0,1,\dots, n-1\}$.

\begin{example}
    {\rm 
    Take the group $D_6=\langle r,s \,|\, r^3=s^2=(sr)^2=e \rangle$ and three real numbers $\alpha,$ $\beta$ and $\gamma$ such that $\alpha+\beta+\gamma\in 2\pi \mathbb{Z}$. For a real number $\zeta$, the counterclockwise rotation of $\mathbb{R}^2$ by the angle $\zeta$ is denoted by $\theta_\zeta$. Let us consider $X= \amalg_{k\in \mathbb{Z}}\ \mathbb{R}^2\times \{k\}$. Given a real number $\zeta$, set $R_{e,\zeta}$ and $R_{o,\zeta}$ the homeomorphisms of $X$ given by
   $$R_{e,\zeta}(v,k)= \left \{
   \begin{array}{ccc}
      (\theta_{\zeta}(v), k)  &\text{if} &  k \text{ is even,}\\
      (v , k)  & \text{if} & k \text{ is odd,}
   \end{array}
   \right .$$
and
   $$R_{o,\zeta}(v,k)= \left \{
   \begin{array}{ccc}
      (v,k)  & \text{if} & k \text{ is even,} \\
      (\theta_{\zeta}(v), k) & \text{if}  & k \text{ is odd.}
   \end{array}
   \right .$$
Furthermore, let us denote by $\sigma$ the homeomorphism of $X$ given by $\sigma(v, k)=(v,k-1)$ for every $v\in\mathbb{R}^2$ and $k\in\mathbb{Z}$. Now, we define the functions $A_r,A_s:D_6 \rightarrow \mathbb{A}_X$ given by 
   \begin{table}[h]
   \centering
    \begin{tabular}{ccc}
        $A_r(e)=R_{e,\alpha}$ & $A_r(r)=R_{e,\beta}$ & $A_r(r^2)=R_{e,\gamma}$, \\
        $A_r(s)=R_{o,-\gamma}$ & $A_r(sr)=R_{o,-\beta}$ & $A_r(sr^2)=R_{o,-\alpha},$ 
    \end{tabular}
\end{table}\\
as well as $A_s(e)=A_s(r)=A_s(r^2)=\sigma$ and $A_s(s)=A_s(sr)=A_s(sr^2)=\sigma^{-1}$. It is easy to check that $A_r$ and $A_s$ satisfy the conditions above, therefore define a cotranslation of $D_6$ on $X$. Note that this cotranslation corresponds to a group action only if $\alpha=\beta=\gamma$.
    }
\end{example}

\begin{example}
    {\rm 
 More generally, suppose that a map $A:C_n\to \mathbb{A}_X$ defines a cotranslation of the cyclic group $C_n=\langle\, a\, |\, a^n=e\, \rangle$ on a topological space $X$.  Then, take the space $Y= \amalg_{k\in \mathbb{Z}}\, X$, with elements denoted by $y=(y_k)_{k\in \mathbb{Z}}$ and once again consider $\sigma$ the shift on $Y$ given by $\sigma(y)_{k}= y_{k-1}$ for $k\in\mathbb{Z}$ and $y\in Y$. Define the functions $A_e,A_o:C_n\rightarrow \mathbb{A}_Y$ given, for $g\in C_n$, by
    $$(A_e(g)(y))_{(k)}= \left \{ 
    \begin{array}{ccc}
        A(g)(y_k) & \text{if} & k \text{ is even}, \\
         y_k & \text{if} & k \text{ is odd},
    \end{array}
    \right .$$
and 
    $$(A_o(g)(y))_{(k)}= \left \{ 
    \begin{array}{ccc}
        y_k & \text{if} & k \text{ is even}, \\
         A(g)(y_k) & \text{if} & k \text{ is odd}.
    \end{array}
    \right .$$
    Finally, considering $D_{2n}=\langle r,s \,|\, r^n=s^2=(sr)^2=e \rangle$, define the maps $B_r,B_s: D_{2n} \rightarrow \mathbb{A}_Y$ by
    \[B_r(r^i)=A_e(a^i),\quad B_r(sr^i) = (A_o(a^{n-1-i}))^{-1},\quad B_s(r^i)=\sigma\quad \text{and}\quad B_s(sr^i)=\sigma^{-1},\]
    for $i\in\{0,1,\dots , n-1\}$. It can be verified that these maps preserve the relations of $D_{2n}$, therefore generating a cotranslation of $D_{2n}$ on $Y$.
    }    
\end{example}

Let us now study the infinite dihedral group $D_{\infty}=\langle\, a,b\, |\, a^2=b^2=e\, \rangle$. According to Proposition \ref{460}, to define a cotranslation on a space $X$ we need two functions $A_a,A_b: D_{\infty} \rightarrow \mathbb{A}_X$ verifying $A_a(aw)\circ A_a(w)=\Id_X$ and $A_b(bw)\circ A_b(w)= \Id_X$ for every $w\in D_{\infty}$.

\smallskip

For this, choose $\{g_n\}_{n\geq 0}$ and $\{h_n\}_{n\geq 0}$ two arbitrary sequences of homeomorphisms in $\mathbb{A}_X$. Then, we can define the functions $A_a$ and $A_b$ by $A_a(au)=g_{|u|}$ and $A_a(u)=g_{|u|}^{-1}$, where $u$ is a word not beginning with $a$,  and $A_b(bv)=h_{|v|}$ and $A_b(v)=h_{|v|}^{-1}$, where $v$ is a word not beginning with $b$. The dynamics emerging from this construction describe a classic dynamical system only if the sequences $\{g_n\}_{n\geq 0}$ and $\{h_n\}_{n\geq 0}$ are constant and each one is a fixed homeomorphism which is its own inverse.

\begin{example}
    {\rm 
    Consider the infinite dihedral group $D_{\infty}$ and $X=\mathbb{R}^2$. Consider the homeomorphism $f(x,y)=(x+1, y)$ and the real numbers sequence $\{\zeta_n= \frac{\pi}{2^{n+1}}\}_{n \geq 0}$. Again, let us denote by $\theta_\zeta$ the counterclockwise rotation of $\mathbb{R}^2$ by the angle $\zeta$.

    \smallskip

    We define the sequences $\{g_n\}_{n\geq 0}$, $\{h_n\}_{n\geq 0}$ by $g_n= f^n$  and $h_n=\theta_{\zeta_n}$ for all $n\geq 0$. If we define the maps $A_a$ and $A_b$ as above, we generate a cotranslation $Z: D_{\infty}\times D_{\infty}\rightarrow \mathbb{A}_X$ as in Proposition \ref{460}. Explicitly, and among others, we have

    $$Z(bab,ab)=g_{|b|}\circ h_{|ab|}= f\circ \theta_{\zeta_2},$$
    $$Z(abab,ab) = g_{|babab|}^{-1} \circ h_{|abab|}^{-1} = f^{-5} \circ \theta_{-\zeta_4},$$
    $$Z(ab,ababa) = g_{|ba|}^{-1}\circ h_{|a|}^{-1}\circ g_{|1|}^{-1}\circ h_{|1|} \circ g_{|b|} = f^{-2} \circ \theta_{-\zeta_1} \circ \Id_X \circ \theta_{\zeta_0} \circ f = f^{-2} \circ \theta_{\zeta_0 -\zeta_1}\circ f.$$
    }
\end{example}

\begin{example}
    {\rm 
    Let $X=\mathbb{R}^2$, consider a sequence $\{L_n\}_{n\geq 0}$ of straight lines going through the origin and a sequence $\{\zeta_n\}_{n\geq 0}$ of real numbers such that $0<\zeta_n\leq\pi$ for all $n\geq 0$. In order to construct a cotranslation, as previously, we define the sequences $\{g_n\}_{n\geq 0}$ and $\{h_n\}_{n\geq 0}$ of homeomorphisms, where $g_n$ denotes the symmetry of $\mathbb{R}^2$ respect to $L_n$, while $h_n$ denotes the rotation $\theta_{\zeta_n}$, with the same notation as before, for all $n\geq 0$. Then, the functions $A_a,A_b : D_{\infty} \rightarrow \mathbb{A}_X$ generate a cotranslation of $D_{\infty}$ on $X=\mathbb{R}^2$ as above. Note that the only case in which this dynamics are given by a classic action is if all the lines are the same and $\zeta_n=\pi$ for every $n\in \mathbb{N}$. 
    }
\end{example}

A group with a similar structure to the infinite dihedral is $C_2\ast C_3 = \langle\, a,b\, |\, a^2=b^3=e\, \rangle$, the free product of $C_2$ and $C_3$. On this group every non-trivial element $w$ can be written uniquely in one of the forms below: 
\[ab^{j}\cdots ab^{k}a,\quad ab^{j}\cdots ab^{k},\quad b^{j}\cdots ab^{k}\quad\text{or}\quad b^{j}\cdots ab^{k}a,\]
where $j,k\in\{0,1,2\}$.  Therefore, every non-trivial element $w$ satisfies either $w=au$ for a unique $u$ not beginning with $a$, or $w=b^{j}v$ for $j\in\{1,2\}$ and a unique element $v$ not beginning with a power of $b$.

\smallskip
    
    According to Proposition \ref{460}, a cotranslation of $C_2\ast C_3$ on $X$ is defined by two functions $A_a,A_b:C_2\ast C_3 \rightarrow \mathbb{A}_X$ such that $A_a(aw)\circ A_a(w)=\Id_X$ and $A_b(b^2w)\circ A_b(bw)\circ A_b(w)=\Id_X$ for all $w$ in $C_2\ast C_3$. 
    Thus, in order to define a cotranslation, we only need a sequence $\{g_n\}_{n\geq 0}$ in $\mathbb{A}_X$ and a sequence $\{h_{0,n},\ h_{1,n},\ h_{2,n}\}_{n\geq0}$ of automorphisms set such that $h_{2,n}\circ h_{1,n} \circ h_{0,n}=\Id_X$ for every $n\geq 0$. 

\smallskip

    Hence, we can define functions $A_a$ and $A_b$ given by $A_a(au)=g_{|u|}$, $A_a(u)=g_{|u|}^{-1}$ for every $u$ not beginning with $a$ and $A_b(b^2v)=h_{2,|v|}$, $A_b(bv)=h_{1,|v|}$, $A_b(v)=h_{0,|v|}$ for all elements $v$ which do not begin with a power of $b$.

\begin{example}
{\rm  
Take the set \textbf{3}$:=\{0,1,2\}$ and consider the space $X=\textbf{3}^{\mathbb{N}_0}= \Pi_{n\in \mathbb{N}_0} \textbf{3}$ provided the prodiscrete topology. We denote the elements of $X$ by $x=(x_k)_k$. On $\textbf{3}$, we denote the permutations $(0\ 1)$, $(1\ 2)$ and $(0\ 1\ 2)$ by $\sigma_0$, $\sigma_1$ and $\sigma_2$, respectively.  
    Let us define homeomorphisms $\Sigma_0, \Sigma_1, \Sigma_2: X \rightarrow X$ given by $\Sigma_j(x)_0=\sigma_j(x_0)$ and $\Sigma_j(x)_k = x_k$ for every $k\geq 1$ and $j\in\{0,1,2\}$.    

\smallskip

In addition, let us denote the permutation $(0\ n)$ in $\mathbb{N}_0$ by $\tau_n$, for all $n\geq 1$. Furthermore, we define, for all $n\geq1$, the functions $\rho_n: X\rightarrow X$ given by $\rho_n(x)_k = x_{\tau_n(k)}$ for all $k\geq 0$. Each $\rho_n$ is also a homeomorphism of $X$. 

\smallskip

Then, we can consider functions $A_a,A_b: C_2\ast C_3 \rightarrow \mathbb{A}_X$ given by $A_a(au)=\tau_{|u|}$, $A_a(u)=\tau_{|u|}^{-1}$ for every $u$ not beginning with $a$ and $A_b(b^jv)=\Sigma_j$ for $v$ not beginning with a power of $b$, for $j\in\{0,1,2\}$. Then, the maps $A_a$ and $A_b$ define a cotranslation of $C_2\ast C_3$ on $X$.

\smallskip

Note that we can analogously define a cotranslation of the group $C_2\ast C_n$ on the space $\textbf{n}^{\mathbb{N}_0}:=\{0,\dots,n-1\}^{\mathbb{N}_0}$.
    }
\end{example}

Other family of finitely generated groups with great relevance on dynamics are free groups. Consider for instance the free group of rank 2, denoted by $F_2$ with generators $a$ and $b$. As this group admits a presentation without non-trivial relations, then any couple of functions $A_a,A_b:F_2\to \mathbb{A}_X$ can define a cotranslation, according to Proposition \ref{460}.

\smallskip

In the following examples we explore dynamics of $F_2$ on the category of rooted tree graphs. 

\smallskip

Let $L$ be a set of $d$ elements. We denote by $L^*$ the set of all words of finite length on alphabet $L$. We regard $L^*$ as a rooted tree graph, as in \cite[Chapter 1]{Nekra}. A function $f:L^*\rightarrow L^*$ is an endomorphism if it preserves adjacency of the vertices. If $f$ is also a bijective function, it is called automorphism.

Given a word $w$ in $L^*$, we define the subtree $wL^*:=\{ wv\ |\ v\in L^* \}$. It admits a canonical injective morphism $\phi_w:L^*\rightarrow wL^*$, given by $\phi_w(v)=wv$ for every $v\in L^*$. Having said that and considering an automorphism $g$ of $L^*$, we define the automorphism $\tilde{g}_{(w)}$ such that 
\begin{equation*}
    \tilde{g}_{(w)}(v)=\left \{ \begin{array}{ccc}
       v  & \text{if} & v \not\in wL^*, \\
        wg(v') & \text{if} & v=wv'\in wL^*.
    \end{array} \right .
\end{equation*}

\begin{example}
    {\rm 
    Let us consider $\textbf{2}:=\{0,1\}$, the binary tree $\textbf{2}^*$ and the graph morphism $\phi: \{a^{\pm},b^{\pm}\}^{*}\rightarrow \textbf{2}^{*}$ which is induced  by $\phi(a)=\phi(a^{-1})=1$ and $\phi(b)=\phi(b^{-1})=0$. Let us also consider $\sigma$ in $\mathbb{A}_{\textbf{2}^*}$ defined by $\sigma(0v)=1v$ and $\sigma(1v)=0v$ for all $v$ in $\textbf{2}^*$.

\smallskip

Consider the free group $F_2=\langle \,a,b\,|\ \rangle$. Then, we define maps $A_a,A_b: F_2 \rightarrow \mathbb{A}_{\textbf{2}^*}$ by $A_a(w)=A_b(w)=\tilde{\sigma}_{(\phi(w))}$ for every $w\in F_2$. As $F_2$ does not admit any non-trivial relation, these functions define a cotranslation of $F_2$ on the binary tree, as desired.
    
    }
\end{example}

{\rm
The previous example has an clear disadvantage in comparison to others: the cotranslation defined is not injective. In particular, we can generate the same tree automorphism from different elements in the groupoid. 
}

\begin{example}
    {\rm  
    Consider again the group $F_2$ with the previous presentation and the tree $L^*$, where $L=\{a^{\pm}, b^{\pm}\}$. 
    Let $\sigma_a$ and $\sigma_b$ be a couple of permutations of $L$. Given these permutations, we can define the maps $A_a,A_b: F_2 \rightarrow \mathbb{A}_{L^*}$ given by $A_a(w)=\tilde{\sigma_a}_{(w)}$ and $A_b(w)=\tilde{\sigma_b}_{(w)}$ for all $w\in F_2$. 
    Once again,  these functions allows us define a cotranslation of $F_2$ on the tree $L^*$. Besides, as every element in $F_2$ is in correspondence with a unique vertex in $L^*$, the functions $A_a$ and $A_b$ are injective. 
    }
\end{example}

\begin{example}
    {\rm 
    More generally, let us consider the free group of rank $n$, which we denote by $F_n$ with free generators $a_1$, $a_2$, \dots $a_n$, and the $2n$-ary tree $L^*$, where $L=\{a_1^{\pm}, a_2^{\pm}, \dots, a_n^{\pm}\}$. 
    Given a finite sequence $\sigma_1$, $\sigma_2$, \dots, $\sigma_n$ of permutations of $L$, we can define the functions $A_i: F_n \rightarrow \mathbb{A}_{L^*}$ given by $A_i(w)=\tilde{\sigma_i}_{(w)}$ for all $w\in F_n$ and $i\in\{1,2,\dots, n\}$.

\smallskip

    Finally, the functions $\{A_i\}_{i=1}^n$ define a cotranslation of $F_n$ on the $2n$-ary tree $L^*$.
    }
\end{example}

It should be emphasized that every time that a group $G$ acts on the binary tree through automorphisms, it induces a unique continuous action of $G$ on the topological Cantor space $\textbf{2}^{\mathbb{N}} $ \cite[p. 2]{Nekra}. In the same fashion, all the examples provided here for cotranslations of free groups can be translated from the category of rooted trees to the category of topological spaces.

\smallskip

In the reminder of this section we delve into the problem of defining new cotranslations from others previously known. For this purpose, we leverage the concept of free product of groups \cite[Chapter 2]{Harpe}. Particularly, we highlight that given two groups, with presentation $G= \langle\, S\, |\, R\, \rangle$ and $H=\langle\, T\, |\, Q\, \rangle$, their free product admits the presentation $G\ast H= \langle\, S\cup T\, |\, R\cup Q\, \rangle$. \cite[Chapter 4]{Lyndon}. For instance, we have $D_\infty=C_2\ast C_2$ and $F_2=\mathbb{Z}\ast \mathbb{Z}$.

\smallskip

Furthermore, the free product $G\ast H$ supports two canonical injective maps $\iota_G: G\rightarrow G\ast H$ and $\iota_H:H\rightarrow G\ast H$, and from its universal property \cite{Harpe}, we have that given a group $K$ and two morphisms $\phi_G: G \rightarrow K$, $\phi_H: H \rightarrow K$, there is a unique morphism $\phi: G\ast H \rightarrow K$ such that $\phi_G= \phi \circ \iota_G$ and $\phi_H= \phi \circ \iota_H$.

\smallskip

In particular, if we consider $K=G$, we denote by $\pi_G: G\ast H \rightarrow G$ the unique morphism such that $\Id_G= \pi_G \circ \iota_G$ and $\rho_{G,H} = \pi_G \circ \iota_H$, where $\rho_{G,H}$ denotes the trivial morphism from $G$ to $H$. The map $\pi_H$ is defined analogously when $K=H$.

\smallskip

Let us now establish a result regarding cotranslations of free product groups.

\begin{proposition}
    Let $G= \langle\, S\, |\, R\, \rangle$, $H=\langle\, T\, |\, Q\, \rangle$ be finitely presented groups and $X$ a topological space. According to Proposition \ref{460}, let us suppose the functions $\{A_s:G \rightarrow \mathbb{A}_X\}_{s\in S}$ and $\{A_t:H \rightarrow \mathbb{A}_X\}_{t\in T}$ define cotranslations of $G$ and $H$ respectively on $X$. Now, for every $s\in S$ and $t\in T$, let us define $\tilde{A}_s: G\ast H \rightarrow \mathbb{A}_X$, $\tilde{A}_t: G\ast H \rightarrow \mathbb{A}_X$ by $\tilde{A}_s(w)=A_s(\pi_G(w))$ and $\tilde{A}_t(w)=A_t(\pi_H(w))$ for every element $w$. 

    \smallskip
    
    Then, the functions $\{\tilde{A}_s\}_{s \in S}$ and $\{\tilde{A}_t\}_{t\in T}$ define a cotranslation of $G\ast H$ on $X$.
\end{proposition}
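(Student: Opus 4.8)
The plan is to verify directly the hypothesis of Proposition \ref{460}: I must show that the collection $\{\tilde{A}_s\}_{s\in S}\cup\{\tilde{A}_t\}_{t\in T}$, indexed by the generating set $S\cup T$ of $G\ast H=\langle\, S\cup T\, |\, R\cup Q\, \rangle$, preserves the relations $R\cup Q$. The structural observation driving everything is that each relation of the free product lives entirely in one factor: a word $p\in R\cup Q$ either belongs to $R$, in which case every letter occurring in it lies in $S$, or it belongs to $Q$, in which case every letter lies in $T$. This lets me treat the two families of relations separately and reduce each to the relation-preservation that is already assumed for $G$ and for $H$.

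First I would fix a relation $p\in R$ and an arbitrary $\eta\in G\ast H$, and write out the ordered composition appearing in the definition of relation-preservation. Because all letters $\xi_{j_p(\ell)}$ of $p$ lie in $S$, the only maps involved are the $\tilde{A}_s=A_s\circ\pi_G$. Using that $\pi_G$ is a group morphism and that it restricts to the identity on $S$ (a consequence of $\Id_G=\pi_G\circ\iota_G$), I compute
$$\pi_G\left(\left[\prod_{\ell=1}^{k-1}\xi_{j_p(\ell)}\right]\eta\right)=\left[\prod_{\ell=1}^{k-1}\xi_{j_p(\ell)}\right]\pi_G(\eta),$$
so that, setting $\eta':=\pi_G(\eta)\in G$, the composition collapses factor-by-factor (the order of composition is untouched, only each argument is projected) to
$$\prod_{k=1}^{|p|}A_{j_p(k)}\left(\left[\prod_{\ell=1}^{k-1}\xi_{j_p(\ell)}\right]\eta'\right),$$
which equals $\Id_X$ precisely because $\{A_s\}_{s\in S}$ preserves the relation $p$ of $G$ at the point $\eta'$ (and this holds for every element of $G$, in particular for $\eta'$). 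The argument for $p\in Q$ is identical after replacing $\pi_G$ by $\pi_H$ and invoking the relation-preservation of $\{A_t\}_{t\in T}$. Having checked both families of relations, Proposition \ref{460} assembles the maps $\{\tilde{A}_s\}_{s\in S}\cup\{\tilde{A}_t\}_{t\in T}$ into a cotranslation of $G\ast H$ on $X$.

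I do not anticipate a genuine obstacle here; the content is essentially the functoriality of relation-preservation under the canonical projections. The only points demanding care are the bookkeeping of the ordered products and the fact that $\pi_G$ acts as the identity on the generators of its factor while sending the generators of the other factor to the unit — which is exactly the universal-property characterization of $\pi_G$ and $\pi_H$ recalled just before the statement. Note that I never need the ``killing'' property for the case $p\in R$, since $\eta'=\pi_G(\eta)$ ranges harmlessly over $G$; this keeps the computation clean. Finally, continuity of each $\tilde{A}_s,\tilde{A}_t$ is inherited from that of $A_s,A_t$ composed with the continuous projections, so the construction also respects the topological setting of the statement.
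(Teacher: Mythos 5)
Your proposal is correct and follows essentially the same route as the paper's own proof: fix a relation in $R$ (respectively $Q$), project each argument through the morphism $\pi_G$ (respectively $\pi_H$) using that it fixes the generators of its own factor, and reduce the ordered composition to the relation-preservation hypothesis for $\{A_s\}_{s\in S}$ (respectively $\{A_t\}_{t\in T}$) at the point $\pi_G(\eta)\in G$. Your additional remarks --- that the ``killing'' of the other factor's generators is never needed, and that continuity is inherited --- are accurate refinements of the same argument, not a different one.
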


\begin{proof}
     It is enough to verify that functions $\{\tilde{A}_s\}_{s \in S}$ and $\{ \tilde{A}_t\}_{t\in T}$ preserve the relations of $G\ast H$.

   \smallskip

    Let $r=s_n\dots s_2s_1$ be a relation in $R$. By hypothesis, $$A_{s_n}(s_{n-1}\dots s_1g)\circ \dots \circ A_{s_2}(s_1g)\circ A_{s_1}(g) = \Id_X,$$ 
    for every $g\in G$. Now, consider an arbitrary $w\in G\ast H$. Since $\pi_G(s)=s$ for every $s\in S$, we have
    \begin{align*}
    \tilde{A}_{s_n}(s_{n-1}\dots s_1w)\circ \dots \circ \tilde{A}_{s_1}(w) &= A_{s_n}(\pi_G(s_{n-1}\dots s_1w))\circ \dots \circ A_{s_1}(\pi_G(w))\\
    &= A_{s_n}(s_{n-1}\dots s_1\pi_G(w))\circ \dots\circ A_{s_1}(\pi_G(w))\\
    &=  \Id_X,
\end{align*}
    because $\pi_G(w)$ is an element of $G$. Since $r$ is an arbitrary relation in $R$, we conclude that functions $\{\tilde{A}_s\}_{s \in S}$ and $\{\tilde{A}_t\}_{t\in T}$ preserve every relation in $R$. Likewise, we can conclude that $\{\tilde{A}_s\}_{s \in S}$ and $\{\tilde{A}_t\}_{t\in T}$ respect every relation in $Q$. 
\end{proof}

To conclude this section, let us develop one last observation.

\smallskip

Let $G$ be a group provided by the presentation $\langle\, S\, |\, R\, \rangle$. Suppose that there is a cotranslation $Z$ of $G$ on some space $X$, defined by functions $\{ A_s:G\rightarrow \mathbb{A}_X \}_{s\in S}$. If we consider a non-void subset $R_0$ of $R$, it is possible to define the group $H=\langle\, S\, |\, R_0\, \rangle$. There exist a canonical morphism $\pi: H\rightarrow G$ such that $\pi(s)=s$ for each generator $s$.  It is clear that $\pi$ is a surjective map, therefore $G$ is isomorphic to $H/\ker(\pi)$. Moreover, we can define the map $\tilde{A}_s:= A_s\circ \pi$ for all $s$ in $S$. 

\smallskip

If we consider a relation $r=s_n\dots s_2s_1$ in $R_0$, then we have
$$A_{s_n}(s_{n-1}\dots s_1g)\circ \dots \circ A_{s_2}(s_1g) \circ A_{s_1}(g) = \Id_X,$$
for all $g$ in $G$, because the family $\{A_s\}_{s\in S}$ preserves the relations $R$. Thus, for an arbitrary element $h$ in $H$, we have
\begin{align*}
    \tilde{A}_{s_n}(s_{n-1}\dots s_1h)\circ \dots \circ \tilde{A}_{s_2}(s_1h) \circ \tilde{A}_{s_1}(h) &= A_{s_n}(\pi(s_{n-1}\dots s_1h))\circ \dots \circ A_{s_2}(\pi(s_1h)) \circ A_{s_1}(\pi(h)) \\
    &= A_{s_n}(s_{n-1}\dots s_1\pi(h))\circ \dots \circ A_{s_2}(s_1\pi(h)) \circ A_{s_1}(\pi(h)) \\
    &= \Id_X,
\end{align*}
due to $\pi(h)$ being in $G$. As $r$ is an arbitrary relation in $R_0$, we conclude that the functions $\{ \tilde{A}_s:H \rightarrow \mathbb{A}_X \}_{s\in S}$ preserve the relations of $H$. In conclusion, this defines a cotranslation of $H$ on the space $X$.

\smallskip

From this observation, we conclude the following particular claim: If we consider the groups $D_6=\langle\, r,s\, |\, r^3=s^2=(sr)^2=e\, \rangle$, $C_3\ast C_2 = \langle\, r,s\, |\, r^3=s^2=e\, \rangle$ and $F_2 = \langle\, r,s\, |\, \rangle$, we clearly have $\emptyset \subset \{r^3, s^2\}\subset \{r^3, s^2, (sr)^2\}$. Therefore,  all cotranslations of $D_6$ on some space $X$ are inherited in this manner to $C_3\ast C_2$, in the same way that all cotranslation of $C_3\ast C_2$ on $X$ are inherited to $F_2$.

\section{Differentiable cotranslations on Banach spaces}
In this section we study the differentiability of cotranslations and analyze their relation to differential equations. During this section we fix a Banach space $X$ over the field $\mathbb{K}$, which can be $\mathbb{R}$ or $\mathbb{C}$. We set
\begin{itemize}
\item $\mathbb{L}_X$ the collection of all densely defined linear operators on $X$,
    \item $\mathbb{B}_X$ the algebra of all continuous elements of $\mathbb{L}_X$, given the strong topology,
    \item $\mathbb{A}_X$ the group of all invertible elements of $\mathbb{B}_X$ whose inverse is also in $\mathbb{B}_X$. 
\end{itemize}

The group associated with the cotranslations studied in this section is the field $\mathbb{K}$. Nevertheless, the formalism presented here is versatile, enabling generalizations of the results to other non-commutative Lie groups. We use Banach spaces so that $\mathbb{B}_X$ has the algebra structure, thus we are able to study derivatives for groupoid morphisms through limits. 

\smallskip

    Consider $\varphi:\mathbb{K}\rimes\mathbb{K}\to \mathbb{B}_X$ and $\psi:\mathbb{K}\to \mathbb{B}_X$. In this section we use the following notation (independently if these limits exist or not):
    \begin{itemize}
        \item [i)] $\partial_1\varphi (r,t)=\lim_{h\to 0}\frac{\varphi(r+h,t)-\varphi(r,t)}{h}$,
        \item [ii)] $\partial_2\varphi (r,t)=\lim_{h\to 0}\frac{\varphi(r,t+h)-\varphi(r,t)}{h}$,
        \item [iii)] $\frac{d}{du}\left[\psi(u)\right]=\lim_{h\to 0}\frac{\psi(u+h)-\psi(u)}{h}$.
    \end{itemize}

\smallskip

Notation iii) proves useful when applying derivatives to functions obtained as compositions of other functions or when emphasizing the {\it variable} with respect to which we intend to differentiate. On the other hand, notations i) and ii) are designed to underscore the {\it position} of the variable with respect to which we seek differentiation.We begin by stating basic results .

\begin{lemma}\label{153}
    Let $Z:\mathbb{K}\rimes \mathbb{K}\to \mathbb{A}_X$ be a continuous cotranslation. If for every $t\in \mathbb{K}$ the map $r\mapsto Z(r,t)$ is derivable at $r=r_0$, for some $r_0\in \mathbb{K}$, then the map $r\mapsto Z^{\inv}(r,t)$ is derivable at $r=r_0$ and
    $$\partial_1Z^\inv(r_0,t)=-Z^{\inv}(r_0,t)\Big[\partial_1Z(r_0,t)\Big] Z^{\inv}(r_0,t).$$
\end{lemma}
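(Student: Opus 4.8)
The plan is to bypass a naive product-rule argument (which would presuppose the very differentiability we wish to establish) and instead work from an exact algebraic identity for the inverse at nearby points, passing to the limit only at the end. Throughout I fix $t\in\mathbb{K}$ and abbreviate $Z(r):=Z(r,t)$ and $Z^{\inv}(r):=[Z(r,t)]^{-1}$. The heuristic to keep in mind is that differentiating the identity $Z(r)Z^{\inv}(r)=\Id$ formally yields $[\partial_1 Z]\,Z^{\inv}+Z\,[\partial_1 Z^{\inv}]=0$, hence the claimed formula; the real task is to make this rigorous while simultaneously proving that the derivative exists.

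First I would record the exact identity
$$Z^{\inv}(r_0+h)-Z^{\inv}(r_0)=Z^{\inv}(r_0+h)\bigl[Z(r_0)-Z(r_0+h)\bigr]Z^{\inv}(r_0),$$
which holds for every admissible $h$ and is verified by expanding the right-hand side and cancelling via $Z(r)Z^{\inv}(r)=\Id$, applied once at $r_0+h$ and once at $r_0$. Dividing by $h\neq 0$ gives
$$\frac{Z^{\inv}(r_0+h)-Z^{\inv}(r_0)}{h}=Z^{\inv}(r_0+h)\left[\frac{Z(r_0)-Z(r_0+h)}{h}\right]Z^{\inv}(r_0).$$
On the right-hand side the bracketed difference quotient converges to $-\partial_1 Z(r_0,t)$ by the hypothesis, and the trailing factor $Z^{\inv}(r_0)$ is constant, so everything reduces to understanding the leading factor $Z^{\inv}(r_0+h)$.

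The crux — the only step that is not purely formal — is to control $Z^{\inv}(r_0+h)$ as $h\to 0$. Here I would first note that differentiability of $r\mapsto Z(r)$ at $r_0$ forces continuity there, since $Z(r_0+h)-Z(r_0)=h\cdot\tfrac{Z(r_0+h)-Z(r_0)}{h}\to 0$. Then, invoking that in the Banach algebra $\mathbb{B}_X$ the set of invertible elements is open and inversion is continuous on it (locally expressible through a convergent Neumann series), I conclude $Z^{\inv}(r_0+h)\to Z^{\inv}(r_0)$. Finally, joint continuity of multiplication — a consequence of submultiplicativity of the norm together with the boundedness of the convergent factors — lets me pass to the limit in the displayed quotient, yielding that $\partial_1 Z^{\inv}(r_0,t)$ exists and equals $-Z^{\inv}(r_0,t)\bigl[\partial_1 Z(r_0,t)\bigr]Z^{\inv}(r_0,t)$, as claimed.
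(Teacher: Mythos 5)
Your proof is correct, and it takes a genuinely different — and more careful — route than the paper. The paper's own proof is exactly the ``naive product-rule argument'' you set out to avoid: it differentiates the identity $Z(r,t)Z^{\inv}(r,t)=\Id$ term by term, solves for $\partial_1 Z^{\inv}$, and then simply asserts that ``it is easy to see that the derivatives $\partial_1$ at $r=r_0$ exist simultaneously'' — the existence claim, which is the actual content of the lemma, is never argued. Your proof supplies precisely this missing step: the exact resolvent-type identity
$$Z^{\inv}(r_0+h)-Z^{\inv}(r_0)=Z^{\inv}(r_0+h)\bigl[Z(r_0)-Z(r_0+h)\bigr]Z^{\inv}(r_0)$$
lets you exhibit the difference quotient of $Z^{\inv}$ as a product of three factors whose limits you control independently, so existence of $\partial_1 Z^{\inv}(r_0,t)$ is \emph{proved}, not presumed, and the formula drops out in the same breath. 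What the paper's approach buys is brevity and a derivation that works verbatim for the symmetric statement (Lemma \ref{171}); what yours buys is rigor, since the product rule is only licensed once both factors are known differentiable. One caveat worth flagging, though it afflicts the paper at least as much as you: your continuity-of-inversion step (openness of the invertibles, Neumann series, submultiplicativity) lives in the \emph{norm} topology on $\mathbb{B}_X$, whereas the paper formally equips $\mathbb{B}_X$ with the strong topology, in which inversion is not continuous in general; since the paper manipulates its difference quotients as if in a Banach algebra under the norm anyway, your argument is faithful to the intended setting, but you should state explicitly that your limits are taken in operator norm.
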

\begin{proof}
    By the following
    \begin{eqnarray*}
        Z(r,t) Z^\inv(r,t)=\Id&\Rightarrow&\frac{d}{dr}\left[Z(r,t)Z^\inv(r,t)\right]=0\\
        &\Leftrightarrow&\Big[\partial_1Z(r,t)\Big]\ Z^\inv(r,t)+Z(r,t) \left[\partial_1Z^\inv(r,t)\right]=0\\
        &\Leftrightarrow&\partial_1 Z^\inv(r,t)=-Z^\inv(r,t) \Big[\partial_1 Z(r,t)\Big] Z^\inv(r,t),
    \end{eqnarray*}
    it is easy to see that the derivatives $\partial_1$ at $r=r_0$ exist simultaneously for both $r\mapsto Z(r,t)$ and $r\mapsto Z^\inv(r,t)$.
\end{proof}

The following result is obtained with an analogous proof.

\begin{lemma}\label{171}
       Let $Z:\mathbb{K}\rimes \mathbb{K}\to \mathbb{A}_X$ be a continuous cotranslation. If for every $r\in \mathbb{K}$ the map $t\mapsto Z(r,t)$ is derivable at $t_0$, for some $t_0\in \mathbb{K}$, then the map $t\mapsto Z^\inv(r,t)$ is derivable at $t=t_0$ and
       $$\partial_2Z^\inv(r,t_0)=-Z^{\inv}(r,t_0)\Big[\partial_2Z(r,t_0)\Big] Z^{\inv}(r,t_0).$$
\end{lemma}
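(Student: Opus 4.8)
The plan is to mirror exactly the argument of Lemma \ref{153}, exchanging the roles of the two slots. The statement is the $\partial_2$-analogue of the previous lemma, and since nothing in that proof used any special feature of the first slot beyond the product rule and the identity $Z(r,t)Z^\inv(r,t)=\Id$, the same computation goes through verbatim with $\partial_1$ replaced by $\partial_2$ and the fixed/variable roles of $r$ and $t$ swapped.

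First I would fix $r\in\mathbb{K}$ and regard everything as a function of $t$ near $t_0$. The starting point is the pointwise identity $Z(r,t)Z^\inv(r,t)=\Id$, valid for all $t$ because $Z$ takes values in $\mathbb{A}_X$ and $Z^\inv(r,t)=[Z(r,t)]^{-1}$ by definition. Differentiating both sides with respect to $t$ (using notation iii), which here coincides with $\partial_2$) and applying the product rule in the Banach algebra $\mathbb{B}_X$ gives
$$\Big[\partial_2Z(r,t)\Big]\,Z^\inv(r,t)+Z(r,t)\,\Big[\partial_2Z^\inv(r,t)\Big]=0.$$
Solving this for $\partial_2Z^\inv$ by left-multiplying with $Z^\inv(r,t)$ yields
$$\partial_2 Z^\inv(r,t)=-Z^\inv(r,t)\,\Big[\partial_2 Z(r,t)\Big]\,Z^\inv(r,t),$$
which is the claimed formula once specialized to $t=t_0$.

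The one point needing a little care is the logical direction: the hypothesis only asserts derivability of $t\mapsto Z(r,t)$ at $t_0$, so I must argue that $t\mapsto Z^\inv(r,t)$ is itself derivable there before writing its derivative. As in Lemma \ref{153}, the cleanest way is to note that the product rule computation is reversible: the existence of $\partial_2Z(r,t_0)$ together with continuity of multiplication and inversion in $\mathbb{A}_X$ forces the difference quotients of $t\mapsto Z^\inv(r,t)$ to converge, so the two derivatives $\partial_2Z$ and $\partial_2Z^\inv$ exist simultaneously at $t=t_0$. I expect no genuine obstacle here, as the sole mild subtlety is justifying that the derivatives appear together rather than assuming $\partial_2Z^\inv$ exists a priori; this is handled exactly as in the proof of Lemma \ref{153}, and the paper indeed simply asserts that the result follows by an analogous proof.
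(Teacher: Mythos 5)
Your proposal is correct and follows exactly the route the paper intends: the paper's entire proof of Lemma \ref{171} is the remark that it is ``obtained with an analogous proof'' to Lemma \ref{153}, namely differentiating $Z(r,t)Z^\inv(r,t)=\Id$ in the second slot via the product rule and solving for $\partial_2 Z^\inv$. Your extra care about the logical direction (writing the difference quotient of $Z^\inv$ as $-Z^\inv(r,t_0+h)\,\frac{Z(r,t_0+h)-Z(r,t_0)}{h}\,Z^\inv(r,t_0)$ and using continuity of inversion) is exactly the justification the paper leaves implicit, so there is nothing to add.
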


\begin{lemma}
    Let $Z:\mathbb{K}\rimes \mathbb{K}\to \mathbb{A}_X$ be a continuous cotranslation. Suppose that for every $t\in\mathbb{K}$ the map $r\mapsto Z(r,t)$ is derivable at $r=0$. Then, for every $t\in \mathbb{K}$ the function $r\mapsto Z(r,t)$ is derivable and
    $$\partial_1 Z(r,t)=\Big[\partial_1 Z(0,t+r)\Big] Z(r,-r)-Z(r,t)\Big[\partial_1Z(0,r)\Big] Z(r,-r).$$
\end{lemma}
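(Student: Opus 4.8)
The plan is to reduce the computation of $\partial_1 Z(r,t)$ to data available at first argument $0$, by using the cotranslation relation to push the increment $h$ into a first slot. Recall that for $G=\mathbb{K}$ the defining identity reads $Z(r,t+s)=Z(r+s,t)\,Z(r,s)$. Writing it with first variable $h$ and increment $r$, namely $Z(h,t+r)=Z(h+r,t)\,Z(h,r)$, and solving for $Z(h+r,t)=Z(r+h,t)$, gives the key identity
$$Z(r+h,t)=Z(h,t+r)\,Z^{\inv}(h,r),$$
valid for every $h$; its value at $h=0$ is the representation $Z(r,t)=Z(0,t+r)\,Z(r,-r)$, where I used $Z^{\inv}(0,r)=Z(r,-r)$. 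The decisive feature is that the variable $h$ now occurs only in the first slot of $Z(h,t+r)$ and of $Z^{\inv}(h,r)$, precisely the variable in which differentiability at $0$ is granted by hypothesis.

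First I would form the difference quotient and split it by the usual product rule for maps into the Banach algebra $\mathbb{B}_X$:
$$\frac{Z(r+h,t)-Z(r,t)}{h}=\frac{Z(h,t+r)-Z(0,t+r)}{h}\,Z^{\inv}(h,r)+Z(0,t+r)\,\frac{Z^{\inv}(h,r)-Z^{\inv}(0,r)}{h}.$$
As $h\to 0$, continuity of $Z$ (hence of inversion on $\mathbb{A}_X$) gives $Z^{\inv}(h,r)\to Z^{\inv}(0,r)$, while the first difference quotient converges to $\partial_1 Z(0,t+r)$, which exists by hypothesis since $t+r$ is a fixed second argument. For the second difference quotient I would invoke Lemma \ref{153} with $r_0=0$: because $h\mapsto Z(h,r)$ is derivable at $0$, so is $h\mapsto Z^{\inv}(h,r)$, with derivative $\partial_1 Z^{\inv}(0,r)=-Z^{\inv}(0,r)\big[\partial_1 Z(0,r)\big]Z^{\inv}(0,r)$. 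In particular the limit of the whole quotient exists, which simultaneously proves that $r\mapsto Z(r,t)$ is derivable at every $r$.

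Passing to the limit yields
$$\partial_1 Z(r,t)=\big[\partial_1 Z(0,t+r)\big]Z^{\inv}(0,r)-Z(0,t+r)\,Z^{\inv}(0,r)\big[\partial_1 Z(0,r)\big]Z^{\inv}(0,r),$$
and it remains only to rewrite this in the stated form using the two algebraic identities $Z^{\inv}(0,r)=Z(r,-r)$ and $Z(0,t+r)\,Z^{\inv}(0,r)=Z(r,t)$ (the $h=0$ case above); substituting them into the first and second terms respectively gives exactly $\partial_1 Z(r,t)=\big[\partial_1 Z(0,t+r)\big]Z(r,-r)-Z(r,t)\big[\partial_1 Z(0,r)\big]Z(r,-r)$. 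I expect the main obstacle to be conceptual rather than computational: one must resist expressing $Z(r,t)=Z(0,t+r)Z(r,-r)$ and differentiating directly, since that would require the second-slot derivative $\partial_2 Z(0,\cdot)$, which is \emph{not} assumed to exist; the rearranged identity is designed exactly so that every derivative appearing is a first-slot derivative at $0$. A minor point needing care is the justification that the product-rule splitting genuinely establishes existence of the limit, for which continuity of inversion in $\mathbb{A}_X$ under the strong topology is used.
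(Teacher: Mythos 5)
Your proof is correct and follows essentially the same route as the paper's: the identical key identity $Z(r+h,t)=Z(h,t+r)\,Z^{\inv}(h,r)$, the same splitting of the difference quotient, the same appeal to Lemma \ref{153} at $r_0=0$, and the same final rewriting via $Z^{\inv}(0,r)=Z(r,-r)$ and $Z(0,t+r)Z^{\inv}(0,r)=Z(r,t)$. Your explicit note that $Z^{\inv}(h,r)\to Z^{\inv}(0,r)$ as $h\to 0$ is a small justification the paper leaves implicit, not a different argument.
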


\begin{proof}
We suppose the following limit exists
$$\lim_{h\to 0}\frac{Z(0+h,t)-Z(0,t)}{h}.$$

We have
\begin{eqnarray*}
    \frac{Z(r+h,t)-Z(r,t)}{h}&=&\frac{Z(h,t+r)Z^\inv(h,r)-Z(0,r+t)Z^\inv(0,r)}{h}\\
    \\&=&\frac{Z(h,t+r)-Z(0,r+t)}{h}Z^\inv(h,r)\\
   \\&&+Z(0,r+t)\frac{Z^\inv(h,r)-Z^\inv(0,r)}{h}.
\end{eqnarray*}

Both addends at the right hand side have limit when $h\to 0$ by hypothesis (and Lemma \ref{153}), thus the left hand side as limit as well, hence $r\mapsto Z(r,t)$ is derivable at every point for every fixed $t\in\mathbb{K}$ and
\begin{eqnarray*}
    \partial_1Z(r,t)&=&\Big[\partial_1Z(0,t+r)\Big]Z^\inv(0,r)+Z(0,r+t)\Big[\partial_1Z^{\inv}(0,r)\Big]\\
    &=&\Big[\partial_1Z(0,t+r)\Big]Z(r,-r)-Z(0,r+t)Z(r,-r)\Big[\partial_1Z(0,r)\Big]Z(r,-r)\\
    &=&\Big[\partial_1Z(0,t+r)\Big]Z(r,-r)-Z(r,t)\Big[\partial_1Z(0,r)\Big]Z(r,-r).
\end{eqnarray*}
\end{proof}

Analogously we have the following result.

\begin{lemma}\label{165}
    Let $Z:\mathbb{K}\rimes\mathbb{K}\to \mathbb{A}_X$ be a continuous cotranslation. Suppose that for every $r\in\mathbb{K}$ the function $t\mapsto Z(r,t)$ is derivable at $t=0$. Then, for every $r\in \mathbb{K}$ the function $t\mapsto Z(r,t)$ is derivable and
    $$\partial_2Z(r,t)=\Big[\partial_2Z(r+t,0)\Big]Z(r,t).$$
\end{lemma}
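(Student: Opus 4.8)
The plan is to imitate the previous lemma, but now varying the second argument. The decisive observation is that, when we differentiate in the second slot, the cotranslation identity factors $Z(r,t)$ out on the right, which makes this case cleaner than the $\partial_1$ case: no appeal to the inverse-derivative Lemma \ref{171} will be needed, and a single difference quotient suffices.

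First I would record the cotranslation identity $Z(g,k\cdot m)=Z(m\cdot g,k)\,Z(g,m)$ in additive notation for the abelian group $\mathbb{K}$, namely $Z(g,k+m)=Z(m+g,k)\,Z(g,m)$, and specialize it with $g=r$, $k=h$, $m=t$ to obtain the key factorization
\[
Z(r,t+h)=Z(r+t,h)\,Z(r,t),\qquad\forall\,r,t,h\in\mathbb{K}.
\]
I would also recall that $Z(r+t,0)=\Id$, since groupoid morphisms preserve units and the only unit in $\mathbb{A}_X$ is the identity; this is exactly the fact that will let me turn the difference quotient into one based at the point $r+t$.

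Then I would form the difference quotient and substitute the factorization:
\begin{align*}
\frac{Z(r,t+h)-Z(r,t)}{h}
&=\frac{Z(r+t,h)\,Z(r,t)-Z(r,t)}{h}\\
&=\frac{Z(r+t,h)-Z(r+t,0)}{h}\,Z(r,t),
\end{align*}
where the last equality uses $Z(r+t,0)=\Id$. By hypothesis the map $s\mapsto Z(r+t,s)$ is derivable at $s=0$, so the left factor converges to $\partial_2 Z(r+t,0)$ as $h\to 0$. Since $Z(r,t)\in\mathbb{A}_X\subset\mathbb{B}_X$ is a fixed bounded operator and right-multiplication by it is continuous, the product converges; hence $t\mapsto Z(r,t)$ is derivable at every $t$ and
\[
\partial_2 Z(r,t)=\big[\partial_2 Z(r+t,0)\big]\,Z(r,t),
\]
as claimed.

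The only points requiring care are the correct specialization of the cotranslation identity, so that the increment $h$ lands in the second slot of a factor based at $r+t$ while $Z(r,t)$ stays intact on the right, and the remark that, in contrast with the preceding lemma, the decomposition contains no inverse and hence needs neither a product rule nor Lemma \ref{171}. Beyond this bookkeeping I expect no genuine obstacle.
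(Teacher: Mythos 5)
Your proof is correct and follows the paper's argument exactly: the same specialization of the cotranslation identity to $Z(r,t+h)=Z(r+t,h)\,Z(r,t)$, the same replacement of $\Id$ by $Z(r+t,0)$, and the same passage to the limit in the difference quotient with $Z(r,t)$ factored out on the right. Your added remarks on continuity of right multiplication and on why no inverse or product rule is needed are harmless elaborations of what the paper leaves implicit.
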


\begin{proof}
    Suppose that for every $r\in \mathbb{K}$ the following limit exists
    $$\lim_{h\to 0}\frac{Z(r,h)-Z(r,0)}{h}.$$

    We have
    \begin{eqnarray*}
        \frac{Z(r,t+h)-Z(r,t)}{h}&=&\frac{Z(r+t,h)Z(r,t)-Z(r,t)}{h}\\
        &=&\frac{Z(r+t,h)-\Id}{h}Z(r,t)\\
        &=&\frac{Z(r+t,h)-Z(r+t,0)}{h}Z(r,t).
    \end{eqnarray*}

By taking limits $h\to 0$ we obtain the desired identity.
\end{proof}

Now, we present a result demonstrating how to deduce derivability with respect to one coordinate when we have information about the other.

\begin{lemma}\label{147}
    Let $Z:\mathbb{K}\rimes\mathbb{K}\to \mathbb{A}_X$ be a continuous cotranslation such that for every $r\in \mathbb{K}$ the map $t\mapsto Z(r,t)$ is derivable. Then, the map $r\mapsto Z(r,t)$ is derivable for every $t\in \mathbb{K}$ and
    $$\partial_1Z(r,t)=\partial_2Z(r,t)-Z(r,t) \Big[\partial_2Z(r,0)\Big].$$
\end{lemma}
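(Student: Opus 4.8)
The plan is to recast the difference quotient of $r\mapsto Z(r,t)$ entirely in terms of increments in the \emph{second} variable, where derivability is available by hypothesis, and then pass to the limit in the Banach algebra $\mathbb{B}_X$. The engine for this is the defining cotranslation identity specialized to the additive group $\mathbb{K}$, namely $Z(r,t+h)=Z(r+h,t)\,Z(r,h)$, together with the normalization $Z(r,0)=\Id$. Solving the identity for $Z(r+h,t)$ yields $Z(r+h,t)=Z(r,t+h)\,Z^{\inv}(r,h)$, which converts a shift in the first argument into a shift in the second; this is the key algebraic step.

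Next I would form the difference quotient and split it by adding and subtracting $Z(r,t)\,Z^{\inv}(r,h)$, using $Z^{\inv}(r,0)=\Id$:
\begin{align*}
\frac{Z(r+h,t)-Z(r,t)}{h}
&=\frac{Z(r,t+h)-Z(r,t)}{h}\,Z^{\inv}(r,h)\\
&\quad+Z(r,t)\,\frac{Z^{\inv}(r,h)-Z^{\inv}(r,0)}{h}.
\end{align*}
Then I would let $h\to 0$. The first quotient tends to $\partial_2 Z(r,t)$ by the hypothesis that $t\mapsto Z(r,t)$ is derivable, and $Z^{\inv}(r,h)\to \Id$ by continuity of $Z$ (hence of $Z^{\inv}$). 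For the second term, the quotient converges to $\partial_2 Z^{\inv}(r,0)$, and by Lemma~\ref{171} this equals $-Z^{\inv}(r,0)\big[\partial_2 Z(r,0)\big]Z^{\inv}(r,0)=-\partial_2 Z(r,0)$, since $Z^{\inv}(r,0)=\Id$. As both summands on the right possess limits, the left-hand side converges as well, which simultaneously establishes that $r\mapsto Z(r,t)$ is derivable and gives
$$\partial_1 Z(r,t)=\partial_2 Z(r,t)-Z(r,t)\big[\partial_2 Z(r,0)\big].$$

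The delicate point—rather than any single hard step—is the passage to the limit in the product: I must justify that the limit of each product is the product of the limits in $\mathbb{B}_X$. This rests on two facts already secured earlier: continuity of $Z$ guarantees $Z^{\inv}(r,h)\to\Id$, so the first factor behaves well, and Lemma~\ref{171} guarantees that $t\mapsto Z^{\inv}(r,t)$ is itself derivable at $t=0$, so the second quotient genuinely converges. Once these are in hand, the argument is a direct mirror of the computations in Lemmas~\ref{153} and~\ref{165}, and no further estimates are needed.
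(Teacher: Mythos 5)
Your proof is correct and follows essentially the same route as the paper: both hinge on the cotranslation identity $Z(r,t+h)=Z(r+h,t)\,Z(r,h)$ and the same add-and-subtract splitting of the difference quotient, with the hypothesis on $\partial_2$ supplying the limits. The only (cosmetic) difference is that you move $Z(r,h)$ to the other side as $Z^{\inv}(r,h)$ at the outset and invoke Lemma~\ref{171} for $\partial_2 Z^{\inv}(r,0)=-\partial_2 Z(r,0)$, which makes explicit the multiplication by $Z^{\inv}(r,h)\to\Id$ that the paper's proof compresses into ``reorganizing terms and taking limits.''
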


\begin{proof}
    Suppose that for every $r,t\in \mathbb{K}$ the following limit exists
    $$\lim_{h\to 0}\frac{Z(r,t+h)-Z(r,t)}{h}.$$
    
    Note that
     \begin{eqnarray*}
        \frac{Z(r,t+h)-Z(r,t)}{h}&=&\frac{Z(r+h,t)Z(r,h)-Z(r,t)}{h} \\
        \\&=&\frac{Z(r+h,t)Z(r,h)-Z(r,t)Z(r,h)}{h}+\frac{Z(r,t)Z(r,h)-Z(r,t)}{h} \\
        \\&=&\frac{Z(r+h,t)-Z(r,t)}{h}Z(r,h)+Z(r,t)\frac{Z(r,h)-Z(r,0)}{h}.
    \end{eqnarray*}
    from where, reorganizing terms and taking limits we obtain
    \begin{eqnarray*}
        \lim_{h\to 0}\frac{Z(r+h,t)-Z(r,t)}{h}&=&\Big[\partial_2Z(r,t)\Big] Z(r,0)-Z(r,t)\Big[ \partial_2Z(r,0)\Big]\\
        \\&=&\partial_2Z(r,t)-Z(r,t)\Big[\partial_2Z(r,0)\Big].
    \end{eqnarray*}
\end{proof}

As a summary of the previous lemmas we state the following:

\begin{corollary}
    For a continuous cotranslation $Z: \mathbb{K}\rimes\mathbb{K}\to \mathbb{A}_X$, the following statements are equivalent:
    \begin{itemize}
        \item [i)] $t\mapsto Z(r,t)$ is derivable at $t=0$, for every $r\in \mathbb{K}$.
        \item [ii)] $t\mapsto Z(r,t)$ is derivable for every $r\in \mathbb{K}$.
    \end{itemize}

    Moreover, each one of them implies the following statements, which are equivalent:
    \begin{itemize}
        \item [iii)] $r\mapsto Z(r,t)$ is derivable at $r=0$, for every $t\in \mathbb{K}$.
        \item [iv)] $r\mapsto Z(r,t)$ is derivable for every $t\in \mathbb{K}$.
    \end{itemize}

\end{corollary}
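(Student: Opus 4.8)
The plan is to establish the corollary as a direct synthesis of the preceding lemmas, treating it as a bookkeeping exercise that assembles the four implications into the stated equivalence pattern. First I would observe that the equivalence of (i) and (ii) is essentially the content of Lemma \ref{165}: the backward direction (ii)$\Rightarrow$(i) is trivial since derivability for every $t$ in particular gives derivability at $t=0$, while the forward direction (i)$\Rightarrow$(ii) is exactly the hypothesis-to-conclusion passage of Lemma \ref{165}, which upgrades derivability of $t\mapsto Z(r,t)$ at the single point $t=0$ (for every $r$) to derivability at all $t$. So this half of the corollary requires no new work beyond citing that lemma and noting the trivial reverse implication.

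Next I would handle the equivalence of (iii) and (iv) by the same two-step reasoning applied to the first coordinate: (iv)$\Rightarrow$(iii) is immediate, and (iii)$\Rightarrow$(iv) is precisely the statement of the Lemma asserting that derivability of $r\mapsto Z(r,t)$ at $r=0$ (for every $t$) forces derivability at every $r$. Thus both internal equivalences are direct lemma invocations plus trivial restrictions. The only genuinely substantive link is the implication (ii)$\Rightarrow$(iii) (equivalently (i)$\Rightarrow$(iv), since the equivalences let me pick whichever endpoints are most convenient): here I would invoke Lemma \ref{147}, whose hypothesis is exactly (ii) --- that $t\mapsto Z(r,t)$ is derivable for every $r$ --- and whose conclusion is exactly (iv), that $r\mapsto Z(r,t)$ is derivable for every $t$, with the explicit formula $\partial_1 Z(r,t)=\partial_2 Z(r,t)-Z(r,t)\bigl[\partial_2 Z(r,0)\bigr]$ as a bonus.

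Assembling these, the logical skeleton is: (i)$\Leftrightarrow$(ii) by Lemma \ref{165} and triviality; (ii)$\Rightarrow$(iv) by Lemma \ref{147}; (iii)$\Leftrightarrow$(iv) by the corresponding first-coordinate lemma and triviality. Chaining gives that (i) or (ii) implies (iii) and (iv), exactly as claimed, while the two pairs are internally equivalent. The main obstacle, to the extent there is one, is purely expository: making sure the hypotheses of Lemma \ref{147} are read with the correct quantifier ordering (derivability of the \emph{second}-coordinate map at \emph{all} $t$, for every $r$) and confirming that the lemma does not silently assume more than (ii) provides. I do not expect any analytic difficulty, since every limit computation has already been carried out in the lemmas; the corollary is a statement about how those derivability conclusions interlock, and the only care needed is in not overstating the reverse direction --- the corollary deliberately asserts only that (i)--(ii) imply (iii)--(iv), not a full four-way equivalence, and I would make that asymmetry explicit rather than attempt to prove a converse that the lemmas do not supply.
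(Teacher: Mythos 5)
Your proposal is correct and matches the paper's intended argument exactly: the paper states this corollary ``as a summary of the previous lemmas'' with no separate proof, and the implicit route is precisely your assembly --- (i)$\Leftrightarrow$(ii) via Lemma \ref{165} plus the trivial restriction, (iii)$\Leftrightarrow$(iv) via the analogous first-coordinate lemma, and the connecting implication (ii)$\Rightarrow$(iv) via Lemma \ref{147}. Your closing caution about not overstating a four-way equivalence is also faithful to the statement, since the lemmas supply no converse from (iii)--(iv) back to (i)--(ii).
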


    From now on, we say that a cotranslation is \textbf{differentiable} if it verifies i) or ii) on the previous corollary. In the following we study the relation of differentiable cotranslations and linear nonautonomous differential equations. Even in the autonomous case, the existence of solutions is a non trivial problem when $X$ is infinite dimensional. An alternative is to look for solutions with a restricted domain as  $[0,\infty)$ or $[0,t]$ (we refer the reader to  \cite[Chapter 4]{Pazy}). 

\smallskip
On the other hand, the nonautonomous case presents even greater difficulties. On finite dimension, it is known that it is enough to ask $t\mapsto A(t)$ to be locally integrable in order to guarantee the existence and uniqueness of solutions. On infinite dimensional spaces, the problem is much harder. A partial result stays that if $t\mapsto A(t)\in \mathbb{B}_X$ is continuous under the uniform operator norm, then we have the existence and uniqueness of solutions defined on a bounded interval \cite[Theorem 5.1.1]{Pazy}.

\smallskip

In general, the problem of the existence and uniqueness of globally defined solutions is not fully resolved. In the concluding part of this section we address this issue by leveraging the structure of cotranslations.

\begin{proposition}\label{178}
Set $Z:\mathbb{K}\rimes\mathbb{K}\to \mathbb{A}_X$ a differentiable cotranslation. Define $A:\mathbb{K}\to \mathbb{L}_X$ and the operator $\Psi:\mathbb{K}^2\to \mathbb{A}_X$ by
$$A(u):=\partial_2Z(u,0),\qquad\Psi(u,v):=Z(v,u-v),$$ 
    then, the following are verified:
    \begin{itemize}
    \item [i)] $\Psi(u,v)\Psi(v,w)=\Psi(u,w)$ for every $u,v,w\in \mathbb{K}$,
        \item [ii)] $\frac{d\Psi}{du}=A(u)\Psi(u,v)$,
        \item [iii)] $\frac{d\Psi}{dv}=-\Psi(u,v)A(v)$,
        \item [iv)] for every $\xi\in X$, the map $\psi_{v,\xi}:\mathbb{K}\to X$, given by $\psi_{v,\xi}(u)=\left[\Psi(u,v)\right](\xi)$ verifies $\psi_{v,\xi}(v)=\xi$ and is a solution to the equation 
    \begin{equation}\label{170}
    \frac{dx}{du}=A(u)x(u).
    \end{equation}
    \end{itemize}
\end{proposition}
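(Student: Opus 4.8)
The plan is to reduce all four claims to the fundamental cotranslation identity together with Lemmas~\ref{165} and~\ref{147}, specialized to the abelian additive group $\mathbb{K}$. Written additively, the relation $Z(g,kh)=Z(hg,k)Z(g,h)$ becomes
$$Z(r,s+t)=Z(t+r,s)\,Z(r,t),$$
and I will use this repeatedly together with $Z(r,0)=\Id$.

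I would first establish (i) purely algebraically. Expanding $\Psi(u,v)\Psi(v,w)=Z(v,u-v)\,Z(w,v-w)$ and matching it to the right-hand side above with $r=w$, $t=v-w$ (so that $t+r=v$) and $s=u-v$, the identity collapses the product to $Z(w,u-w)=\Psi(u,w)$. No differentiability is needed here.

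For (ii), note that $\Psi(u,v)=Z(v,u-v)$ involves $u$ only through the second slot, so $\frac{d\Psi}{du}=\partial_2 Z(v,u-v)$; applying Lemma~\ref{165} and $v+(u-v)=u$ yields $\partial_2 Z(v,u-v)=\big[\partial_2 Z(u,0)\big]Z(v,u-v)=A(u)\Psi(u,v)$. For (iii), the variable $v$ enters both slots of $Z(v,u-v)$ with opposite signs, so the chain rule gives $\frac{d\Psi}{dv}=\partial_1 Z(v,u-v)-\partial_2 Z(v,u-v)$; substituting Lemma~\ref{147} in the form $\partial_1 Z(r,t)=\partial_2 Z(r,t)-Z(r,t)\big[\partial_2 Z(r,0)\big]$ makes the two $\partial_2$ contributions cancel, leaving $-Z(v,u-v)\big[\partial_2 Z(v,0)\big]=-\Psi(u,v)A(v)$. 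Finally (iv) is immediate: $\psi_{v,\xi}(v)=[\Psi(v,v)](\xi)=[Z(v,0)](\xi)=\xi$, and differentiating $\psi_{v,\xi}(u)=[\Psi(u,v)](\xi)$ in $u$ and inserting (ii) gives $\frac{d}{du}\psi_{v,\xi}(u)=A(u)\psi_{v,\xi}(u)$, so $\psi_{v,\xi}$ solves \eqref{170}.

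The main obstacle I anticipate is the bookkeeping in (iii). One must first know that $r\mapsto Z(r,t)$ is differentiable so that $\partial_1$ even exists — this is exactly what the preceding Corollary provides from the standing differentiability hypothesis — and then apply the chain rule to a map whose single variable appears in both coordinates. The crux is the cancellation of the $\partial_2$ terms, which relies on the precise shape of Lemma~\ref{147}; the rest of the argument is routine substitution.
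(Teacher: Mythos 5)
Your treatment of (i), (ii) and (iv) is correct and coincides with the paper's: (i) is the purely algebraic collapse via the cotranslation identity, (ii) is exactly the application of Lemma \ref{165} with $v+(u-v)=u$, and (iv) follows from $Z(v,0)=\Id$ together with (ii).

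For (iii), however, your route contains a genuine gap. You write $\frac{d\Psi}{dv}=\partial_1 Z(v,u-v)-\partial_2 Z(v,u-v)$ by ``the chain rule,'' but the standing hypotheses only give the \emph{separate} existence of $\partial_1$ and $\partial_2$ at each point (the Corollary yields $\partial_1$ pointwise from $\partial_2$). The two-variable chain rule along the curve $v\mapsto(v,u-v)$ requires joint (Fréchet) differentiability of $Z$, or at least continuity of one of the partials, and neither is available here: the paper explicitly remarks in this very proof that one cannot in general assert that $u\mapsto A(u)=\partial_2 Z(u,0)$ is continuous, and $\partial_2 Z(r,t)=A(r+t)Z(r,t)$ inherits that defect. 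Decomposing the difference quotient as $\frac{Z(v+h,u-v-h)-Z(v,u-v-h)}{h}+\frac{Z(v,u-v-h)-Z(v,u-v)}{h}$ shows the problem concretely: the first quotient is a $\partial_1$-type quotient evaluated at the \emph{moving} second argument $u-v-h$, and pointwise existence of $\partial_1$ does not make it converge to $\partial_1 Z(v,u-v)$. The paper avoids this entirely by factorizing so that $v$ occupies a single slot: $\Psi(u,v)=Z(0,u)Z(v,-v)=Z(0,u)Z^{\inv}(0,v)$, then differentiating with Lemma \ref{171} and converting $\big[\partial_2 Z(0,v)\big]Z^{\inv}(0,v)=\partial_2 Z(v,0)=A(v)$ via Lemma \ref{165}. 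Your argument can be repaired in the same one-variable spirit without the factorization at $0$: the cotranslation identity gives $Z(v+h,u-v-h)=Z(v,u-v)Z^{\inv}(v,h)$, hence
\begin{equation*}
\frac{\Psi(u,v+h)-\Psi(u,v)}{h}=Z(v,u-v)\,\frac{Z^{\inv}(v,h)-\Id}{h}\;\xrightarrow[h\to 0]{}\;Z(v,u-v)\,\partial_2 Z^{\inv}(v,0)=-\Psi(u,v)A(v),
\end{equation*}
where the last equality is Lemma \ref{171} with $t_0=0$ and $Z(v,0)=\Id$. With that substitution for your chain-rule step, the rest of your proposal stands.
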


\begin{proof}
It is easy to see that each $A(u)$ is a linear transformation of $X$ (although we cannot ensure in general that it is continuous). Similarly, we cannot in general state that $u\mapsto A(u)$ is continuous. To verify i) it is enough to see:
$$\Psi(u,v)\Psi(v,w)=Z(v,u-v)Z(w,v-w)=Z\left(w,(v-w)+(u-v)\right)=Z(w,u-w)=\Psi(u,w).$$

On the other hand, on Lemma \ref{165} we proved the identity 
$$\partial_2Z(r,t)=\Big[\partial_2Z(r+t,0)\Big]Z(r,t),$$
hence
\begin{equation*}
 \frac{d\Psi}{du}(u,v)=\frac{d}{du}\left[Z(v,u-v)\right]=\partial_2Z(v,u-v)=\Big[\partial_2Z(v+u-v,0)\Big]Z(v,u-v)=A(u)\Psi(u,v),
\end{equation*}
from where ii) follows. Then, trivially $\psi_{v,\xi}$ is a solution to (\ref{170}) and
$$\psi_{v,\xi}(v)=\left[\Psi(v,v)\right](\xi)=\left[Z(v,v-v)\right](\xi)=\Id(\xi)=\xi,$$ 
thus verifying iv). Finally, note that
$$\Psi(u,v)=Z(v,u-v)=Z(0,u)Z(v,-v)=Z(0,u)Z^\inv(0,v),$$
hence, using the identity from Lemma \ref{171} we obtain
\begin{align*}
    \frac{d\Psi}{dv}(u,v)=Z(0,u)\frac{d}{dv}\left[Z^\inv(0,v)\right]&=Z(0,u)\Big[\partial_2Z^\inv(0,v)\Big]\\
    &=-Z(0,u)Z^\inv(0,v)\Big[\partial_2Z(0,v)\Big]Z^\inv(0,v)\\
    &=-\Psi(u,v)\Big[\partial_2Z(v,0)\Big]\\
    &=-\Psi(u,v)A(v),
\end{align*}
where the second to last equality follows from the identify of Lemma \ref{165}, thus proving iii).
\end{proof}
 
The existence of a function with the properties of $\Psi$ as described in the preceding is precisely what is required to articulate globally defined solutions for every initial condition. We define this concept as follows:

\begin{definition}
 \cite[Definition 5.1.3]{Pazy}   A map $\Psi:\mathbb{K}\rimes\mathbb
    {K}\to \mathbb{B}_X$ is an \textbf{evolution operator} (or evolution system) if the following conditions are verified:
    \begin{itemize}
        \item [i)] $\Psi(r,r)=\Id$, $\Psi(r,t)\Psi(t,s)=\Psi(r,s)$ for every $r,s,t\in \mathbb{K}$,
        \item [ii)] $(r,t)\to \Psi(r,t)\in\mathbb{B}_X$ is strongly continuous.
    \end{itemize}

    If furthermore there is a map $t\mapsto A(t)\in\mathbb{L}_X$ such that
    \begin{itemize}
        \item [iii)] $\partial_1\Psi(r,t)=A(r)\Psi(r,t)$ and $\partial_2\Psi(r,t)=-\Psi(r,t)A(t)$, 
        we say that $\Psi$ is the \textbf{evolution operator associated} to the differential equation $\dot{x}=A(t)x(t)$.
    \end{itemize}
    
\end{definition}

We know \cite[Theorem 4.1.3]{Pazy} that an autonomous linear differential equation $\dot{x}=Ax$ on a Banach space has uniquely defined solutions on $[0,\infty)$ for every initial condition if and only if $A$ is the infinitesimal generator \cite[Definition 1.2, p.49]{Engel} of a $C_0$-semigroup. Inspired by this, we present the following definition and theorem.

\begin{definition}
    For a differentiable cotranslation $Z:\mathbb{K}\rimes\mathbb{K}\to \mathbb{A}_X$, its \textbf{infinitesimal generator} is the function $A:\mathbb{K}\to \mathbb{L}_X$ given by
    $A(t)=\partial_2Z(t,0)$. 
\end{definition}

Note that the infinitesimal generator of a cotranslation corresponds to the derivative respect to the second coordinate evaluated on the unit space $\left(\mathbb{K}\rimes\mathbb{K}\right)^{(0)}$ of the groupoid.

\begin{theorem}
    A linear nonautonomous differential equation on a Banach space $X$
    \begin{equation}\label{179}
        \dot{x}(t)=A(t)x(t),
    \end{equation}
    has an evolution operator associated to it (i.e. unique and globally defined solution for every initial condition on $X$) if and only if $A$ is the infinitesimal generator of a differentiable cotranslation $Z:\mathbb{K}\rimes \mathbb{K}\to \mathbb{A}_X$.
\end{theorem}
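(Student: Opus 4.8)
The plan is to prove the biconditional by setting up an explicit dictionary between evolution operators $\Psi$ and differentiable cotranslations $Z$, given by the mutually inverse substitutions
\[
\Psi(u,v)=Z(v,u-v),\qquad\text{equivalently}\qquad Z(r,t)=\Psi(t+r,r).
\]
Under this dictionary the cotranslation identity $Z(r,s+t)=Z(t+r,s)Z(r,t)$ and the cocycle identity $\Psi(u,v)\Psi(v,w)=\Psi(u,w)$ become the same equation, and I will check that the two infinitesimal generators agree; so each implication reduces to verifying that the proposed object lands in the right class. Essentially all of the analytic content has already been isolated in Proposition \ref{178}, which I intend to quote wholesale.

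For the implication from cotranslations to evolution operators, I would start from a differentiable cotranslation $Z$ whose generator is $A(t)=\partial_2Z(t,0)$ and set $\Psi(u,v):=Z(v,u-v)$. Proposition \ref{178} already delivers the cocycle law together with the differential identities $\partial_1\Psi(u,v)=A(u)\Psi(u,v)$ and $\partial_2\Psi(u,v)=-\Psi(u,v)A(v)$, which are precisely condition iii) of an evolution operator associated to \eqref{179}. The only gaps to fill are the normalization $\Psi(r,r)=Z(r,0)=\Id$, which follows from the lemma giving $Z(g,e)=\Id$, and strong continuity of $\Psi$, which follows since $\Psi$ is $Z$ precomposed with the continuous map $(u,v)\mapsto(v,u-v)$. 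This exhibits the associated evolution operator, hence the global existence and uniqueness of solutions.

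For the converse, I would take an associated evolution operator $\Psi$ and define $Z(r,t):=\Psi(t+r,r)$. The groupoid-morphism property follows immediately from the cocycle law:
\[
Z(r,s+t)=\Psi(s+t+r,r)=\Psi(s+t+r,t+r)\Psi(t+r,r)=Z(t+r,s)Z(r,t).
\]
Continuity of $Z$ is inherited from strong continuity of $\Psi$, and differentiability in the sense of the preceding corollary follows from condition iii), since $\partial_2Z(r,t)=\partial_1\Psi(t+r,r)=A(t+r)\Psi(t+r,r)$ exists; evaluating at $t=0$ yields $\partial_2Z(r,0)=A(r)\Psi(r,r)=A(r)$, so the generator of $Z$ is exactly $A$.

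The step I expect to need the most care is verifying that $Z$ actually takes values in $\mathbb{A}_X$, not merely in $\mathbb{B}_X$: an evolution operator is only required to be bounded-operator valued, so I must extract invertibility from the axioms. This is where I would use that $\Psi(r,t)\Psi(t,r)=\Psi(r,r)=\Id$ and $\Psi(t,r)\Psi(r,t)=\Psi(t,t)=\Id$ force each $\Psi(r,t)$ to be invertible with inverse $\Psi(t,r)\in\mathbb{B}_X$, so that $Z(r,t)=\Psi(t+r,r)\in\mathbb{A}_X$. Beyond this invertibility check the argument is bookkeeping, confirming that the two substitutions are genuinely inverse to one another and that no hypothesis of Proposition \ref{178} or of the differentiability corollary is left unverified.
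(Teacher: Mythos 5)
Your proposal follows essentially the same route as the paper: the forward direction quotes Proposition \ref{178} for the correspondence $\Psi(u,v)=Z(v,u-v)$, and the converse defines $Z(r,t)=\Psi(t+r,r)$ and verifies the cocycle, differentiability, and generator identities exactly as the paper does (the paper invokes Lemma \ref{165} where you compute $\partial_2Z(r,0)=A(r)\Psi(r,r)$ directly, which amounts to the same thing). Your explicit checks of $\Psi(r,r)=\Id$, strong continuity, and in particular the invertibility argument $\Psi(r,t)\Psi(t,r)=\Psi(r,r)=\Id$ forcing $Z$ to take values in $\mathbb{A}_X$ are details the paper compresses into ``it is easy to see,'' and they are correct.
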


\begin{proof}
    Proposition \ref{178} states that if $A(t)=\partial_2Z(t,0)$ for some cotranslation, then there is an evolution operator associated to (\ref{179}). 
    
    \smallskip
    On the other hand, if (\ref{179}) has an evolution operator associated, say $\Psi$, then set $Z:\mathbb{K}\rimes\mathbb{K}\to\mathbb{A}_X$  by $Z(r,t)=\Psi(t+r,r)$. It is easy to see that such $Z$ is a differentiable groupoid morphism and furthermore, by Lemma \ref{165}, we conclude $A(t)=\partial_2Z(t,0)$. 
\end{proof}

\section{Partial cotranslations}

In this section, we focus on a partial version of cotranslations. We fix a topological group $G$ and consider the left translations groupoid structure in $G \times G$. Additionally, we work with a Banach space $X$ over the field $\mathbb{K}$. While some results presented in this section apply to general Banach spaces, the key conclusions require $X$ to be finite-dimensional. Towards the end of the section, the notion of an orthonormal basis is entailed, so in essence, our main results are established when considering the space $X=\mathbb{K}^d$ with its Euclidean norm. In this case,  $\mathbb{B}_X\cong \mathcal{M}_d(\mathbb{K})$ and $\mathbb{A}_X\cong GL_d(\mathbb{K})$.

\begin{definition} 
    \begin{itemize}
        \item [a)] We say a function  $W:G\times G\to \mathbb{B}_X$ is a \textbf{partial cotranslation}  if
        $$W(g,kh)=W(hg,k)\circ  W(g,h),\quad\text{ for all } g,h,k\in G.$$

        \item [b)] For two partial cotranslations $W,V:G\rimes G\to\mathbb{B}_x$, we say they are \textbf{mutually orthogonal} if $W(hg,k)V(g,h)=V(hg,k)W(g,h)=0$ for all $g,h,k\in G$.
    \end{itemize}
\end{definition}

Unlike a cotranslation, a partial cotranslation is not necessarily a groupoid morphism, since its codomain is not a groupoid (at least not if we define $\mathbb{B}_X^{(2)}=\mathbb{B}_X^2$). Now we study some basic properties for cotranslations.

\begin{lemma}\label{331}
    If $W:G\rimes G\to \mathbb{B}_X$ is a partial cotranslation, then $W(g,e)$ is idempotent for all $g\in G$.
\end{lemma}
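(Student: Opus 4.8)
The plan is to extract idempotency directly from the defining identity of a partial cotranslation by choosing the free parameters $h$ and $k$ to collapse both sides. Recall that $W$ satisfies
\begin{equation*}
W(g,kh)=W(hg,k)\circ W(g,h),\qquad\text{for all }g,h,k\in G.
\end{equation*}
The goal is to produce an equation of the form $W(g,e)=W(g,e)\circ W(g,e)$, so the natural move is to specialize the two ``composition'' slots $h$ and $k$ to the unit $e$, since multiplying by $e$ on either the left-translation argument $hg$ or the base argument $kh$ leaves them unchanged.

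Concretely, I would substitute $h=e$ and $k=e$ into the defining identity. On the right-hand side, $hg=eg=g$ and the middle argument $k=e$, so $W(hg,k)=W(g,e)$, while $W(g,h)=W(g,e)$. On the left-hand side, $kh=e\cdot e=e$, so $W(g,kh)=W(g,e)$. The identity therefore reads
\begin{equation*}
W(g,e)=W(g,e)\circ W(g,e),
\end{equation*}
which is precisely the assertion that $W(g,e)$ is idempotent, for every $g\in G$. I do not anticipate any genuine obstacle here: the statement is an immediate consequence of the cotranslation relation under the substitution $h=k=e$, and the only points to check are the trivial group identities $eg=g$ and $ee=e$. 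No appeal to continuity, invertibility, or the Banach space structure of $X$ is needed, so the argument is purely algebraic and applies verbatim to the values $W(g,e)\in\mathbb{B}_X$.
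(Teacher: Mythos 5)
Your proof is correct and coincides with the paper's own argument: both substitute $h=k=e$ into the defining identity $W(g,kh)=W(hg,k)\circ W(g,h)$ to obtain $W(g,e)=W(g,e)\circ W(g,e)$. There is nothing further to add; the observation that the argument is purely algebraic and uses no continuity or invertibility is accurate.
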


\begin{proof}
  By definition, we have $W(g,e)W(g,e)=W(g,e\cdot e)=W(g,e)$ for all $g\in G$.
\end{proof}

\begin{lemma}\label{352}
    Let $W:G\rimes G\to \mathbb{B}_X$ be a partial cotranslation. For all $g,h,k\in G$ one gets $\ker W(h,g)=\ker W(h,k)$. In particular, if $X$ is finite dimensional, then $\rank W(h,g)=\rank W(h,k)$.
\end{lemma}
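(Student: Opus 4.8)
The plan is to prove the two inclusions $\ker W(h,k)\subseteq \ker W(h,g)$ and $\ker W(h,g)\subseteq\ker W(h,k)$ separately, each by factoring one operator through the other via the defining relation of a partial cotranslation. The key observation is that, with the first coordinate held fixed at $h$, any choice of second coordinate can be reached as a product, so the defining identity expresses one value of $W(h,\cdot)$ as a composition whose rightmost factor is another prescribed value.

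First I would apply the identity $W(g,kh)=W(hg,k)\circ W(g,h)$ with the substitution that fixes $h$ as the first coordinate. Concretely, replacing $(g,k,h)$ in the definition by $(h,\,gk^{-1},\,k)$ — which is legitimate since $G$ is a group, so $gk^{-1}\in G$ and $(gk^{-1})k=g$ — gives
$$W(h,g)=W(kh,\,gk^{-1})\circ W(h,k).$$
Hence any $x\in\ker W(h,k)$ satisfies $W(h,g)x=W(kh,gk^{-1})\bigl(W(h,k)x\bigr)=0$, so that $\ker W(h,k)\subseteq\ker W(h,g)$.

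Next, exchanging the roles of $g$ and $k$ — that is, replacing $(g,k,h)$ by $(h,\,kg^{-1},\,g)$ — produces the symmetric factorization
$$W(h,k)=W(gh,\,kg^{-1})\circ W(h,g),$$
which yields the reverse inclusion $\ker W(h,g)\subseteq\ker W(h,k)$. Combining the two inclusions gives $\ker W(h,g)=\ker W(h,k)$.

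For the final assertion I would simply invoke the rank--nullity theorem: when $X$ is finite-dimensional, $\rank W(h,g)=\dim X-\dim\ker W(h,g)$, so equality of the kernels forces equality of the ranks. There is no substantial obstacle here; the only delicate point is bookkeeping the noncommutative decomposition $g=(gk^{-1})k$ together with the induced shift of the first coordinate from $h$ to $kh$ in the outer factor, so that the substitution matches the defining relation exactly.
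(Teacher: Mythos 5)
Your proposal is correct and follows essentially the same route as the paper: your second factorization $W(h,k)=W(gh,kg^{-1})\circ W(h,g)$ is precisely the identity the paper uses, and where the paper invokes symmetry for the reverse inclusion you simply write out the symmetric identity explicitly, finishing with rank--nullity exactly as the paper does.
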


\begin{proof}
    From the identity
    $$W(h,k)=W(gh,kg^{-1})W(h,g)$$
    one gets $\xi\in \ker W(h,g)\Rightarrow \xi\in \ker W(h,k)$, thus $\ker W(h,g)\leq \ker W(h,k)$ and by symmetry of the argument we obtain the equality. The second conclusion is trivial if $X$ is finite dimensional.
\end{proof}

A fundamental result in linear algebra that we require is that for two transformations $A$ and $B$ in $\mathbb{B}_X$, if $AB$ has the same kernel as $B$, then $\dim \ker A \leq \dim \ker B$.

\begin{proposition}\label{366}
      Let $W:G\rimes G\to \mathbb{B}_X$ be a partial cotranslation, where $X$ is finite dimensional. Then $\rank W(g,h)=\rank W(k,\ell)$ for every $g,h,k,\ell\in G$. 
\end{proposition}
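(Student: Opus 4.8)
The plan is to show that $\rank W(g,h)$ is independent of each coordinate separately, and then to upgrade independence in the first coordinate from a one-sided inequality by exploiting the group structure of $G$.

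First, Lemma \ref{352} already handles the second coordinate: for fixed first coordinate $g$, the kernel $\ker W(g,h)$ is the same for all $h$, so in finite dimension $\rank W(g,h)$ does not depend on $h$. It is therefore legitimate to write $\rho(g):=\rank W(g,h)$, a quantity depending only on $g\in G$, and the goal reduces to proving that $g\mapsto\rho(g)$ is constant on $G$.

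Second, I would combine the partial cotranslation identity $W(g,kh)=W(hg,k)\circ W(g,h)$ with the kernel stability from Lemma \ref{352}. Since $W(g,kh)$ and $W(g,h)$ share their first coordinate, that lemma gives $\ker\big(W(hg,k)\circ W(g,h)\big)=\ker W(g,kh)=\ker W(g,h)$; in other words, the product $AB$ with $A=W(hg,k)$ and $B=W(g,h)$ has the same kernel as $B$. Applying the linear-algebra fact recalled just before the statement, $\dim\ker A\le\dim\ker B$, which in finite dimension reads $\rank W(hg,k)\ge\rank W(g,h)$, i.e. $\rho(hg)\ge\rho(g)$ for all $g,h\in G$.

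Finally, I would convert this one-sided inequality into an equality using that $G$ is a group. Given arbitrary $a,b\in G$, choosing $g=b$ and $h=ab^{-1}$ yields $hg=a$, hence $\rho(a)\ge\rho(b)$; exchanging the roles of $a$ and $b$ gives the reverse inequality, so $\rho(a)=\rho(b)$. Combined with the first step, this shows $\rank W(g,h)=\rank W(k,\ell)$ for all $g,h,k,\ell\in G$. The only genuinely nontrivial step is the second one: recognizing that the cotranslation identity rewrites $W(g,kh)$ as a product whose right factor is $W(g,h)$, and that Lemma \ref{352} forces this product to retain the kernel of that right factor. The remaining steps are bookkeeping — the rank–nullity translation between kernel dimension and rank, and the elementary observation that a group makes $\rho(hg)\ge\rho(g)$ equivalent to constancy of $\rho$.
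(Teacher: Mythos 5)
Your proof is correct and uses essentially the same ingredients as the paper's: Lemma \ref{352}, the linear-algebra fact stated just before the proposition, and the group structure of $G$. The only difference is organizational — the paper specializes to $k=h^{-1}$ and uses the two dual identities $W(hg,h^{-1})W(g,h)=W(g,e)$ and $W(g,h)W(hg,h^{-1})=W(hg,e)$ to obtain both inequalities at once, whereas you extract the one-sided inequality $\rank W(hg,k)\geq \rank W(g,h)$ from the general cotranslation identity and then symmetrize via the substitution $g=b$, $h=ab^{-1}$, which is a slightly cleaner presentation of the same argument.
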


\begin{proof}
    Starting from the identity
    $$W(hg,h^{-1})W(g,h)=W(g,e),$$
    as Lemma \ref{352} indicates $W(g,h)$ and $W(g,e)$ have the same kernel, then the previous observation implies $\dim\ker W(hg,h^{-1})\leq \dim\ker W(g,h)$. 

\smallskip
    Now, from the identity
    $$W(g,h)W(hg,h^{-1})=W(hg,e),$$
    as Lemma \ref{352} showed that $W(hg,h^{-1})$ and $W(hg,e)$ have the same kernel, then $\dim\ker W(g,h)\leq \dim \ker W(hg,h^{-1})$. In conclusion, $\dim\ker W(hg,h^{-1})=\dim\ker W(g,h)$, thus $\rank W(hg,h^{-1})=\rank W(g,h)$.

    \smallskip
    Taking $h=kg^{-1}$ in the last equality we obtain $\rank W(k,gk^{-1})=\rank W(g,kg^{-1})$, but for arbitrary $h,\ell\in G$ we know, from Lemma \ref{352} that $\rank W(g,kg^{-1})=\rank W(g,h)$ and $\rank W(k,gk^{-1})=\rank W(k,\ell)$, thus $\rank W(g,h)=\rank W(k,\ell)$.
\end{proof}

In view of the above, we can define without ambiguity:

\begin{definition}\label{365}
     Let $X$ be finite dimensional. For a partial cotranslation $W:G\rimes G\to\mathbb{B}_X$, the \textbf{rank} of the partial cotranslation is $\rank \,W:=\rank\, W(g,h)$, for arbitrary $g,h\in G$.
\end{definition}

As every cotranslation is in particular a partial cotranslation, the following definition applies as well for cotranslations.

\begin{definition}\label{158}
    Given a partial cotranslation $V:G\rimes G\to \mathbb{B}_X$, we say a function $\P:G\to \mathbb{B}_X$ is:
    \begin{itemize}
        \item [a)] \textbf{projector} if $\P(g)$ is idempotent on $\mathbb{B}_X$ for every $g\in G$.
        \item [b)] \textbf{invariant projector} associated to $V$ if it is a projector and
        $$\P(hg)V(g,h)=V(g,h)\P(g),\qquad\forall\,g,h\in G.$$
        \item [c)] For other invariant projectors $\mathrm{Q}:G\to \mathbb{B}_X$, we say $\P$ and $\mathrm{Q}$ are \textbf{mutually orthogonal} if $\P(g)\mathrm{Q}(g)=\mathrm{Q}(g)\P(g)=0$ for every $g\in G$.
    \end{itemize}
\end{definition}

It is easy to see that if $\P$ is an invariant projector for a partial cotranslation, then $\Id-\P$ is invariant as well, and they are mutually orthogonal. In the following we examine properties related to invariant projectors.

\begin{lemma}
    If $W:G\rimes G\to \mathbb{B}_X$ is a partial cotranslation and we define $\P:G\to \mathbb{B}_X$ by $\P(g)=W(g,e)$, called its \textbf{projector of the units space}, we obtain that $\P$ is an invariant projector associated to $W$. If $W$ is continuous, then its projector of the units space is continuous as well.
\end{lemma}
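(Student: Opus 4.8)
The plan is to check the defining requirements of an invariant projector (Definition \ref{158}) one at a time, reducing each to an appropriate specialization of the single identity $W(g,kh)=W(hg,k)\circ W(g,h)$, and then to treat continuity separately.

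First, that $\P(g)=W(g,e)$ is a projector is immediate from Lemma \ref{331}, which already asserts that $W(g,e)$ is idempotent for every $g\in G$; so condition (a) of Definition \ref{158} requires no further work.

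The core of the argument is the intertwining relation $\P(hg)W(g,h)=W(g,h)\P(g)$. I would obtain it by showing that $\P$ absorbs $W$ from both sides. Specializing $k=e$ in the defining identity gives $W(g,h)=W(g,eh)=W(hg,e)\circ W(g,h)=\P(hg)W(g,h)$, so $\P(hg)$ acts as a left identity on $W(g,h)$. Specializing instead $h=e$ (and using $eg=g$) gives $W(g,k)=W(g,ke)=W(g,k)\circ W(g,e)=W(g,k)\P(g)$; taking $k=h$ yields $W(g,h)=W(g,h)\P(g)$, so $\P(g)$ acts as a right identity on $W(g,h)$. Consequently both $\P(hg)W(g,h)$ and $W(g,h)\P(g)$ equal $W(g,h)$, which is exactly the relation demanded by condition (b).

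Finally, continuity of $\P$ follows by factoring it as $\P=W\circ\iota$, where $\iota:G\to G\rimes G$ is the continuous map $g\mapsto(g,e)$; hence $\P$ is continuous whenever $W$ is. I do not anticipate a genuine obstacle here: the only point requiring care is selecting the correct substitutions ($k=e$ for the left identity, $h=e$ for the right) so that both sides of the intertwining relation collapse onto $W(g,h)$.
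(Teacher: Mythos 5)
Your proposal is correct and follows essentially the same route as the paper: idempotence of $\P(g)=W(g,e)$ comes from Lemma \ref{331}, and the invariance relation is obtained by the same two specializations ($h=e$ giving $W(g,h)\P(g)=W(g,h)$ and $k=e$ giving $\P(hg)W(g,h)=W(g,h)$), so both sides collapse to $W(g,h)$ exactly as in the paper's chain of equalities. Your explicit factorization $\P=W\circ\iota$ merely spells out the continuity step the paper dismisses as trivial.
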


\begin{proof}
    We know from Lemma \ref{331} that it is indeed a projector. To obtain its invariance it is enough to see
    \begin{align*}
        W(g,h)\P(g)=W(g,h)W(g,e)=W(g,h)=W(hg,e)W(g,h)=\P(hg)W(g,h).
    \end{align*}

The last statement is trivial.
\end{proof}

\begin{proposition}\label{172}
    If $V:G\times G\to \mathbb{B}_X$ is a partial cotranslation and $\P$ is an associated invariant projector, then $W:G\rimes G\to \mathbb{B}_X$ given by $W(g,h)=V(g,h)\P(g)$ is a partial cotranslation.
    
    \smallskip
    If moreover both $\P$ and $V$ are continuous, then $W$ is continuous as well.
\end{proposition}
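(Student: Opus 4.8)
The plan is to verify the defining identity of a partial cotranslation, namely $W(g,kh)=W(hg,k)\circ W(g,h)$, by a direct computation that unfolds $W(g,h)=V(g,h)\P(g)$ and then applies, in order, the three structural hypotheses available: the partial cotranslation property of $V$, the invariance of $\P$ relative to $V$, and the idempotence of each $\P(g)$.

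Concretely, I would first expand the left-hand side. Since $V$ is itself a partial cotranslation, $V(g,kh)=V(hg,k)\,V(g,h)$, whence
$$W(g,kh)=V(g,kh)\,\P(g)=V(hg,k)\,V(g,h)\,\P(g).$$
Next I would expand the right-hand side as
$$W(hg,k)\circ W(g,h)=V(hg,k)\,\P(hg)\,V(g,h)\,\P(g),$$
and use the invariance relation $\P(hg)\,V(g,h)=V(g,h)\,\P(g)$ from Definition \ref{158} to replace the central factor $\P(hg)\,V(g,h)$, obtaining $V(hg,k)\,V(g,h)\,\P(g)\,\P(g)$. Finally, idempotence gives $\P(g)\,\P(g)=\P(g)$, so the trailing product collapses and both sides reduce to $V(hg,k)\,V(g,h)\,\P(g)$. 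This establishes that $W$ is a partial cotranslation.

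There is no genuine algebraic obstacle here: the whole verification is a short chain of substitutions, the only subtlety being to keep the arguments of $\P$ aligned correctly, since the invariance relation pairs $\P(hg)$ with $V(g,h)$, which is precisely the shift in the first coordinate produced by the composition law. The one point that deserves a word of care is the continuity claim. The map $W$ is the pointwise product of the continuous maps $(g,h)\mapsto V(g,h)$ and $g\mapsto \P(g)$, that is, the composite of $(g,h)\mapsto\big(V(g,h),\P(g)\big)\in\mathbb{B}_X\times\mathbb{B}_X$ with operator multiplication. In finite dimensions, where $\mathbb{B}_X\cong\mathcal{M}_d(\mathbb{K})$, multiplication is continuous and continuity of $W$ is immediate; for a general Banach space with the strong topology one invokes the standard estimate $\|S'T'\xi-ST\xi\|\le\|S'\|\,\|T'\xi-T\xi\|+\|(S'-S)T\xi\|$, so that joint strong continuity of the product, and hence of $W$, follows from the local boundedness of the left factors $V(g,h)$.
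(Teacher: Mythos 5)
Your verification is correct and follows essentially the same route as the paper: the paper's proof is the single chain $W(g,kh)=V(g,kh)\P(g)^2=V(hg,k)V(g,h)\P(g)^2=V(hg,k)\P(hg)V(g,h)\P(g)=W(hg,k)W(g,h)$, using exactly your three ingredients (the cotranslation identity for $V$, idempotence of $\P$, and the invariance relation), merely organized as a one-directional computation rather than your meet-in-the-middle expansion of both sides. Your elaboration of the continuity claim, which the paper dismisses as trivial, is a sound filling-in of detail (with the implicit use of local compactness of $G$ to get local boundedness in the strong-topology case) rather than a different argument.
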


\begin{proof}
It is enough to see that
    \begin{eqnarray*}
        W(g,kh)=V(g,kh)\P(g)&=&V(g,kh)\P(g)^2\\
    &=&V(hg,k)V(g,h)\P(g)^2\\
    &=&V(hg,k)\P(hg)V(g,h)\P(g)\\
    &=&W(hg,k)W(g,h).
    \end{eqnarray*}

    Finally, continuity follows trivially.
\end{proof}

\begin{example}\label{417}
    {\rm Consider the linear nonautonomous  differential equation $\dot{x}=A(t)x$ with transition matrix $\Phi$. We know from Example \ref{191} that $Z(s,t)=\Phi(t+s,s)$ defines a continuous cotranslation with $X=\mathbb{R}^d$ and $G=\mathbb{R}$. Consider now a continuous invariant projector $\P$ for this cotranslation (in the sense of Definition \ref{158}). Proposition \ref{172} shows that $W(s,t)=Z(s,t)\P(s)$ defines a continuous partial cotranslation. 
    
    \smallskip
Moreover, in this case $\P$ is also an invariant projector for this differential equation, in the usual sense for nonautonomous dynamics \cite[p. 246]{Siegmund}. Indeed:
    \begin{equation*}
        \P(t)\Phi(t,s)=\P(t)Z(s,t-s)=\P\left((t-s)+s\right)Z(s,t-s)=Z(s,t-s)\P(s)=\Phi(t,s)\P(s).
    \end{equation*}
    
Note that $W$ can also be written $W(s,t)=\Phi(t+s,s)\P(s)$, which is a matrix function that describes the solutions to the equation $\dot{x}=A(t)x$ whose graph lies in the linear integral manifold \cite[Definition 2.2]{Siegmund} associated to the image of this projector \cite[Lemma 2.1]{Siegmund}.}
\end{example}

\begin{lemma}\label{372}
    If $W,V:G\rimes G\to \mathbb{B}_X$ are mutually orthogonal partial cotranslations, then $W+V:G\rimes G\to \mathbb{B}_X$ is a partial cotranslation. If both $W$ and $V$ are continuous, then $W+V$ is as well. 
\end{lemma}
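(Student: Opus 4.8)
The plan is to verify directly that $W+V$ satisfies the defining functional equation of a partial cotranslation, namely that $(W+V)(g,kh)=(W+V)(hg,k)\circ(W+V)(g,h)$ for all $g,h,k\in G$. The natural approach is to expand the right-hand side using distributivity (which is available since $\mathbb{B}_X$ is an algebra and composition distributes over addition of operators), producing four cross terms, and then to match them against the left-hand side using the individual partial-cotranslation identities for $W$ and $V$ together with the mutual orthogonality hypothesis.

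First I would write out, for fixed $g,h,k\in G$,
\begin{align*}
(W+V)(hg,k)\circ(W+V)(g,h)&=\big(W(hg,k)+V(hg,k)\big)\circ\big(W(g,h)+V(g,h)\big)\\
&=W(hg,k)W(g,h)+V(hg,k)V(g,h)\\
&\quad+W(hg,k)V(g,h)+V(hg,k)W(g,h).
\end{align*}
The two ``diagonal'' terms are precisely $W(g,kh)$ and $V(g,kh)$ by the partial-cotranslation property of $W$ and $V$ respectively, so their sum is $(W+V)(g,kh)$, exactly the left-hand side we want. The two ``cross'' terms $W(hg,k)V(g,h)$ and $V(hg,k)W(g,h)$ are precisely the expressions appearing in the definition of mutual orthogonality, hence each vanishes. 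This shows the cross terms contribute nothing and the identity holds.

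There is no real obstacle here: the argument is a short computation, and the only subtlety worth flagging is that the mutual orthogonality hypothesis is stated exactly in the form $W(hg,k)V(g,h)=V(hg,k)W(g,h)=0$, so it applies verbatim to the cross terms without any reindexing. For the continuity claim, I would simply note that addition $\mathbb{B}_X\times\mathbb{B}_X\to\mathbb{B}_X$ is continuous, so if $W$ and $V$ are both continuous then $W+V$, being their pointwise sum, is continuous as a map $G\times G\to\mathbb{B}_X$. This completes the verification of both assertions in the lemma.
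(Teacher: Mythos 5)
Your proof is correct and matches the paper's argument essentially verbatim: the paper performs the same computation in reverse (inserting the vanishing cross terms and factoring, rather than expanding and cancelling), and likewise dismisses continuity as immediate from continuity of addition in $\mathbb{B}_X$.
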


\begin{proof}
Note that
        \begin{eqnarray*}
        W(g,kh)+V(g,kh)&=&W(hg,k)W(g,h)+V(hg,k)V(g,h)\\
    &=&W(hg,k)W(g,h)+W(hg,k)V(g,h)\\
    &&+V(hg,k)W(g,h)+V(hg,k)V(g,h)\\
    &=&\left(W(hg,k)+V(hg,k)\right)\left(W(g,h)+V(g,h)\right).
    \end{eqnarray*}

   Once again, continuity follows trivially. 
\end{proof}

\begin{lemma}\label{367}
    If $W,V:G\rimes G\to \mathbb{B}_X$ are mutually orthogonal partial cotranslations, then the unit space projector of $W$ is invariant for $W+V$. Moreover, the partial cotranslation obtained through $W+V$ and the projector of the units space of $W$ (following the construction on Proposition \ref{172}) is $W$.
\end{lemma}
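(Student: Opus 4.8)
The plan is to reduce everything to a handful of elementary identities and then combine them. First I would record what is already available: by Lemma \ref{372} the sum $W+V$ is again a partial cotranslation, and by Lemma \ref{331} the map $\P_W(g):=W(g,e)$ is a projector. Hence the only genuine content of the first assertion is the invariance relation $\P_W(hg)(W+V)(g,h)=(W+V)(g,h)\P_W(g)$.

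The key identities I would extract from the partial cotranslation axiom $W(g,kh)=W(hg,k)W(g,h)$ are the two absorption relations $W(hg,e)W(g,h)=W(g,h)$ (from writing $h=e\cdot h$) and $W(g,h)W(g,e)=W(g,h)$ (from writing $h=h\cdot e$); these are exactly the computations behind the unit space projector lemma. Alongside them, mutual orthogonality supplies the vanishing cross terms: setting $k=e$ in $W(hg,k)V(g,h)=0$ gives $W(hg,e)V(g,h)=0$, while substituting $h=e$ in $V(hg,k)W(g,h)=0$ and then relabelling the free variable $k$ as $h$ gives $V(g,h)W(g,e)=0$.

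With these in hand the invariance is immediate. Expanding the left side, $\P_W(hg)(W+V)(g,h)=W(hg,e)W(g,h)+W(hg,e)V(g,h)=W(g,h)+0$, while expanding the right side, $(W+V)(g,h)\P_W(g)=W(g,h)W(g,e)+V(g,h)W(g,e)=W(g,h)+0$; both equal $W(g,h)$, which proves that $\P_W$ is invariant for $W+V$.

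For the ``moreover'' clause I would feed $W+V$ together with its invariant projector $\P_W$ into the construction of Proposition \ref{172}, producing the partial cotranslation $(g,h)\mapsto (W+V)(g,h)\,\P_W(g)$. Using once more $W(g,h)W(g,e)=W(g,h)$ and $V(g,h)W(g,e)=0$, this equals $W(g,h)+0=W(g,h)$, so the recovered partial cotranslation is exactly $W$. I do not anticipate a genuine obstacle: the argument is pure bookkeeping, and the only point demanding care is the index substitution in the mutual orthogonality condition, ensuring that the slots of $V$ and $W$ align so that precisely the intended cross terms are the ones forced to vanish.
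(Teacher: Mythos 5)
Your proposal is correct and follows essentially the same route as the paper's proof: both reduce the invariance to the absorption identities $W(hg,e)W(g,h)=W(g,h)=W(g,h)W(g,e)$ together with the vanishing cross terms $W(hg,e)V(g,h)=0$ and $V(g,h)W(g,e)=0$ extracted from mutual orthogonality, and both obtain the ``moreover'' clause from the single line $(W+V)(g,h)W(g,e)=W(g,h)$. Your explicit derivation of the cross-term identities via the substitutions $k=e$ and $h=e$ is a slightly more careful spelling-out of steps the paper leaves implicit, but the argument is the same.
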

\begin{proof}
   It is enough to note that 
   \begin{align*}
       W(hg,e)\left(W(g,h)+V(g,h)\right)&=W(hg,e)W(g,h)+W(hg,e)V(g,h)\\
       &=W(g,h)W(g,e)\\
       &=W(g,h)W(g,e)+V(g,h)W(g,e)\\
       &= \left(W(g,h)+V(g,h)\right)W(g,e).
   \end{align*}

For the second statement, note that $$W(g,h)=W(g,h)W(g,e)=\left(W(g,h)+V(g,h)\right)W(g,e).$$
\end{proof}

\begin{lemma}
    If $\P$ and $\mathrm{Q}$ are mutually orthogonal invariant projectors for a partial cotranslation $V:G\rimes G\to \mathbb{B}_X$, then the partial cotranslations  $V_\P,V_\mathrm{Q}:G\rimes G\to \mathbb{B}_X$, given by $V_\P(g,h)=V(g,h)\P(g)$ and $V_\mathrm{Q}(g,h)=V(g,h)\mathrm{Q}(g)$ are mutually orthogonal.
\end{lemma}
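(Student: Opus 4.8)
The plan is to verify directly the two defining conditions of mutual orthogonality for $V_\P$ and $V_\mathrm{Q}$, namely that $V_\P(hg,k)V_\mathrm{Q}(g,h)=0$ and $V_\mathrm{Q}(hg,k)V_\P(g,h)=0$ for all $g,h,k\in G$. Both $V_\P$ and $V_\mathrm{Q}$ are already known to be partial cotranslations by Proposition \ref{172}, so only the vanishing of these two products remains. The guiding idea is that the invariance relations let us commute a projector past a factor $V(g,h)$, bringing the two projectors together at a common group element, where their mutual orthogonality forces the product to vanish.

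First I would expand $V_\P(hg,k)V_\mathrm{Q}(g,h)=V(hg,k)\P(hg)V(g,h)\mathrm{Q}(g)$ using the definitions. Applying the invariance relation $\P(hg)V(g,h)=V(g,h)\P(g)$ rewrites this as $V(hg,k)V(g,h)\P(g)\mathrm{Q}(g)$, and since $\P(g)\mathrm{Q}(g)=0$ by mutual orthogonality of the projectors, the whole expression collapses to $0$.

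The symmetric computation handles the other product: $V_\mathrm{Q}(hg,k)V_\P(g,h)=V(hg,k)\mathrm{Q}(hg)V(g,h)\P(g)$, and here one instead invokes the invariance of $\mathrm{Q}$, that is $\mathrm{Q}(hg)V(g,h)=V(g,h)\mathrm{Q}(g)$, to obtain $V(hg,k)V(g,h)\mathrm{Q}(g)\P(g)$, which vanishes because $\mathrm{Q}(g)\P(g)=0$.

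There is essentially no obstacle here; the statement is a direct algebraic consequence of the hypotheses. The only point demanding care is bookkeeping of the composition order: in each product one must apply the invariance of the \emph{correct} projector ($\P$ in the first product, $\mathrm{Q}$ in the second) so that the two projectors end up adjacent and evaluated at the same argument $g$, which is exactly the configuration in which mutual orthogonality of $\P$ and $\mathrm{Q}$ applies.
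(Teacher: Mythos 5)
Your proposal is correct and matches the paper's proof essentially verbatim: both expand $V_\P(hg,k)V_\mathrm{Q}(g,h)=V(hg,k)\P(hg)V(g,h)\mathrm{Q}(g)$, use the invariance relation to move $\P$ past $V(g,h)$, and conclude via $\P(g)\mathrm{Q}(g)=0$, with the other product handled symmetrically and Proposition \ref{172} cited for the partial cotranslation property. No gaps.
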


\begin{proof} We know both $V_\P$ and $V_\mathrm{Q}$ are partial cotranslations from Proposition \ref{172}. Moreover
    $$V_P(hg,k) V_\mathrm{Q}(g,h)=V(hg,k)\P(hg)V(g,h)\mathrm{Q}(g)=V(hg,k)V(g,h)\P(g)\mathrm{Q}(g)=0,$$
    and the other composition follows similarly.
\end{proof}

Now we present a mechanism to relate two different partial cotranslations.

\begin{definition}
    Given two partial cotranslations $W,V:G\times G\to \mathbb{B}_X$, we say they are \textbf{conjugated} if there is a map $T:G\to \mathbb{A}_X$ such that
$$T(hg)V(g,h)=W(g,h)T(g),\qquad\forall\,g,h\in G.$$
   
    In that case we call $T$ a \textbf{conjugation} between $W$ and $V$. If $T$, $W$ and $V$ are continuous, we say they are \textbf{continuously conjugated}. On the other hand, if 
    \begin{equation*}
        \sup_{g\in G}\left\{\norm{T(g)}, \norm{{T(g)^{-1}}}\right\}<\infty,
    \end{equation*}
   then we say $W$ and $V$ are \textbf{boudedly conjugated}
\end{definition}

The significance of conjugating two partial cotranslations lies in extracting information from one based on the other. Furthermore, as we progress through this section, we leverage this concept to demonstrate that every partial cotranslation can be represented as a cotranslation multiplied by an invariant projector.

\smallskip
In addition, the pursuit of continuous conjugations is motivated by the preservation of topological properties, while bounded conjugations are sought for their ability to preserve asymptotic behavior.

\begin{lemma}\label{369}
    Let $W:G\times G\to \mathbb{B}_X$ be a partial cotranslation and let $T:G\to \mathbb{A}_X$ be an un arbitrary map. Defining $W_T:G\rimes G\to \mathbb{B}_X$ by
    $$W_T(g,h)=T(hg)^{-1}W(g,h)T(g),$$
    we obtain that $W_T$ is a partial cotranslation. Moreover, if $X$ is finite dimensional, then $\rank W_T=\rank W$. Finally, if $T$ and $W$ are continuous, then $W_T$ is continuous as well.
\end{lemma}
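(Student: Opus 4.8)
The plan is to verify the three assertions directly, the heart being the partial-cotranslation identity for $W_T$. First I would check that $W_T$ is a partial cotranslation by a straightforward substitution and telescoping. Starting from $W_T(g,kh)=T(khg)^{-1}W(g,kh)T(g)$ and applying the defining identity $W(g,kh)=W(hg,k)W(g,h)$ for $W$, one gets $W_T(g,kh)=T(khg)^{-1}W(hg,k)W(g,h)T(g)$. On the other hand, expanding the composition gives
$$W_T(hg,k)W_T(g,h)=T(khg)^{-1}W(hg,k)T(hg)\,T(hg)^{-1}W(g,h)T(g),$$
and the interior factor $T(hg)T(hg)^{-1}=\Id$ cancels, producing the very same expression. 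The only point needing care is bookkeeping the arguments of $T$: in the left translations groupoid the base point of $W_T(hg,k)$ is $k\cdot hg=khg$, matching the base point $khg$ of $W_T(g,kh)$, so the two outer conjugating factors agree and the middle ones cancel.

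Next, for the rank statement I would first invoke the previous paragraph so that $W_T$ is a partial cotranslation, whence by Proposition \ref{366} and Definition \ref{365} its rank is well defined independently of the chosen pair. Then, since $T(hg)^{-1},T(g)\in\mathbb{A}_X$ are invertible, pre- and post-multiplying $W(g,h)$ by them preserves rank, so $\rank W_T(g,h)=\rank W(g,h)=\rank W$ for any fixed $g,h\in G$, giving $\rank W_T=\rank W$.

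Finally, continuity of $W_T$ follows by writing it as a composition of continuous maps: the group multiplication $(g,h)\mapsto hg$, the assumed continuity of $T$ and of $W$, the continuity of inversion in the topological group $\mathbb{A}_X$, and the multiplication in $\mathbb{B}_X$.

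I do not anticipate a genuine obstacle here: the cocycle identity is a clean cancellation, and the remaining two claims are immediate consequences of the invertibility of the values of $T$ and of composing continuous maps. The only subtlety worth flagging is the joint continuity of operator multiplication in the strong topology placed on $\mathbb{B}_X$; in the finite-dimensional setting governing the principal results of this section, where $\mathbb{B}_X\cong\mathcal{M}_d(\mathbb{K})$, this is automatic, which is why the continuity claim may be treated as routine.
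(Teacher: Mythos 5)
Your proof is correct and follows essentially the same route as the paper's: the identical insertion of $T(hg)T(hg)^{-1}=\Id$ between the two factors of $W(g,kh)=W(hg,k)W(g,h)$, with the rank claim reduced to invertibility of the values of $T$ and continuity treated as routine composition. Your extra caveat about joint continuity of multiplication in the strong operator topology is a fair observation the paper glosses over, but it does not change the argument.
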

\begin{proof}
    Note that
    \begin{align*}
        W_T(g,kh)&=T(khg)^{-1}W(g,kh)T(g)\\
        &=T(khg)^{-1}W(hg,k)W(g,h)T(g)\\
        &=T(khg)^{-1}W(hg,k)T(hg)T(hg)^{-1}W(g,h)T(g)\\
        &=W_T(hg,k)W_T(g,h).
    \end{align*}

    The second statement follows since $T(g)$ is invertible.
\end{proof}

\begin{lemma}\label{371}
    If $W,V:G\times G\to \mathbb{B}_X$ are two mutually orthogonal partial cotranslations and $T:G\to \mathbb{A}_X$ is an arbitrary map, then $W_T$ and $V_T$ (as defined on Lemma \ref{369}) are mutually orthogonal. 
\end{lemma}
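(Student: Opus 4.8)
The plan is to expand both orthogonality products $W_T(hg,k)V_T(g,h)$ and $V_T(hg,k)W_T(g,h)$ directly from the definition in Lemma \ref{369}, and to exploit the fact that the inner conjugating factors telescope. First I would record that, by Lemma \ref{369}, both $W_T$ and $V_T$ are themselves partial cotranslations; hence the only thing left to verify is the pair of vanishing conditions in the definition of mutual orthogonality.

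For the first product, writing out the definitions gives
\begin{align*}
W_T(hg,k)V_T(g,h)&=\big[T(khg)^{-1}W(hg,k)T(hg)\big]\big[T(hg)^{-1}V(g,h)T(g)\big]\\
&=T(khg)^{-1}\,W(hg,k)V(g,h)\,T(g),
\end{align*}
where the crucial cancellation is $T(hg)T(hg)^{-1}=\Id$. Since $W$ and $V$ are mutually orthogonal, $W(hg,k)V(g,h)=0$, so the entire expression vanishes. The symmetric computation, interchanging the roles of $W$ and $V$, yields $V_T(hg,k)W_T(g,h)=T(khg)^{-1}V(hg,k)W(g,h)T(g)=0$.

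There is essentially no obstacle here: the only point worth care is matching the arguments of $T$ correctly so that the middle factors collapse, namely that the shifted base point $hg$ appearing in $W_T(hg,k)$ is exactly the one produced in $V_T(g,h)$. Once this bookkeeping is in place, the mutual orthogonality of $W$ and $V$ passes through the conjugation verbatim, and the conclusion follows for an arbitrary map $T:G\to\mathbb{A}_X$, with no invertibility or continuity hypothesis on $T$ needed.
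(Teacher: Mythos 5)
Your proposal is correct and follows essentially the same route as the paper's proof: expand both products via the definition of $W_T$ and $V_T$, cancel the inner factors $T(hg)T(hg)^{-1}=\Id$, and invoke the mutual orthogonality $W(hg,k)V(g,h)=V(hg,k)W(g,h)=0$. In fact your bookkeeping of the outer factors (ending in $T(g)$ rather than $T(g)^{-1}$) is slightly more careful than the paper's displayed computation, which contains a harmless typo at that spot.
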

\begin{proof}
It is enough to see that
\begin{align*}
    W_T(hg,k)V_T(g,h)&=T(khg)^{-1}W(hg,k)T(hg)T(hg)^{-1}V(g,h)T(g)^{-1}\\
    &=T(khg)^{-1}W(hg,k)V(g,h)T(g)^{-1}\\
    &=0,
\end{align*}
and the other composition follows similarly.
\end{proof}

From now on we fix $X=\mathbb{K}^d$ with the Euclidean norm. Thus, we replace $\mathbb{B}_X$ by $\mathcal{M}_d(\mathbb{K})$ and $\mathbb{A}_X$ by $GL_d(\mathbb{K})$.

\begin{proposition}\label{370}
 Every partial cotranslation $W:G\times G\to \mathcal{M}_d(\mathbb{K})$ is conjugated to a partial cotranslation $\widehat{W}:G\times G\to \mathcal{M}_d(\mathbb{K})$ whose projector of the units space is constant and orthogonal, i.e.
   $$\widehat{W}(g,e)=\begin{pmatrix}
\Id_{\rank W}&0\\
0&0
\end{pmatrix},\qquad\forall\,g\in G.$$
where $\Id_{\rank W}$ is the identity on $\mathbb{K}^{\rank W}$.
\end{proposition}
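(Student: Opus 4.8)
The plan is to realize $\widehat{W}$ as a conjugate $W_T$ of $W$ in the sense of Lemma \ref{369}, with the conjugation $T:G\to GL_d(\mathbb{K})$ chosen pointwise so that it carries each unit-space projector to the single standard idempotent $E=\begin{pmatrix}\Id_r&0\\0&0\end{pmatrix}$, where $r:=\rank W$. The first step is to record how the construction of Lemma \ref{369} acts on the projector of the units space. For any $T$, evaluating $W_T(g,h)=T(hg)^{-1}W(g,h)T(g)$ at $h=e$ gives
$$W_T(g,e)=T(g)^{-1}W(g,e)T(g)=T(g)^{-1}\P(g)T(g),$$
with $\P(g)=W(g,e)$. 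Hence it suffices to find, for each $g$, an invertible matrix $T(g)$ conjugating the idempotent $\P(g)$ to $E$.

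This reduces to pure linear algebra. By Lemma \ref{331} each $\P(g)$ is idempotent, and by Proposition \ref{366} (together with Definition \ref{365}) it has rank exactly $r$; the matrix $E$ is likewise idempotent of rank $r$. Two idempotents of equal rank over a field are always similar, and the conjugating matrix can be written down explicitly. Indeed, since $\P(g)$ is a projection we have $\mathbb{K}^d=\im\P(g)\oplus\ker\P(g)$ with $\dim\im\P(g)=r$; choosing an ordered basis $v_1,\dots,v_r$ of $\im\P(g)$ followed by a basis $v_{r+1},\dots,v_d$ of $\ker\P(g)$ yields a basis of $\mathbb{K}^d$, and I let $T(g)$ be the invertible matrix whose columns are these vectors. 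Because $\P(g)$ fixes each $v_i$ with $i\le r$ and annihilates each $v_i$ with $i>r$, it acts on this basis exactly as $E$ acts on the standard basis, so $\P(g)T(g)=T(g)E$, i.e. $T(g)^{-1}\P(g)T(g)=E$.

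Assembling these choices into a single map $T:G\to GL_d(\mathbb{K})$ and setting $\widehat{W}:=W_T$, Lemma \ref{369} guarantees that $\widehat{W}$ is a partial cotranslation (of the same rank), its unit-space projector is $\widehat{W}(g,e)=T(g)^{-1}\P(g)T(g)=E$ for every $g$ as required, and rearranging the definition of $W_T$ into $T(hg)\widehat{W}(g,h)=W(g,h)T(g)$ exhibits $T$ as a conjugation between $W$ and $\widehat{W}$. Note that the target $E$ is the standard orthogonal projection onto the first $r$ coordinates, so the orthogonality asserted in the statement is automatic once $\widehat{W}(g,e)=E$. I anticipate no genuine obstacle here: there is no continuity requirement on the conjugation, so the construction may be carried out independently at each $g$, and the only substantive ingredient is the elementary fact that idempotents of equal rank are simultaneously similar to $E$.
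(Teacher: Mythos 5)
Your proof is correct and follows essentially the same route as the paper's: both invoke Lemma \ref{331} and Proposition \ref{366} to see that each $W(g,e)$ is an idempotent of rank $\rank W$, choose $T(g)\in GL_d(\mathbb{K})$ pointwise conjugating it to the standard idempotent, and apply Lemma \ref{369} to conclude that $\widehat{W}=W_T$ is a partial cotranslation conjugated to $W$ with the desired constant units-space projector. The only difference is that you spell out the elementary similarity argument (columns of $T(g)$ given by bases of $\im\P(g)$ and $\ker\P(g)$) that the paper leaves implicit, which is a harmless elaboration.
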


\begin{proof}
  For every $g\in G$ we know $W(g,e)$ is an idempotent (Lemma \ref{331}) of rank $\rank W$ (Proposition \ref{366}). Then, it follows that there exists some $T(g)\in GL_d(\mathbb{K})$ such that
    $$T(g)^{-1}W(g,e)T(g)=\begin{pmatrix}
\Id_{\rank W}&0\\
0&0
\end{pmatrix},$$
which defines a map $T:G\to GL_d(\mathbb{K})$. Defining $\widehat{W}:G\times G\to \mathbb{B}_X$ by $\widehat{W}=W_T$ as in Lemma \ref{369}, we obtain a partial cotranslation which is by construction conjugated to $W$ and verifies
$$\widehat{W}(g,e)=T(g)^{-1}W(g,e)T(g)=\begin{pmatrix}
\Id_{\rank W}&0\\
0&0
\end{pmatrix}.$$
\end{proof}

Note that the map $T$ we defined on the preceding proposition is not unique. We know eigenvectors (thus diagonalizations) of a continuous matrix function are not in general continuous or bounded. We dedicate the end of this section to deal with this fact.

\begin{proposition}\label{411}
Fix $\P:G\to \mathcal{M}_d(\mathbb{K})$ a projector of constant rank $n$ for which exists $M\geq 1$ such that 
    $$\sup_{g\in G}\left\{\norm{\P(g)},\norm{\Id-\P(g)}\right\}<M,$$
    then, there exists a map $T:G\to GL_d(\mathbb{K})$ such that
    \begin{equation}\label{410}
        T(g)^{-1}\P(g)T(g)=\begin{pmatrix}
\Id_n&0\\
0&0
\end{pmatrix},\qquad\forall\,g\in G,
    \end{equation}
  and
      $$\sup_{g\in G}\left\{\norm{T(g)},\norm{T(g)^{-1}}\right\}<\infty.$$
\end{proposition}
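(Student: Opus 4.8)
The plan is to construct $T(g)$ pointwise, directly from the geometry of $\P(g)$, and then to extract the two uniform bounds; the only delicate point will be the estimate on $\|T(g)^{-1}\|$. For each fixed $g$, the idempotent $\P(g)$ of rank $n$ splits $\mathbb{K}^d=V_g\oplus W_g$, where $V_g=\operatorname{im}\P(g)$ has dimension $n$ and $W_g=\ker\P(g)$ has dimension $d-n$, with $\P(g)$ acting as the identity on $V_g$ and as zero on $W_g$. Since no continuity of $T$ in $g$ is demanded, I would simply choose, for each $g$, an orthonormal basis $v_1^g,\dots,v_n^g$ of $V_g$ and an orthonormal basis $w_1^g,\dots,w_{d-n}^g$ of $W_g$, and let $T(g)$ be the matrix having these vectors as its columns in that order. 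As $V_g$ and $W_g$ are complementary, the columns form a basis of $\mathbb{K}^d$, so $T(g)\in GL_d(\mathbb{K})$; and since $T(g)$ carries $\operatorname{im}P_0=\operatorname{span}(e_1,\dots,e_n)$ onto $V_g$ and $\ker P_0$ onto $W_g$, where $P_0$ denotes the target projector on the right-hand side of \eqref{410}, the identity $\P(g)T(g)=T(g)P_0$ holds, which is exactly the required diagonalization.

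For the upper bound I would write $x=(a,b)\in\mathbb{K}^n\times\mathbb{K}^{d-n}$, so that $T(g)x=p+q$ with $p=\sum_i a_iv_i^g\in V_g$ and $q=\sum_j b_jw_j^g\in W_g$, and $\|p\|=\|a\|$, $\|q\|=\|b\|$ because the columns are orthonormal within each block. Then $\|T(g)x\|\le\|a\|+\|b\|\le\sqrt{2}\,\|x\|$, whence $\|T(g)\|\le\sqrt{2}$ for every $g$, a bound independent of $g$.

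The main obstacle is the uniform lower bound on the smallest singular value of $T(g)$, equivalently the bound on $\|T(g)^{-1}\|$, and this is where the hypothesis must be used. The point is to convert $\|\P(g)\|<M$ into uniform transversality of $V_g$ and $W_g$. Setting $c_g=\sup\{\,|\langle p,q\rangle|:p\in V_g,\,q\in W_g,\,\|p\|=\|q\|=1\,\}$, the cosine of the minimal angle between the two subspaces (the supremum being attained by compactness of the unit spheres in finite dimension), I would pick unit vectors $v\in V_g$, $w\in W_g$ realizing it and adjust the phase of $w$ so that $\langle v,w\rangle=c_g$. Since $\P(g)(v-w)=v$ and $\|v-w\|^2=2(1-c_g)$, one gets $M>\|\P(g)\|\ge \|v\|/\|v-w\|=1/\sqrt{2(1-c_g)}$, i.e. $1-c_g>1/(2M^2)$. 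Finally, using $|\langle p,q\rangle|\le c_g\|p\|\|q\|$ together with the expansion $\|T(g)x\|^2=\|p\|^2+\|q\|^2+2\operatorname{Re}\langle p,q\rangle\ge(1-c_g)(\|p\|^2+\|q\|^2)=(1-c_g)\|x\|^2$, I conclude $\|T(g)^{-1}\|\le(1-c_g)^{-1/2}<\sqrt{2}\,M$ uniformly in $g$. Together with $\|T(g)\|\le\sqrt{2}$ this yields $\sup_{g\in G}\{\|T(g)\|,\|T(g)^{-1}\|\}<\infty$, as required. Everything outside the angle estimate is routine linear algebra performed uniformly; the estimate relating $\|\P(g)\|$ to transversality of $\operatorname{im}\P(g)$ and $\ker\P(g)$ is the crux.
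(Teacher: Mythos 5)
Your construction of $T(g)$ is exactly the paper's: send the canonical basis to an orthonormal basis of $\im\,\P(g)$ followed by an orthonormal basis of $\ker\P(g)$, so that the conjugation identity \eqref{410} is immediate and all the content lies in the two norm bounds. Where you genuinely diverge is in how the crucial bound on $\norm{T(g)^{-1}}$ is obtained. The paper argues coefficientwise: it expands a unit vector $\zeta=\sum_i\beta_i^{\zeta,g}\eta_i^g$ in the adapted basis, applies $\P(g)$ and $\Id-\P(g)$ to isolate the two blocks of coefficients, and uses orthonormality within each block to get $|\beta_i^{\zeta,g}|\leq M$, yielding $\norm{T(g)^{-1}}\leq dM$ (and, via the same crude triangle inequality, $\norm{T(g)}\leq d$); note this uses both halves of the hypothesis. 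You instead prove a uniform transversality estimate: the cosine $c_g$ of the minimal angle between $\im\,\P(g)$ and $\ker\P(g)$ satisfies $1-c_g>1/(2M^2)$, because $\norm{\P(g)}\geq 1/\sqrt{2(1-c_g)}$, and you then bound the smallest singular value of $T(g)$ below by $\sqrt{1-c_g}$. Your inequalities all check out, and this route buys you dimension-free constants ($\norm{T(g)}\leq\sqrt{2}$ and $\norm{T(g)^{-1}}<\sqrt{2}\,M$ versus the paper's $d$ and $dM$), while using only the bound on $\norm{\P(g)}$ and not on $\norm{\Id-\P(g)}$ --- no real loss of generality, since for nontrivial idempotents on a Euclidean space the two norms coincide anyway. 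Two cosmetic points you leave implicit: when $n=0$ or $n=d$ the supremum defining $c_g$ runs over an empty set, so one should set $c_g=0$ there (then $T(g)$ is unitary and the bounds are trivial); and your phase-adjustment and division steps need $c_g<1$, which does hold automatically, since $c_g=1$ would by the equality case of Cauchy--Schwarz produce a unit vector in $\im\,\P(g)\cap\ker\P(g)=\{0\}$. With those remarks added, your proof is correct and somewhat sharper than the paper's.
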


\begin{proof}
    Set $\xi_1,\dots,\xi_d$ the canonical basis of $\mathbb{K}^d$. For each $g\in G$, there is an orthonormal basis of $\im \,\P(g)$ given by $\{\eta_1^g,\dots,\eta_n^g\}$ and an orthonormal basis of $\ker\P(g)$ given by $\{\eta_{n+1}^g,\dots,\eta_d^g\}$. If we define a linear transformation $T(g):\mathbb{K}^d\to \mathbb{K}^d$ by
    $$T(g)\xi_i=\eta_i^g,$$
    evidently it verifies (\ref{410}).  Let $\zeta\in\mathbb{K}^d$ with $\norm{\zeta}=1$. There are unique $\alpha_1^\zeta,\dots,\alpha_d^\zeta\in \mathbb{K}$ with $|\alpha_i^\zeta|\leq 1$ such that
    $$\zeta=\sum_{i=1}^d\alpha_i^\zeta\xi_i,$$
then
\begin{align*}
    \norm{T(g)\zeta}=\norm{T(g)\left(\sum_{i=1}^d\alpha_i^\zeta\xi_i\right)}
    \leq\sum_{i=1}^d|\alpha_i^\zeta|\norm{T(g)\xi_i}
    \leq\sum_{i=1}^d\norm{\eta_i^g}=d,
\end{align*}
hence $\norm{T(g)}\leq d$. On the other hand, there exists unique $\beta_1^{\zeta,g},\dots,\beta_d^{\zeta,g}\in \mathbb{K}$ such that
$$\zeta=\sum_{i=1}^d\beta_i^{\zeta,g}\eta_i^g,$$
hence
$$\P(g)\zeta=\sum_{i=1}^n\beta_i^{\zeta,g}\eta_i^g,\qquad [\Id-\P(g)]\zeta=\sum_{i=n+1}^d\beta_i^{\zeta,g}\eta_i^g.$$

As $\norm{\P(g)}\leq M$, then $\norm{\P(g)\zeta}\leq M$, and as $\{\eta_1^g,\dots,\eta_n^g\}$ is an orthonormal basis, then $|\beta_i^{\zeta,g}|\leq M$ for every $i=1,\dots,n$. Analogously, as $\norm{\Id-\P(g)}\leq M$ and $\{\eta_{n+1}^g,\dots,\eta_d^g\}$ is an orthonormal basis, we obtain  $|\beta_i^{\zeta,g}|\leq M$ for every $i=n+1,\dots,d$. Thus 

\begin{align*}
    \norm{T(g)^{-1}\zeta}=\norm{T(g)^{-1}\left(\sum_{i=1}^d\beta_i^{\zeta,g}\eta_i^g\right)} &\leq\sum_{i=1}^d|\beta_i^{\zeta,g}|\norm{T(g)^{-1}\eta_i^g}\\
    &\leq\sum_{i=1}^dM\cdot\norm{\xi_i}=dM,
\end{align*}
hence $\norm{T(g)^{-1}}\leq dM$.
\end{proof}

The next corollary follows trivially from Propositions \ref{370} and \ref{411}.

\begin{corollary}\label{415}
    If $W:G\times G\to \mathcal{M}_d(\mathbb{K})$ is a partial cotranslation whose projector of the units space and its compliment are uniformly bounded, i.e. there exist $M\geq 1$ such that 
    \begin{equation}\label{414}
        \sup_{g\in G}\left\{\norm{W(g,e)},\norm{\Id-W(g,e)}\right\}<M,
    \end{equation}
then $W$ is boundedly conjugated to a partial cotranslation whose units space projector is constant and orthogonal.
\end{corollary}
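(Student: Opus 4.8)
The plan is to observe that this corollary is simply the composition of Propositions \ref{370} and \ref{411}, the only subtlety being that both are driven by the \emph{same} conjugating map $T$, and that the uniform bound (\ref{414}) is exactly what promotes the conjugation produced in Proposition \ref{370} to a bounded one. So I would not introduce any new construction; instead I would verify that the hypotheses of Proposition \ref{411} are met by the units space projector of $W$, and then read off the conclusion.

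First I would set $\P(g):=W(g,e)$, the projector of the units space of $W$. By Lemma \ref{331} each $\P(g)$ is idempotent, and by Proposition \ref{366} the rank $\rank W(g,e)$ does not depend on $g$; write $n=\rank W$ for this common value. Thus $\P$ is a projector of constant rank $n$, as required by Proposition \ref{411}. Next I would note that hypothesis (\ref{414}) is verbatim the uniform boundedness assumption of Proposition \ref{411} applied to this $\P$, since $\P(g)=W(g,e)$ and $\Id-\P(g)=\Id-W(g,e)$. Proposition \ref{411} then furnishes a map $T:G\to GL_d(\mathbb{K})$ with
$$T(g)^{-1}\P(g)T(g)=\begin{pmatrix}\Id_n&0\\0&0\end{pmatrix},\qquad\forall\,g\in G,$$
and, crucially, with $\sup_{g\in G}\{\norm{T(g)},\norm{T(g)^{-1}}\}<\infty$.

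Finally I would define $\widehat{W}:=W_T$ as in Lemma \ref{369}, that is $\widehat{W}(g,h)=T(hg)^{-1}W(g,h)T(g)$. By that lemma $\widehat{W}$ is a partial cotranslation and the relation $T(hg)\widehat{W}(g,h)=W(g,h)T(g)$ shows that $T$ is a conjugation between $W$ and $\widehat{W}$. Its units space projector is $\widehat{W}(g,e)=T(g)^{-1}W(g,e)T(g)=\left(\begin{smallmatrix}\Id_n&0\\0&0\end{smallmatrix}\right)$, constant and orthogonal, exactly as in Proposition \ref{370}. Since the very same $T$ satisfies the uniform bound above, the conjugation is bounded, which is the assertion. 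There is essentially no obstacle: the entire content lies in Propositions \ref{370} and \ref{411}, and the only point to make explicit is that the diagonalizing map $T$ of Proposition \ref{370} can be chosen bounded with bounded inverse precisely because the extra hypothesis (\ref{414}) lets Proposition \ref{411} control $\norm{T(g)}$ and $\norm{T(g)^{-1}}$ uniformly in $g$.
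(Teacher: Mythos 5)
Your proposal is correct and is exactly the argument the paper intends: the paper states that the corollary ``follows trivially from Propositions \ref{370} and \ref{411}'', and your write-up simply makes that composition explicit --- identifying $\P(g)=W(g,e)$ as a constant-rank projector via Lemma \ref{331} and Proposition \ref{366}, invoking Proposition \ref{411} under hypothesis (\ref{414}) to get a uniformly bounded $T$ with bounded inverse, and conjugating via Lemma \ref{369} as in Proposition \ref{370}. Nothing is missing; your only addition is the (correct and worthwhile) explicit remark that the same $T$ serves both propositions.
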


To continue, we present a conjecture:

\begin{conjecture}\label{412}
   Denote the canonical basis of $\mathbb{K}^d$ by $\{\xi_1,\dots,\xi_d\}$. If $\P:G\to \mathbb{B}_X$ is a continuous projector, then for every $g\in G$ there exists an orthonormal basis of $\im \,\P(g)$ given by $\{\eta_1^g,\dots,\eta_n^g\}$ and an orthonormal basis of $\ker\P(g)$ given by $\{\eta_{n+1}^g,\dots,\eta_d^g\}$ such that defining $T:G\to GL_d(\mathbb{K})$ by
    $$T(g)\xi_i=\eta_i^g,$$
    we obtain that $T$ is continuous.
\end{conjecture}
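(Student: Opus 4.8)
The plan is to recast the conjecture as a statement about the triviality of two vector bundles. For each $g$ the idempotent $\P(g)$ splits $\mathbb{K}^d=\im\,\P(g)\oplus\ker\,\P(g)$ (of constant dimensions $n$ and $d-n$ by continuity of $\P$), and the assignments $g\mapsto\im\,\P(g)$ and $g\mapsto\ker\,\P(g)$ are continuous maps into the Grassmannians $\mathrm{Gr}(n,\mathbb{K}^d)$ and $\mathrm{Gr}(d-n,\mathbb{K}^d)$. They therefore define continuous subbundles $E=\{(g,v):v\in\im\,\P(g)\}$ and $F=\{(g,v):v\in\ker\,\P(g)\}$ of the trivial bundle $G\times\mathbb{K}^d$, with $E\oplus F=G\times\mathbb{K}^d$. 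Producing the $T$ of the conjecture is exactly producing a continuous orthonormal frame $\{\eta_1^g,\dots,\eta_n^g\}$ of $E$ together with a continuous orthonormal frame $\{\eta_{n+1}^g,\dots,\eta_d^g\}$ of $F$, since then $T(g)\xi_i:=\eta_i^g$ is continuous and invertible (the two families are orthonormal bases of complementary subspaces). Thus the conjecture is equivalent to the assertion that both Euclidean/Hermitian bundles $E$ and $F$ are \emph{trivial}.

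The first concrete step is local trivialization. Fix $g_0\in G$ and choose an index set $S\subset\{1,\dots,d\}$ with $|S|=n$ for which $\{\P(g_0)\xi_i:i\in S\}$ is a basis of $\im\,\P(g_0)$. By continuity of $\P$, the vectors $\{\P(g)\xi_i:i\in S\}$ remain linearly independent, hence a basis of $\im\,\P(g)$, for all $g$ in a neighborhood of $g_0$; applying Gram--Schmidt (which depends continuously on the input vectors) yields a continuous \emph{local} orthonormal frame of $E$ near $g_0$. The same argument applied to $\Id-\P$ gives a continuous local orthonormal frame of $F$. This confirms that $E$ and $F$ are genuine (locally trivial) bundles and that the sought $T$ exists locally, which already recovers the bounded but non-continuous construction used in Proposition \ref{411}.

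The remaining and decisive step is globalization, and this is where I expect the real obstacle to lie. Patching the local frames by a partition of unity fails immediately, since convex combinations of orthonormal frames are neither orthonormal nor even invertible; and, more fundamentally, the existence of a global continuous orthonormal frame is equivalent to the triviality of $E$ (resp. $F$), a genuine topological invariant — the homotopy class of the classifying map $G\to\mathrm{Gr}$, detected by characteristic classes and by $\widetilde{K}(G)$. I therefore expect the statement to hold exactly when these bundles are trivial, e.g. whenever $G$ is contractible, in which case every bundle over $G$ is trivial and one simply selects global continuous orthonormal frames and sets $T(g)\xi_i=\eta_i^g$. For a general topological group I expect it to be \emph{false}: a candidate counterexample is $G=\mathbb{R}^2/\mathbb{Z}^2$ with $\mathbb{K}=\mathbb{C}$ and $d=2$, where composing a degree-one map $G\to\mathbb{CP}^1$ with the orthogonal projection $\ell\mapsto P_\ell$ onto the tautological line yields a continuous rank-one projector whose image bundle has $c_1\neq 0$ in $H^2(G;\mathbb{Z})$, hence admits no continuous unit section and no continuous $T$. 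Consequently, the present approach delivers the local statement unconditionally, while a correct global formulation should add a hypothesis ensuring triviality of $E$ and $F$ (contractibility of $G$, or vanishing of the relevant characteristic classes), under which the argument above completes the proof.
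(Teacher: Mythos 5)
The first thing to say is that the paper contains no proof of this statement: it is explicitly labelled a conjecture, and the subsequent results (Corollary \ref{413}, the second half of Theorem \ref{368}) are stated conditionally on it. So there is no ``paper's proof'' to compare against; what can be assessed is whether your analysis is sound, and it is. The reduction to triviality of the image and kernel bundles $E$ and $F$ is exactly right: a continuous $T$ with $T(g)\xi_i=\eta_i^g$ is precisely a pair of global continuous orthonormal frames. Your local argument (pick columns $\{\P(g_0)\xi_i : i\in S\}$ spanning $\im\,\P(g_0)$, note they stay independent nearby, apply Gram--Schmidt) is correct and does recover, locally and continuously, what Proposition \ref{411} achieves globally but without continuity. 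Most importantly, your counterexample is valid and refutes the conjecture as stated: the conjecture quantifies over arbitrary continuous projectors on arbitrary topological groups, and pulling back the tautological projection $\ell\mapsto P_\ell$ along a degree-one map $\mathbb{R}^2/\mathbb{Z}^2\to \mathbb{C}P^1$ gives a continuous rank-one $\P:G\to\mathcal{M}_2(\mathbb{C})$ whose image bundle has $c_1\neq 0$, hence admits no continuous unit section $g\mapsto\eta_1^g$. An even more elementary example in the real case: $G=S^1=\mathbb{R}/\mathbb{Z}$, $d=2$, with $\P(g)$ the orthogonal projection onto the ``half-speed'' line through $(\cos \pi g,\sin \pi g)$; its image is the M\"obius bundle, which has no continuous nowhere-zero section.

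Two small caveats. First, the step ``$G$ contractible $\Rightarrow$ every bundle over $G$ is trivial'' needs paracompactness of the base for the homotopy-invariance argument; this is harmless under the paper's standing locally compact Hausdorff assumptions if one adds, say, second countability, but it should be said. Second, for the paper's actual applications one only needs the conjecture for projectors of the units space of continuous partial cotranslations, i.e., $\P(g)=W(g,e)$, which is a genuinely smaller class: your counterexample refutes the conjecture as written, but does not by itself decide whether a restricted version sufficient for Corollary \ref{413} and Theorem \ref{368} survives (one would have to check whether a projector with topologically nontrivial image bundle can arise as a units-space projector at all). Your proposed repair --- adding a hypothesis that forces $E$ and $F$ to be trivial, e.g.\ contractibility of $G$ (covering the case $G=\mathbb{K}$ of Section 4) or vanishing of the relevant characteristic classes --- is the correct form such a restriction should take.
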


Once again a corollary follows trivially from Proposition \ref{370} and the known fact that inversion of linear transformations is continuous. The second statement in the following is deduced trivially from Corollary \ref{415}.

\begin{corollary}\label{413}
     Suppose Conjecture \ref{412} is true. Then, every continuous partial cotranslation $W:G\times G\to \mathbb{B}_X$ is continuously conjugated to a partial cotranslation whose projector of the units space is constant and orthogonal.

     \smallskip
     Moreover, if the projector of the units space of $W$ and its compliment is uniformly bounded, {\it i.e.} it verifies (\ref{414}), then $W$ is continuously and boundedly conjugated to a partial cotranslation whose projector of the units space is constant and orthogonal.
\end{corollary}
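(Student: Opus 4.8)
The plan is to straighten out the units-space projector $\P(g):=W(g,e)$. By Lemma \ref{331} each $\P(g)$ is idempotent, by Proposition \ref{366} they all share the rank $n=\rank W$, and since $W$ is continuous so is $\P:G\to\mathbb{B}_X$. This is exactly the hypothesis of Conjecture \ref{412}, so (granting the conjecture) there is a \emph{continuous} map $T:G\to GL_d(\mathbb{K})$, built from orthonormal bases $\{\eta_1^g,\dots,\eta_n^g\}$ of $\im\P(g)$ and $\{\eta_{n+1}^g,\dots,\eta_d^g\}$ of $\ker\P(g)$ through $T(g)\xi_i=\eta_i^g$, and this choice satisfies
$$T(g)^{-1}\P(g)T(g)=\begin{pmatrix}\Id_n&0\\0&0\end{pmatrix},\qquad\forall\,g\in G.$$
I would then set $\widehat{W}:=W_T$ as in Lemma \ref{369}, that is $\widehat{W}(g,h)=T(hg)^{-1}W(g,h)T(g)$.

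For the first assertion I must check that this $T$ furnishes a \emph{continuous} conjugation. Lemma \ref{369} already gives that $\widehat{W}=W_T$ is a partial cotranslation, and that it is continuous because $T$ and $W$ are; moreover $T^{-1}$ is continuous since inversion on $GL_d(\mathbb{K})$ is continuous. Its units-space projector is $\widehat{W}(g,e)=T(g)^{-1}W(g,e)T(g)=T(g)^{-1}\P(g)T(g)$, which is the constant orthogonal block form by the displayed identity (this is precisely the conclusion of Proposition \ref{370}). Hence $W$, $\widehat{W}$ and $T$ are all continuous, so $W$ and $\widehat{W}$ are continuously conjugated, with $\widehat{W}$ having constant orthogonal units-space projector.

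For the \emph{moreover} part I would exploit that the $T$ supplied by Conjecture \ref{412} is the \emph{same} orthonormal-frame construction used in Proposition \ref{411}. Consequently, under hypothesis (\ref{414}), namely $\sup_{g}\{\norm{W(g,e)},\norm{\Id-W(g,e)}\}<M$, the norm estimates carried out in the proof of Proposition \ref{411} apply verbatim and yield $\norm{T(g)}\leq d$ and $\norm{T(g)^{-1}}\leq dM$ for every $g\in G$. Thus the very conjugation just obtained is at once continuous (by the conjecture) and bounded (by these estimates), which is exactly a continuous and bounded conjugation; this combines the continuity from the conjecture with the bounded conclusion of Corollary \ref{415}.

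The main obstacle is entirely absorbed into Conjecture \ref{412}: producing a globally continuous choice of orthonormal frame diagonalizing a continuous projector of constant rank is the genuinely delicate step, since continuous eigenvector selection can fail in general, and it is precisely what we assume. Once that continuous $T$ is in hand, the remainder is a routine assembly, namely continuity of matrix multiplication and inversion for the first claim, and the choice-independent bound of Proposition \ref{411} (valid for any orthonormal frame) for the second, so no computation beyond quoting those earlier results is required.
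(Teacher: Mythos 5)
Your proposal is correct and follows the same route the paper takes: the first claim is Proposition \ref{370} combined with Conjecture \ref{412} and the continuity of matrix inversion, and the second claim is Corollary \ref{415} applied to the conjecture's $T$. Your explicit observation that the conjecture's $T$ is built from orthonormal frames, so the choice-independent estimates $\norm{T(g)}\leq d$ and $\norm{T(g)^{-1}}\leq dM$ from Proposition \ref{411} apply to it verbatim, is exactly the glue the paper leaves implicit when it says the statement follows ``trivially.''
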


\begin{remark}
    {\rm
    The preceding corollary, if Conjecture \ref{412} is true, presents a non-commutative and non-differentiable extension of S. Siegund's reducibility result for nonautonomous differential equations \cite[Theorem 3.2]{Siegmund2}. The key distinction lies in the fact that, in \cite{Siegmund2}, all invariant projectors are derived from a dichotomy.

\smallskip
Furthermore, it's noteworthy that when two partial cotranslations are boundedly and continuously conjugated, we are essentially describing a generalized notion of 'kinematic similarity' (see, for instance, \cite[Definition 2.1]{Siegmund2}), irrespective of whether the involved group is commutative or possesses a differential structure.

\smallskip
Remarkably, for discrete groups, the generalization holds trivially, as continuity is not a concern.
}
\end{remark}

To conclude, we state a theorem and consequent corollary to summarize the results of this section.

\begin{theorem}\label{368}
   Every partial cotranslation is completable to a cotranslation, i.e. for every partial cotranslation  $W:G\rimes G\to \mathcal{M}_d(\mathbb{K})$ there exists a partial cotranslation $V:G\times G\to \mathcal{M}_d(\mathbb{K})$ such that $W$ and $V$ are mutually orthogonal and $W+V$ has rank $d$. 

   \smallskip
  Moreover, if Conjecture \ref{412} is true and $W$ is continuous, then $V$ can also be chosen continuous.
\end{theorem}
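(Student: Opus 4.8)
The plan is to reduce the problem to the normalized situation provided by Proposition~\ref{370}, where a completion is completely transparent, and then to transport the construction back by conjugation using Lemmas~\ref{369} and~\ref{371}. The guiding idea is that once the units-space projector of $W$ is made constant and orthogonal, the obvious completion is simply to fill in the complementary block with the identity.

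First I would invoke Proposition~\ref{370} to obtain a map $T:G\to GL_d(\mathbb{K})$ such that $\widehat{W}:=W_T$, given by $\widehat{W}(g,h)=T(hg)^{-1}W(g,h)T(g)$, has constant orthogonal units-space projector $\P_0:=\begin{pmatrix}\Id_{\rank W}&0\\0&0\end{pmatrix}$. Setting $k=e$ and then $h=e$ in the partial-cotranslation identity yields $\widehat{W}(g,h)=\widehat{W}(hg,e)\,\widehat{W}(g,h)=\P_0\,\widehat{W}(g,h)$ and $\widehat{W}(g,h)=\widehat{W}(g,h)\,\widehat{W}(g,e)=\widehat{W}(g,h)\,\P_0$, so $\widehat{W}(g,h)=\P_0\,\widehat{W}(g,h)\,\P_0$; in block form $\widehat{W}(g,h)=\begin{pmatrix}B(g,h)&0\\0&0\end{pmatrix}$. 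Since conjugation preserves rank (Lemma~\ref{369}) and $\rank W$ is well defined (Proposition~\ref{366} and Definition~\ref{365}), the block $B(g,h)$ is an invertible $\rank W\times\rank W$ matrix for every $g,h$.

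Next I would complete $\widehat{W}$ by the constant map $\widehat{V}(g,h):=\Id-\P_0$. This is a partial cotranslation because $\Id-\P_0$ is idempotent, it is mutually orthogonal to $\widehat{W}$ because the relations $\widehat{W}(g,h)\P_0=\widehat{W}(g,h)=\P_0\widehat{W}(g,h)$ force both $\widehat{W}(hg,k)(\Id-\P_0)$ and $(\Id-\P_0)\widehat{W}(g,h)$ to vanish, and
$$\widehat{W}(g,h)+\widehat{V}(g,h)=\begin{pmatrix}B(g,h)&0\\0&\Id_{d-\rank W}\end{pmatrix}$$
is invertible, hence of rank $d$. Finally, writing $S(g):=T(g)^{-1}$ and setting $V:=\widehat{V}_{S}$, i.e. $V(g,h)=T(hg)(\Id-\P_0)T(g)^{-1}$, a direct computation gives $\widehat{W}_{S}=W$; thus by Lemma~\ref{369} the map $V$ is a partial cotranslation, by Lemma~\ref{371} the pair $W=\widehat{W}_S$ and $V=\widehat{V}_S$ is mutually orthogonal, and $W+V=(\widehat{W}+\widehat{V})_S=S(hg)^{-1}(\widehat{W}+\widehat{V})(g,h)S(g)$ is a product of invertible matrices, hence of rank $d$.

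For the continuity statement I would replace Proposition~\ref{370} by Corollary~\ref{413}: assuming Conjecture~\ref{412} and that $W$ is continuous, the normalizing conjugation $T$ may be taken continuous. Then $S=T^{-1}$ is continuous since matrix inversion is continuous, and $\widehat{V}$ is constant, so $V=\widehat{V}_S$ is continuous by the last statement of Lemma~\ref{369}. I expect the only genuine point to be the verification that $B(g,h)$ is invertible, which is precisely where the rank invariance of Proposition~\ref{366} and the image/kernel constraints coming from the units-space projector enter; everything else is a formal manipulation of the conjugation operation $(\cdot)_S$, and the entire difficulty of the continuous refinement is already isolated in Conjecture~\ref{412}.
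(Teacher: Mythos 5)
Your proposal is correct and follows essentially the same route as the paper's proof: normalize $W$ to $\widehat{W}$ via Proposition~\ref{370}, complete with the constant partial cotranslation $\Id-\P_0$, transport back through the inverse conjugation using Lemmas~\ref{369} and~\ref{371} (with Lemma~\ref{372} implicitly giving that $W+V$ is a partial cotranslation), and obtain the continuous refinement from Corollary~\ref{413}. The only difference is cosmetic: you establish the block identity $\widehat{W}(g,h)=\P_0\,\widehat{W}(g,h)\,\P_0$ and the invertibility of the block $B(g,h)$, yielding pointwise invertibility of $\widehat{W}+\widehat{V}$, where the paper instead evaluates the rank of the sum at the units $(g,e)$ and appeals to the well-definedness of rank from Proposition~\ref{366}.
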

\begin{proof}
Define the constant partial cotranslation $\widehat{V}:G\times G\to \mathcal{M}_d(\mathbb{K})$ by
$$\widehat{V}(g,h)=\begin{pmatrix}
0&0\\
0&\Id_{d-\rank W}
\end{pmatrix}.$$

We affirm $\widehat{V}$ is mutually orthogonal to $\widehat{W}$ (defined as in Proposition \ref{370}). Indeed:
\begin{align*}
\widehat{W}(hg,k)\widehat{V}(g,h)&=\widehat{W}(hg,k)\widehat{W}(hg,e)\widehat{V}(g,h)=\widehat{W}(hg,k)\begin{pmatrix}
\Id_{\rank W}&0\\
0&0
\end{pmatrix}\begin{pmatrix}
0&0\\
0&\Id_{d-\rank W}
\end{pmatrix}=0,
\end{align*}
and the other composition follows analogously. Choose $T$ as in Proposition \ref{370}. Denoting $T^{\inv}:G\to\mathbb{A}_X$ by $T^\inv(g)=T(g)^{-1}$, it follows by Lemma \ref{371} that $W=\widehat{W}_{T^\inv}$ and $V:=\widehat{V}_{T^{\inv}}$ are mutually orthogonal. Hence, by Lemma \ref{372}, $W+V$ is also a partial cotranslation. Moreover, for an arbitrary $g\in G$:
$$\rank \left(\widehat{W}+\widehat{V}\right)=\rank \left(\widehat{W}(g,e)+\widehat{V}(g,e)\right)=\rank\left(\begin{pmatrix}
\Id_{\rank W}&0\\
0&0
\end{pmatrix}+\begin{pmatrix}
0&0\\
0&\Id_{d-\rank W}
\end{pmatrix}\right)=d,$$
thus, by Lemma \ref{369} we have
$$\rank(W+V)=\rank \left(\widehat{W}+\widehat{V}\right)_{T^\inv}=\rank\left(\widehat{W}+\widehat{V}\right)=d.$$

The second statement follows trivially from Corollary \ref{413}.
\end{proof}

\begin{corollary}
    Every partial cotranslation is a cotranslation multiplied with an invariant projector.
\end{corollary}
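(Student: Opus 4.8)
The plan is to read the statement off directly from Theorem \ref{368} together with Lemma \ref{367}. Given a partial cotranslation $W:G\rimes G\to\mathcal{M}_d(\mathbb{K})$, Theorem \ref{368} furnishes a partial cotranslation $V:G\rimes G\to\mathcal{M}_d(\mathbb{K})$ that is mutually orthogonal to $W$ and such that $Z:=W+V$ has rank $d$. The first thing I would observe is that a full-rank partial cotranslation is automatically a cotranslation: since $\rank Z(g,h)=d$ for every $g,h\in G$, each $Z(g,h)$ is invertible, hence lies in $GL_d(\mathbb{K})\cong\mathbb{A}_X$. As $Z$ satisfies the defining identity $Z(g,kh)=Z(hg,k)Z(g,h)$ and takes values in $\mathbb{A}_X$, it is precisely a groupoid morphism into $\mathbb{A}_X$, that is, a cotranslation.

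Next I would produce the invariant projector. Take $\P:G\to\mathcal{M}_d(\mathbb{K})$ to be the projector of the units space of $W$, namely $\P(g)=W(g,e)$. By Lemma \ref{367}, since $W$ and $V$ are mutually orthogonal, $\P$ is an invariant projector for $W+V=Z$. The same lemma records that reconstructing a partial cotranslation from $Z$ and $\P$ via the recipe of Proposition \ref{172} returns $W$ itself; concretely, $Z(g,h)\P(g)=\big(W(g,h)+V(g,h)\big)W(g,e)=W(g,h)$, where the vanishing of the cross term $V(g,h)W(g,e)$ is exactly the mutual orthogonality specialized at $h=e$.

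Combining the two steps yields $W(g,h)=Z(g,h)\P(g)$ for all $g,h\in G$, with $Z$ a cotranslation and $\P$ an invariant projector associated to $Z$, which is precisely the assertion that $W$ is a cotranslation multiplied with an invariant projector. There is no genuine obstacle here: all the substance has already been packaged into Theorem \ref{368} and Lemma \ref{367}, and the only point that deserves a moment's attention is the identification of a rank-$d$ partial cotranslation with an honest cotranslation, which rests solely on the invertibility of its values.
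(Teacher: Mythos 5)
Your proof is correct and takes essentially the same route as the paper, whose own proof simply cites Theorem \ref{368} and Lemma \ref{367}; you merely spell out the details the paper leaves implicit, in particular that a rank-$d$ partial cotranslation takes values in $GL_d(\mathbb{K})\cong\mathbb{A}_X$ and hence is an honest cotranslation, and that $W=(W+V)\,\P$ with $\P(g)=W(g,e)$ by the mutual orthogonality. No changes needed.
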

\begin{proof}
The statement follows trivially from Theorem \ref{368} and Lemma \ref{367}.
\end{proof}

\section{Statements and Declarations}

There is no competing interest related to this project. This research has been partially supported by ANID, Beca de Doctorado Nacional 21220105

\end{document}